\newcommand{\scr}[1]{\EuScript{#1}}
\newcommand{\ip}[1]{\langle #1 \rangle}
\newcommand{\Hom}{\mathop{\mathrm{Hom}}\nolimits}
\newcommand{\End}{\mathop{\mathrm{End}}\nolimits}
\newcommand{\Spf}{\mathop{\mathrm{Spf}}\nolimits}
\newcommand{\Sp}{\mathop{\mathrm{Sp}}\nolimits}
\newcommand{\Spec}{\mathop{\mathrm{Spec}}\nolimits}
\newcommand{\SL}{\mathop{\mathrm{SL}}\nolimits}
\newcommand{\ord}{\mathop{\mathrm{ord}}\nolimits}
\renewcommand{\div}{\mathrm{div}}
\def\Q{\mathbb{Q}}
\def\Z{\mathbb{Z}}
\def\C{\mathbb{C}}
\def\F{\mathbb{F}}
\def\A{\mathbb{A}}
\def\OO{\EuScript{O}}
\def\Tate{\underline{\mathrm{Tate}}}
\def\an{\mathrm{an}}
\newcommand{\E}{{\mathbf{E}}}
\renewcommand{\H}{{\mathbf{H}}}
\newcommand{\Fr}{{\mathrm{Fr}}}
\newcommand{\rig}{{\mathrm{rig}}}
\newcommand{\spe}{{\mathrm{sp}}}
\newcommand{\q}{{\mathbf{q}}}
\newcommand{\m}{{\mathbf{m}}}
\author{Nick Ramsey} \address{Department of Mathematics, 
  University of Michigan}
\email{naramsey@umich.edu} 
\title{The half-integral weight eigencurve} 
\thanks{This research is supported in part by NSF Grant DMS-0503264}
\begin{document}
\frontmatter

\begin{abstract}
  In this paper we define Banach spaces of overconvergent
  half-integral weight $p$-adic modular forms and Banach modules of
  families of overconvergent half-integral weight $p$-adic modular
  forms over admissible open subsets of weight space.  Both spaces are
  equipped with a continuous Hecke action for which $U_{p^2}$ is
  moreover compact.  The modules of families of forms are used to
  construct an eigencurve parameterizing all finite-slope systems of
  eigenvalues of Hecke operators acting on these spaces.  We also
  prove an analog of Coleman's theorem stating that overconvergent
  eigenforms of suitably low slope are classical.
\end{abstract}

\maketitle

\tableofcontents

\theoremstyle{plain}
\newtheorem{conv}[smfthm]{Convention}

\theoremstyle{remark}
\newtheorem{rems}[smfthm]{Remarks}

\section{Introduction}

In \cite{mfhi}, the author set up a geometric theory of modular forms
of weight $k/2$ for odd positive integers $k$, complete with
geometrically defined Hecke operators.  This approach naturally led
to a theory of overconvergent $p$-adic modular forms of such weights
equipped with a Hecke action for which $U_{p^2}$ is compact.

In this paper we define overconvergent half-integral weight $p$-adic
modular forms of general $p$-adic weights, as well as rigid-analytic
families thereof over admissible open subsets of weight space.  We use
the latter spaces and Buzzard's eigenvariety machine
(\cite{buzzardeigenvarieties}) to construct a half-integral weight
eigencurve parameterizing all systems of eigenvalues of Hecke
operators occurring on spaces of half-integral weight overconvergent
eigenforms of finite slope.  In contrast to the integral weight
situation, this space \emph{does not} parameterize actual forms
because a half-integral weight form that is an eigenform for all Hecke
operators is not always characterized by its weight and collection of
Hecke eigenvalues.  We also prove an analog of Coleman's result that
overconvergent eigenforms of suitably low slope are classical.

This paper lays the foundation for a forthcoming paper of the author
in which we construct a map from our half-integral weight eigencurve
to its integral weight counterpart (at least after passage to the
underlying reduced spaces) that rigid-analytically interpolates the
classical Shimura lifting introduced in \cite{shimura}.\\

\noindent {\bf Acknowledgments}\\

The author extends his thanks to Brian Conrad for writing the
appendix, as well as for numerous helpful discussions and suggestions
about the technical issues in Section \ref{sec:prelims}.  The author
would also like to thank the referee for several suggestions on the
manuscript and for directing him to some good references to help deal
with the case $p=2$.

\section{Preliminaries}\label{sec:prelims}

\subsection{General Notation}

Fix a prime number $p$. The symbol $K$ will always denote a complete
and discretely-valued field extension of $\Q_p$.  For such $K$ we
denote the ring of integers by $\OO_K$ and the maximal ideal therein by
$m_K$.  The absolute value on $K$ will always be normalized by $|p|=
1/p$.

\subsection{Modular Curves}\label{sec:modcurves}

For positive integers $N$ and $n$, $X_1(N)$ and $X_1(N,n)$ will denote
the usual moduli stacks of generalized elliptic curves with level
structure.  The former classifies generalized elliptic curves with a
point $P$ of order $N$ while the latter classifies generalized
elliptic curves with a pair $(P,C)$ consisting of a point $P$ of order
$N$ and a cyclic subgroup $C$ of order $n$ meeting the subgroup
generated by $P$ trivially (plus a certain ampleness condition for
non-smooth curves). This level structure will always be taken to be
the Drinfeld-style level structure found in \cite{katzmazur},
\cite{conradamgec}, and the appendix to this paper, and in all cases
the base ring will be a $\Z_{(p)}$-algebra.

Throughout this paper we will make extensive use of certain admissible
opens in rigid spaces associated to some of these modular curves.
Traditionally these opens were defined using the Eisenstein series
$E_{p-1}$, but this requires that we pose unfavorable restrictions on
$p$ and $N$.  Fortunately, more recent papers of Buzzard
(\cite{buzzard}) and Goren-Kassaei (\cite{gorenkassaei}) define these
opens and explore their properties in greater generality using
alternative techniques.
These authors define a ``measure of singularity'' $v(E)\in \Q^{\geq
  0}$ associated to an elliptic curve over a complete extension of
$\Q_p$.  In case $v(E)\leq p/(p+1)$, one may associate to $E$ a
canonical subgroup $H_p(E)$ of order $p$ in an appropriately
functorial manner.  Moreover, one understands $v(E/C)$ for finite
cyclic subgroups $C\subseteq E$ as well as the canonical subgroup of
$E/C$ when it exists.  Inductively applying this with $C=H_p(E)$, one
can define (upon further restricting $v(E)$) canonical subgroups
$H_{p^m}(E)$ of higher $p$-power order.  For details regarding these
constructions and facts, we refer the reader to Section 3 of
\cite{buzzard} and Section 4 of \cite{gorenkassaei}.

We will denote the Tate elliptic curve over $\Z(\!(q)\!)$ by
$\Tate(q)$ (see \cite{katz}).  Our notational conventions concerning
the Tate curve differ from those often found in the literature as
follows.  In the presence of, for example, level $N$ structure,
previous authors (e.g.,\ \cite{katz}) have preferred to consider the
curve $\Tate(q^N)$ over the base $\Z(\!(q)\!)$.  Points of order $N$
on this curve are used to characterize the behavior of a modular form
at the cusps, and are all defined over the fixed ring
$\Z(\!(q)\!)[\zeta_N]$ (where $\zeta_N$ is some primitive
$N^{\small\mathrm{th}}$ root of $1$).  We prefer to fix the curve
$\Tate(q)$ and instead consider \emph{extensions} of the base.  Thus,
in the presence of level $N$ structure, we introduce the formal
variable $q_N$, and \emph{define} $q=q_N^N$.  Then the curve
$\Tate(q)$ is defined over the sub-ring $\Z(\!(q)\!)$ of
$\Z(\!(q_N)\!)$ and all of its $N$-torsion is defined over the ring
$\Z(\!(q_N)\!)[\zeta_N]$.  To be precise, the $N$-torsion is given by
$$\zeta_N^iq_N^j,\ \ 0\leq i,j\leq N-1.$$
Cusps will always be
referred to by specifying a level structure on the Tate curve.

Suppose that $N\geq 5$ so that we have a fine moduli scheme
$X_1(N)_{\Q_p}$ and let $K/\Q_p$ be a finite extension (which will
generally be fixed in applications).  If $r\in [0,1]\cap \Q$, then the
region in the rigid space $X_1(N)^\an_K$ whose points correspond to
pairs $(E,P)$ with $v(E)\leq r$ is an admissible affinoid open.  We
denote by $X_1(N)^\an_{\geq p^{-r}}$ the connected component of this
region that contains the cusp associated to the datum
$(\Tate(q),\zeta_N)$ for some (equivalently, any) choice of primitive
$N^{\small\mathrm{th}}$ root of unity $\zeta_N$.  Similarly,
$X_1(N,n)^\an_{\geq p^{-r}}$ will denote the connected component of
the region defined by $v(E)\leq r$ in $X_1(N,n)^\an_K$ containing the
cusp associated to $(\Tate(q),\zeta_N,\ip{q_n})$ for any such
$\zeta_N$.  For smaller $N$ one defines these spaces by first adding
prime-to-$p$ level structure to rigidify the moduli problem and
proceeding as above, and then taking invariants.  Similarly, the space
$X_0(N)^\an_{\geq p^{-r}}$ is defined as the quotient of
$X_1(N)^\an_{\geq p^{-r}}$ by the action of the diamond operators.
The reader may wish to consult Section 6 of
\cite{buzzardeigenvarieties} for a more detailed discussion of these
quotients.

\subsection{Norms}\label{norms}

If $\mathfrak{X}$ is an admissible formal scheme over $\OO_K$ (in the
sense of \cite{formalrigid}), we will denote its (Raynaud) generic
fiber by $\mathfrak{X}_\rig$ and its special fiber by
$\mathfrak{X}_0$.  In case $\mathfrak{X} = \Spf(\scr{A})$ is a formal
affine we have $\mathfrak{X}_\rig = \Sp(\scr{A}\otimes_{\OO_K}K)$ and
$\mathfrak{X}_0 = \Spec(\scr{A}/\pi\scr{A})$ where $\pi\in \OO_K$ is a
uniformizer.  We recall for later use that the natural specialization
map
$$\spe:\mathfrak{X}_\rig \longrightarrow \mathfrak{X}_0$$
is
surjective on the level of closed points (see Proposition 3.5 of
\cite{formalrigid}).

Assume that $\mathfrak{X}$ is reduced and let $\mathfrak{L}$ be an
invertible sheaf on $\mathfrak{X}$ (that is to say, a sheaf of modules
on this ringed space that is Zariski-locally free of rank one).  For a
point $x\in \mathfrak{X}_\rig(L)$ let
$$\widehat{x}:\Spf(\OO_L)\longrightarrow \mathfrak{X}$$
denote the
unique extension of $x$ to the formal model.  Then the canonical
identification
$$H^0(\Sp(L),x^*\mathfrak{L}_\rig) =
H^0(\Spf(\OO_L),\widehat{x}^*\mathfrak{L})\otimes_{\OO_L}L$$
furnishes a
norm $|\!\cdot\!|_x$ on this one-dimensional vector space by declaring
the formal sections on the right to be the unit ball.  Now for any
admissible open $\scr{U}\subseteq\mathfrak{X}_\rig$ and any $f\in
H^0(\scr{U},\mathfrak{L}_\rig)$ we define $$\|f\|_{\scr{U}} =
\sup_{x\in\scr{U}}|x^*f|_x.$$
Note that, in case, $\mathfrak{L} =
\OO_{\mathfrak{X}}$, this is simply the usual supremum norm on
functions.

There is no reason for $\|f\|_{\scr{U}}$ to be finite in general, but
in case $\scr{U}$ is affinoid then this is indeed finite and endows
$H^0(\scr{U},\mathfrak{L}_\rig)$ with the structure of a Banach space
over $K$ as we now demonstrate.
\begin{lemm}\label{banachspaces}
  Let $\mathfrak{X}$ be a reduced quasi-compact admissible formal
  scheme over $\OO_K$, let $\mathfrak{L}$ be an invertible sheaf on
  $\mathfrak{X}$, and let $\scr{U}$ be an admissible affinoid open in
  $\mathfrak{X}_\rig$.  Then $H^0(\scr{U},\mathfrak{L}_\rig)$ is a
  $K$-Banach space with respect to $\|\!\cdot\!\|_{\scr{U}}$ .
\end{lemm}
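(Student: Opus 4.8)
The plan is to reduce the assertion to the familiar case $\mathfrak{L}=\OO_{\mathfrak{X}}$ by trivializing $\mathfrak{L}$ over a suitable cover and then realizing $(H^0(\scr{U},\mathfrak{L}_\rig),\|\cdot\|_{\scr{U}})$ as a closed subspace of a finite direct sum of reduced affinoid algebras equipped with their supremum norms. First I would cover $\mathfrak{X}$ by finitely many affine formal opens $\mathfrak{U}_\alpha$ — possible since $\mathfrak{X}$ is quasi-compact — over each of which $\mathfrak{L}$ is free, say with generator $\epsilon_\alpha\in H^0(\mathfrak{U}_\alpha,\mathfrak{L})$. The generic fibres $(\mathfrak{U}_\alpha)_\rig$ form a finite admissible cover of $\mathfrak{X}_\rig$, so their intersections with $\scr{U}$ admissibly cover $\scr{U}$; since $\scr{U}$ is quasi-compact I may refine this to a finite cover $\scr{U}=\bigcup_i\scr{V}_i$ by affinoid subdomains with each $\scr{V}_i\subseteq(\mathfrak{U}_{\alpha(i)})_\rig$, and I let $e_i\in H^0(\scr{V}_i,\mathfrak{L}_\rig)$ be the image of $\epsilon_{\alpha(i)}$, a free generator of $\mathfrak{L}_\rig$ over $\scr{V}_i$. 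Because $\mathfrak{X}$ is reduced, its Raynaud generic fibre $\mathfrak{X}_\rig$ is reduced, hence so is each $\scr{V}_i$ and each overlap $\scr{V}_{ij}:=\scr{V}_i\cap\scr{V}_j$; in particular the supremum norm is a genuine $K$-algebra norm on $\OO(\scr{V}_i)$ and on $\OO(\scr{V}_{ij})$, making each of these a $K$-Banach algebra.

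The key point is a local identity: for $x\in\scr{V}_i(L)$ and $f\in H^0(\scr{U},\mathfrak{L}_\rig)$, writing $f|_{\scr{V}_i}=g_ie_i$ with $g_i\in\OO(\scr{V}_i)$, one has $|x^*f|_x=|g_i(x)|$. Indeed the canonical extension $\widehat{x}\colon\Spf(\OO_L)\to\mathfrak{X}$ factors through $\mathfrak{U}_{\alpha(i)}$, since its closed point specializes into the open subscheme $(\mathfrak{U}_{\alpha(i)})_0$; therefore $\widehat{x}^*\epsilon_{\alpha(i)}$ is an $\OO_L$-basis of $\widehat{x}^*\mathfrak{L}$, i.e.\ $|x^*e_i|_x=1$, whence $|x^*f|_x=|g_i(x)|\cdot|x^*e_i|_x=|g_i(x)|$. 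Summing over the cover this yields
$$\|f\|_{\scr{U}}=\sup_{x\in\scr{U}}|x^*f|_x=\max_i\,\sup_{x\in\scr{V}_i}|g_i(x)|=\max_i\|g_i\|_{\scr{V}_i},$$
which is finite because each $\scr{V}_i$ is affinoid, and vanishes only when every $g_i$ does, i.e.\ only when $f=0$; so $\|\cdot\|_{\scr{U}}$ is a norm. Moreover the map $\iota\colon H^0(\scr{U},\mathfrak{L}_\rig)\to\bigoplus_i\OO(\scr{V}_i)$, $f\mapsto(g_i)_i$, is injective by the sheaf axiom for $\mathfrak{L}_\rig$ on the admissible cover $\{\scr{V}_i\}$, and the displayed formula says precisely that $\iota$ is an isometry onto its image when the target carries the maximum of the supremum norms.

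It then remains to check that the image of $\iota$ is closed in the $K$-Banach space $\bigoplus_i\OO(\scr{V}_i)$. Let $u_{ij}\in\OO(\scr{V}_{ij})^\times$ be the transition units with $e_i=u_{ij}e_j$ on $\scr{V}_{ij}$. Unwinding the gluing condition for sections of $\mathfrak{L}_\rig$ identifies the image of $\iota$ with the kernel of the alternating \v{C}ech map
$$\delta\colon\bigoplus_i\OO(\scr{V}_i)\longrightarrow\bigoplus_{i<j}\OO(\scr{V}_{ij}),\qquad (g_i)_i\longmapsto\bigl(u_{ij}\,g_i|_{\scr{V}_{ij}}-g_j|_{\scr{V}_{ij}}\bigr)_{i<j}.$$
Each component of $\delta$ is continuous for the supremum norms — restriction maps between affinoids are contractions for the supremum norm, and multiplication by the fixed element $u_{ij}$ is bounded by $\|u_{ij}\|_{\scr{V}_{ij}}<\infty$ — so $\ker\delta$ is closed. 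A closed subspace of a $K$-Banach space is a $K$-Banach space, and transporting the norm back along the isometry $\iota$ shows that $(H^0(\scr{U},\mathfrak{L}_\rig),\|\cdot\|_{\scr{U}})$ is a $K$-Banach space.

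The functional-analytic part of this is routine; the step demanding care, and the one really using the formal-geometric hypotheses, is producing the affinoid cover $\{\scr{V}_i\}$ together with trivializing sections $e_i$ descending from formal generators of $\mathfrak{L}$, as this is what forces $|x^*e_i|_x=1$ and hence the clean formula $\|f\|_{\scr{U}}=\max_i\|g_i\|_{\scr{V}_i}$. The argument also rests on two standard facts that I would simply cite: the Raynaud generic fibre of a reduced admissible formal scheme is reduced, and a reduced $K$-affinoid algebra is complete for, hence a $K$-Banach algebra under, its supremum norm.
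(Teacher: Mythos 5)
Your proof is correct, and it takes a genuinely different (in fact somewhat cleaner) route than the paper's. The paper begins by invoking Raynaud's theorem to produce a quasi-compact admissible formal blowup $\mathfrak{X}'\to\mathfrak{X}$ together with a formal open $\mathfrak{U}\subseteq\mathfrak{X}'$ whose generic fiber is exactly $\scr{U}$, checks that the norm is insensitive to replacing $(\mathfrak{X},\mathfrak{L})$ by $(\mathfrak{X}',\pi^*\mathfrak{L})$, and thereby reduces to the case $\scr{U}=\mathfrak{X}_\rig$; only then does it trivialize $\mathfrak{L}$ on a finite affine cover and write $\|f\|_{\scr{U}}=\max_i\|a_i\|_{\sup}$, with the Banach-space assertion left as following ``easily'' from the reduced-affinoid case. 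You sidestep Raynaud's theorem entirely: since the Raynaud generic fibers $(\mathfrak{U}_\alpha)_\rig$ of a finite trivializing formal affine cover already admissibly cover $\mathfrak{X}_\rig$, one can simply refine $\{\scr{U}\cap(\mathfrak{U}_\alpha)_\rig\}$ to a finite cover of $\scr{U}$ by affinoid subdomains $\scr{V}_i$, and the observation that $\widehat{x}$ automatically factors through $\mathfrak{U}_{\alpha(i)}$ for $x\in\scr{V}_i$ (because $\spe(x)\in(\mathfrak{U}_{\alpha(i)})_0$) gives the same local identity $|x^*f|_x=|g_i(x)|$. You also make explicit the completeness argument the paper elides, by exhibiting the section space as the kernel of the continuous \v{C}ech map into $\bigoplus_i\OO(\scr{V}_i)$, isometrically via $\|f\|_{\scr{U}}=\max_i\|g_i\|_{\scr{V}_i}$. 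What the paper's reduction buys is that afterwards $\scr{U}$ itself has a formal model, which keeps the computation visibly on the formal side; what yours buys is independence from Raynaud's theorem and a self-contained completeness argument, at the modest cost of one extra refinement of covers.
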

\begin{proof}
  By Raynaud's theorem there is quasi-compact admissible formal blowup
  $\pi:\mathfrak{X}'\longrightarrow \mathfrak{X}$ and an admissible
  formal open $\mathfrak{U}$ in $\mathfrak{X}'$ with generic fiber
  $\scr{U}$.  For $x\in\scr{U}$ let $\widehat{x}'$ denote the unique
  extension to an $\OO_L$-valued point of $\mathfrak{U}$ and let
  $\widehat{x}$ denote its image in $\mathfrak{X}$ (which is the same
  $\widehat{x}$ as above by uniqueness).  Then we have
  $$H^0(\Spf(\OO_L), \widehat{x}'^*\pi^*\mathfrak{L}) =
  H^0(\Spf(\OO_L),\widehat{x}^*\mathfrak{L})$$
  as lattices in
  $H^0(\Sp(L),\mathfrak{L}_\rig)$.  It follows that $|f|_x =
  |\pi^*f|_x$ and we may compute $\|f\|_{\scr{U}}$ using the models
    $\mathfrak{X}'$ and $\pi^*\mathfrak{L}$, and hence we may as well
    assume that $\scr{U}$ is the generic fiber of an admissible formal
    open $\mathfrak{U}$ in $\mathfrak{X}$.  Furthermore, we may just
    well replace $\mathfrak{X}$ by $\mathfrak{U}$ and assume that
    $\mathfrak{U}$ is the generic fiber of $\mathfrak{X}$ itself.
  
  Cover $\mathfrak{X}$ by a finite collection of admissible formal
  affine opens $\mathfrak{U}_i$ trivializing $\mathfrak{L}$ and pick a
  trivializing section $\ell_i$ of $\mathfrak{L}$ on $\mathfrak{U}_i$.
  Let $\scr{U}_i= (\mathfrak{U}_i)_\rig$, so that the $\scr{U}_i$ form
  an admissible cover of $\scr{U}$ by admissible affinoid opens.
  Then, for any section $f\in H^0(\scr{U},\mathfrak{L}_\rig)$, we may
  write $f|_{\scr{U}_i} = a_i\ell_i$ for a unique $a_i\in
  \OO(\scr{U}_i)$, and one easily checks that $$\|f\|_{\scr{U}} =
  \max_i\|a_i\|_{\sup}.$$
  The desired assertion now follows easily
  from the analogous assertion about the supremum norm on a reduced
  affinoid.
\end{proof}

The following lemma and its corollary establish a sort of maximum
modulus principle for these norms.
\begin{lemm}\label{supmaxprinc}
  Let $\mathfrak{X}= \Spf(\scr{A})$ be a reduced admissible affine
  formal scheme over $\OO_K$ and let $U\subseteq \mathfrak{X}_0$ be a
  Zariski-dense open subset of the special fiber.  Suppose that the
  generic fiber $X=\Sp(\scr{A}\otimes_{\OO_K}K)$ is equidimensional.
  Then, for any $a\in \scr{A}\otimes_{\OO_K}K$, the supremum norm of $a$
  over $X$ is achieved on $\spe^{-1}(U)$.
\end{lemm}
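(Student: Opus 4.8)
The plan is to reduce the statement to the classical maximum modulus principle for supremum norms on reduced affinoids, which asserts that $\|a\|_{\sup}$ is attained at some point of $X$, and then to show that we can arrange this point to lie in $\spe^{-1}(U)$. First I would recall that for a reduced affinoid $X = \Sp(B)$ with $B = \scr{A}\otimes_{\OO_K}K$, the supremum norm $\|a\|_{\sup}$ equals $\max_{x\in X}|a(x)|$, and moreover the spectral seminorm is multiplicative on each irreducible component; since $X$ is equidimensional, the Shilov boundary of $X$ is a finite set of points and the maximum is attained there. The key point is that the locus $\spe^{-1}(U)$ is an admissible open in $X$ whose complement is ``small'' in the relevant sense: $\mathfrak{X}_0 \setminus U$ is a proper Zariski-closed subset of the special fiber, so $\spe^{-1}(\mathfrak{X}_0\setminus U)$ is contained in the generic fiber of the formal completion of $\mathfrak{X}$ along that closed subset, which has strictly smaller dimension.

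The main steps I would carry out: (1) Write $a = b/c$ after scaling so that $a \in \scr{A}$ and $\|a\|_{\sup} = 1$; actually it is cleaner to assume $a\in\scr{A}$ with $\|a\|_{\sup}\le 1$ and show the sup is attained on $\spe^{-1}(U)$ — for general $a\in B$ one clears denominators, noting that on an equidimensional reduced affinoid the sup norm of a fraction behaves well component-by-component. (2) Reduce $a$ modulo $\pi$ to get $\bar a \in \scr{A}/\pi\scr{A} = \OO(\mathfrak{X}_0)$; the condition $\|a\|_{\sup}=1$ says precisely that $\bar a \ne 0$. (3) Since $U$ is Zariski-dense in $\mathfrak{X}_0$ and $\bar a$ is a nonzero function, $\bar a|_U$ is a nonzero function, so there is a closed point $\bar x \in U$ with $\bar a(\bar x)\ne 0$; equivalently $\bar a$ is a unit in the local ring at some point of $U$ lying in the nonvanishing locus. (4) Use surjectivity of $\spe$ on closed points (Proposition 3.5 of \cite{formalrigid}, quoted above) to lift $\bar x$ to a point $x\in X(L)$ for some finite $L/K$ with $\spe(x)=\bar x$; then $\widehat{x}:\Spf(\OO_L)\to\mathfrak{X}$ has closed point $\bar x$, and $a(x)\in\OO_L$ reduces to $\bar a(\bar x)\ne 0$ in the residue field, hence $|a(x)| = 1 = \|a\|_{\sup}$. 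Since $x\in\spe^{-1}(U)$, this is exactly the assertion.

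The subtle point — and the reason equidimensionality is hypothesized — is step (1), the passage from $a\in\scr{A}$ with arbitrary sup norm to the normalized situation: if $\|a\|_{\sup} = |\pi|^{s}$ for some rational $s$ that is not an integer, or if $a$ does not extend to the formal model, one cannot simply reduce mod $\pi$. The clean fix is to invoke that on a reduced affinoid the sup norm is attained and to work locally on irreducible components where the spectral norm is multiplicative; after a finite extension of $K$ and multiplying by a suitable element of $K^\times$ one reduces each component to the case $\|a\|_{\sup}=1$, applies the argument above, and concludes that the maximum over that component is attained on $\spe^{-1}(U)$. Equidimensionality guarantees that when we pass to a blow-up or formal model adapted to a single component, the preimage of $U$ remains Zariski-dense in the corresponding piece of the special fiber, so the density hypothesis is not lost. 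I expect this bookkeeping with components and the compatibility of specialization under the necessary normalizations to be the only real obstacle; the core argument (reduce mod $\pi$, use density, lift via surjectivity of $\spe$) is short.
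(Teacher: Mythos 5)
Your core plan (reduce $a$ mod $\pi$, use Zariski-density of $U$ to find a non-vanishing closed point, lift via surjectivity of $\spe$) is the right one and matches the engine of the paper's proof, but there are two genuine gaps, and your diagnosis of where the subtlety lies — and hence of why equidimensionality is hypothesized — is off the mark.

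The first gap is the passage from $a \in \scr{A}\otimes_{\OO_K}K$ with $\|a\|_{\sup}=1$ to $a\in\scr{A}$. This is not a matter of scaling: even after scaling so that $\|a\|_{\sup}=1$, the element $a$ need not lie in the formal model $\scr{A}$ unless $\scr{A}$ is \emph{normal}. (When $\scr{A}$ is normal this is exactly Theorem 7.4.1 of \cite{dejong}: the unit ball for the sup norm coincides with $\scr{A}$.) Your proposal silently treats $a$ as an element of $\scr{A}$ from step (2) onward. The paper handles this by first proving the lemma in the normal case using de Jong's result, and then, in the general case, passing to the normalization $f:\widetilde{\mathfrak{X}}\to\mathfrak{X}$, which is finite by excellence; the point of the equidimensionality hypothesis is precisely to ensure that $\widetilde{\mathfrak{X}}_0$ is again equidimensional of the same dimension, so that finiteness of $f_0$ carries generic points to generic points and hence $f_0^{-1}(U)$ is still Zariski-dense. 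Your explanation of equidimensionality in terms of blow-ups and density of preimages is not what is going on; the issue is controlling the special fiber under normalization, especially when it is non-reduced (see the remark following the lemma in the paper).

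The second gap is in step (3): from $\bar a|_U \ne 0$ you conclude that $\bar a(\bar x)\ne 0$ at some closed point $\bar x\in U$. This is false in general because $\mathfrak{X}_0 = \Spec(\scr{A}/\pi\scr{A})$ need not be reduced (the paper explicitly notes this is often the case here), so a nonzero $\bar a$ could be nilpotent and vanish at every closed point. The paper avoids this trap by arguing contrapositively: if $\bar a$ vanishes at every closed point of $U$, then by density it vanishes everywhere, hence $\bar a$ is nilpotent, i.e.\ $a^n\in\pi\scr{A}$ for some $n$; but then $\|a^n\|_{\sup}\le|\pi|<1$, contradicting power-multiplicativity ($\|a^n\|_{\sup}=\|a\|_{\sup}^n = 1$). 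You need this power-multiplicativity argument; merely asserting $\bar a\ne 0$ does not produce a point where $\bar a$ is non-vanishing.
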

\begin{proof}
  Let us first prove the lemma in the case that $\scr{A}$ is normal.
  First note that if $\|a\|_{\sup}=0$, then the result is obvious.
  Otherwise, since the supremum norm is power-multiplicative we may
  assume that $\|a\|_{\sup}$ is a norm from $K$ and scale to reduce to
  the case $\|a\|_{\sup}=1$.  By Theorem 7.4.1 of \cite{dejong} it
  follows that $a\in \scr{A}$ (this is where normality is used).  If
  the reduction $a_0\in \scr{A}_0 = \scr{A}/\pi\scr{A}$ vanishes at
  every closed point of $U$, then it vanishes everywhere by density,
  so $a_0^n = 0$ in $\scr{A}_0$ for some $n$, which is to say that
  $\pi|a^n$ in $\scr{A}$.  But this is impossible because by
  power-multiplicativity we have $\|a^n\|_{\sup}=1$ for all $n\geq 1$.
  Thus $a_0$ must be non-vanishing at some point of $U$. By the
  surjectivity of the specialization map we can find a point $x$
  reducing to this point.  Clearly then $|a(x)|=1$, which establishes
  the normal case.
  
  Suppose that $X$ is equidimensional of dimension $d$.  We claim that
  it follows that the special fiber $\mathfrak{X}_0$ must be
  equidimensional of dimension $d$ as well.  Indeed, inside each
  irreducible component of this special fiber we can find a nonempty
  Zariksi-open subset $V$ that does not meet any of the other
  irreducible components.  The generic fiber $V_\rig$ is an admissible
  open in $X$ and therefore has dimension $d$.  It follows that $V$
  has dimension $d$, and the claim follows.
  
  Let $f:\widetilde{\mathfrak{X}}\longrightarrow \mathfrak{X}$ be the
  normalization map (meaning $\Spf$ applied to the normalization map
  on algebras) and note that this map is finite by general excellence
  considerations.  By Theorem 2.1.3 of \cite{conradirredcpnts} the
  generic fiber of this map coincides with the normalization of $X$.
  Thus $\widetilde{\mathfrak{X}}_\rig$ is also equidimensional of
  dimension $d$ and the argument above shows that
  $\widetilde{\mathfrak{X}}_0$ is equidimensional of dimension $d$ as
  well.  Now since $f$ is finite it follows that $f_0$ carries
  generic points to generic points.  In particular we see that
  $f_0^{-1}(U)$ is Zariski-dense in $\widetilde{\mathfrak{X}}_0$.
  Thus by the normal case proven above there exists $x\in
  \widetilde{\mathfrak{X}}_\rig$ reducing to $f_0^{-1}(U)$ at which
  $a$ (thought of as an element of
  $\widetilde{\scr{A}}\otimes_{\OO_K}K$) attains its supremum norm.
  But then $f(x)$ is a point in $X$ reducing to $U$ with the same
  property, since the supremum norm of $a$ is the same thought of on
  $X$ or on $\widetilde{X}$ (since $\widetilde{X}\longrightarrow X$ is
  surjective).
\end{proof}

\begin{rema}
  Note that the proof in the normal case did not use the
  equidimensionality hypothesis.  This hypothesis may not be required
  in the general case, but the above proof breaks down without it
  since it is not clear how to control the special fiber under
  normalization in general, especially if $\mathfrak{X}_0$ is
  non-reduced (as is often the case for us).
\end{rema}

\begin{coro}\label{genmaxprinc}
  Let $\mathfrak{X}$ be a reduced quasi-compact admissible formal
  scheme over $\OO_K$, let $U\subseteq \mathfrak{X}_0$ be a
  Zariski-dense open, and let $\mathfrak{L}$ be an invertible sheaf on
  $\mathfrak{X}$.  Assume that $\mathfrak{X}_{\rig}$ is
  equidimensional.  Then, for any $f\in
  H^0(\mathfrak{X}_\rig,\mathfrak{L}_\rig)$ we have
  $$\|f\|_{\mathfrak{X}_\rig} = \sup_{x\in \spe^{-1}(U)}|x^*f|_x =
  \max_{x\in \spe^{-1}(U)}|x^*f|_x.$$   
\end{coro}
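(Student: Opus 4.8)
The plan is to bootstrap from the affine case established in Lemma~\ref{supmaxprinc}, using the same kind of trivializing cover that appeared in the proof of Lemma~\ref{banachspaces}. Since $\mathfrak{X}$ is quasi-compact I would cover it by finitely many admissible formal affine opens $\mathfrak{U}_i = \Spf(\scr{A}_i)$ on each of which $\mathfrak{L}$ is trivial, fix a trivializing section $\ell_i$ of $\mathfrak{L}$ over $\mathfrak{U}_i$, and set $\scr{U}_i = (\mathfrak{U}_i)_\rig$, so that the $\scr{U}_i$ form a finite admissible affinoid cover of $\mathfrak{X}_\rig$. Writing $f|_{\scr{U}_i} = a_i\ell_i$ with $a_i \in \OO(\scr{U}_i) = \scr{A}_i\otimes_{\OO_K}K$, the fact that $\ell_i$ generates the \emph{formal} sheaf $\mathfrak{L}$ over $\mathfrak{U}_i$ means that $\widehat{x}^*\ell_i$ generates $\widehat{x}^*\mathfrak{L}$ over $\OO_L$ for every $L$-point $x$ of $\scr{U}_i$ (the map $\widehat x$ factors through the open $\mathfrak{U}_i$ since $\spe(x)\in(\mathfrak{U}_i)_0$); hence $|x^*\ell_i|_x = 1$ and $|x^*f|_x = |a_i(x)|$ for all $x\in\scr{U}_i$, and exactly as in Lemma~\ref{banachspaces} one gets $\|f\|_{\mathfrak{X}_\rig} = \max_i\|a_i\|_{\sup}$, the supremum norms being taken over the $\scr{U}_i$.

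Next I would verify that each $\mathfrak{U}_i$ satisfies the hypotheses of Lemma~\ref{supmaxprinc}. Each $\scr{A}_i$ is reduced because $\mathfrak{X}$ is and $\mathfrak{U}_i$ is open in it. The generic fiber $\scr{U}_i$ is a nonempty admissible open of the equidimensional space $\mathfrak{X}_\rig$ (discard any $\mathfrak{U}_i$ with empty generic fiber), and is therefore itself equidimensional: its irreducible components are the nonempty traces on $\scr{U}_i$ of those of $\mathfrak{X}_\rig$, each being a nonempty admissible open of an irreducible component and hence of the same dimension. The special fiber $(\mathfrak{U}_i)_0$ is a Zariski-open subscheme of $\mathfrak{X}_0$, so $U_i := U\cap(\mathfrak{U}_i)_0$ is Zariski-dense in $(\mathfrak{U}_i)_0$, since a Zariski-dense subset of $\mathfrak{X}_0$ meets every nonempty open of $\mathfrak{X}_0$, in particular every nonempty open contained in $(\mathfrak{U}_i)_0$. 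Finally, the specialization map for $\mathfrak{U}_i$ is the restriction of that for $\mathfrak{X}$, so $\spe^{-1}(U_i) = \spe^{-1}(U)\cap\scr{U}_i$.

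With these in hand, I would apply Lemma~\ref{supmaxprinc} to $(\mathfrak{U}_i, U_i, a_i)$ to produce, for each $i$, a point $x_i\in\spe^{-1}(U_i)$ with $|a_i(x_i)| = \|a_i\|_{\sup}$ (vacuous when $a_i=0$). Picking an index $i$ with $\|a_i\|_{\sup} = \max_j\|a_j\|_{\sup}$, the point $x_i$ lies in $\spe^{-1}(U)$ and satisfies $|x_i^*f|_{x_i} = |a_i(x_i)| = \|f\|_{\mathfrak{X}_\rig}$, so the norm $\|f\|_{\mathfrak{X}_\rig}$ is actually attained on $\spe^{-1}(U)$. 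This gives $\|f\|_{\mathfrak{X}_\rig} = \max_{x\in\spe^{-1}(U)}|x^*f|_x$, and the supremum over $\spe^{-1}(U)$ is then trapped between this maximum and $\|f\|_{\mathfrak{X}_\rig}$, forcing all three quantities to coincide.

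The argument is largely bookkeeping on top of Lemma~\ref{supmaxprinc}; the one step deserving care is the claim that the affinoids $\scr{U}_i$ of the cover inherit equidimensionality from $\mathfrak{X}_\rig$, since this is exactly what licenses the local application of that lemma, and I would settle it using the stability of irreducible components (and their dimensions) under passage to admissible opens. A secondary point worth stating explicitly is that the formal trivializations $\ell_i$ have norm $1$ at every point, so that the intrinsic norms $|x^*f|_x$ are genuinely computed by the affinoid functions $a_i$.
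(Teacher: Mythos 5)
Your argument is correct and is essentially the paper's own proof, which is stated in one line: cover $\mathfrak{X}$ by finitely many admissible formal affine opens trivializing $\mathfrak{L}$ and apply Lemma~\ref{supmaxprinc} on each. What you have added is the verification (left implicit in the paper) that each affine piece $(\mathfrak{U}_i, U_i)$ inherits the hypotheses of that lemma — reducedness of $\scr{A}_i$, equidimensionality of $\scr{U}_i$, density of $U_i$, and $\spe^{-1}(U_i)=\spe^{-1}(U)\cap\scr{U}_i$ — together with the observation that $|x^*\ell_i|_x=1$, which is exactly the identity $\|f\|_{\mathfrak{X}_\rig}=\max_i\|a_i\|_{\sup}$ already used in Lemma~\ref{banachspaces}.
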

\begin{proof}
  Cover $\mathfrak{X}$ be a finite collection of admissible formal
  affine opens trivializing $\mathfrak{L}$ and apply Lemma
  \ref{supmaxprinc} on each such affine separately.
\end{proof}

The invertible sheaves whose sections we will be taking norms of in
this paper will all be of the form $\OO_X(D)$ for some divisor $D$ on
$X=X_1(N)_K$ or $X_1(N,n)_K$ supported on the cusps.  In the end, the
main consequence of the previous Corollary (namely, Lemma
\ref{ignorecusps}) will be that these norms are equal to the supremum
norm of the restriction of the section in question to the complement
of the residue disks around the cusps (where it is simply an analytic
function).  We feel that it is worthwhile to give more
natural definitions using the above norm machinery in the cases that it
applies to (those where we have nice moduli schemes to work with) in
the hopes that the techniques used and the above Corollary will be
useful in other similar situations.  The reader who is content with
this equivalent ``ad hoc'' definition (that is, the supremum norm on
the complement of the residue disks around the cusps) can skip to
Section \ref{weightspace} and ignore the appendix all together.

In order to endow spaces of sections of a line bundle as in the
previous paragraph with norms using the techniques above, we need
formal models of the spaces $X$ and sheaves $\OO(D)$.  For technical
reasons (involving regularity of certain moduli stacks) we are forced
to work over $\Z_p$ in going about this. The formal models over $\OO_K$
will then be obtained by extension of scalars.  The general procedure
for obtaining formal models over $\Z_p$ goes as follows.  Let $X$
denote one the stacks $X_1(N)$ or $X_1(N,n)$ over $\Z_p$ and assume
that the generic fiber $X_{\Q_p}$ is a scheme.  Let $D$ be a divisor
on $X_{\Q_p}$ that is supported on the cusps.  If the closure
$\overline{D}$ of $D$ in $X$ lies in the maximal open subscheme $X^{sch}$ of
$X$ and this subscheme is moreover regular along $\overline{D}$, then
this closure is Cartier and we may associate to it the invertible
sheaf $\OO(\overline{D})$ on $X^{sch}$.  Let $(X^{sch})^{\widehat{}}$
and $\OO(\overline{D})^{\widehat{}}$ denote the formal completions of
these objects along the special fiber.

In case $X=X_1(N)$ or $X_1(N,n)$ with $p\nmid n$, assume that $N$ has
a divisor that is prime to $p$ and at least $5$.  Then $X^{sch} =X$ by
Theorem 4.2.1 of \cite{conradamgec}, and $X$ is moreover regular (at
least over $\Z_{(p)}$) by Theorem 4.1.1 of \cite{conradamgec}.  That
passage to $\Z_p$ preserves regularity follows by excellence
considerations from the fact that $\Z_{(p)}\longrightarrow \Z_p$ is
geometrically regular.  Strictly speaking the results of
\cite{conradamgec} do not apply to $X_1(N,n)$ as stated, but since
$p\nmid n$ the proofs of these results are still valid over
$\Z_{(p)}$, as is observed in the appendix.  Since $X$ is proper over
$\Z_p$, we have $\widehat{X}_{\rig} = X^\an_{\Q_p}$ (the
analytification of the algebraic generic fiber of $X$) and hence we
have a formal model $(\widehat{X},\OO(\overline{D})^{\widehat{}}\ )$ of
$(X^\an_{\Q_p},\OO(D))$.

Now suppose that $X = X_1(Mp,p^2)$ for an integer $M\geq 5$ prime to
$p$.  Let $D$ be any divisor supported on the cusps in the connected
component $X_1(Mp,p^2)^\an_{\geq 1}$ of the ordinary locus.  By
Theorem \ref{app2} of the appendix, the closure $\overline{D}$ of $D$
in $X$ lies in $X^{sch}$ and is Cartier.  Thus we obtain a formal
model $((X^{sch})^{\widehat{}},\OO(\overline{D})^{\widehat{}}\ )$ of
$((X^{sch})^{\widehat{}}_\rig,\OO(D))$.  Observe that, by Lemma
\ref{scheme} and the comments that follow it, $X^{sch}$ is simply the
complement of a finite collection of cusps on the characteristic $p$
fiber (namely, the ones with nontrivial automorphisms).  It follows
that the open immersion
\begin{equation}\label{immersion}
(X^{sch})^{\widehat{}}_\rig\hookrightarrow (X^{sch}_{\Q_p})^\an\cong
X_{\Q_p}^\an
\end{equation}
identifies the Raynaud generic fiber on the left with the complement
of the residue disks around the cusps in the analytification on the
right that reduce to the missing points in characteristic $p$.  Thus
(\ref{immersion}) is an isomorphism when restricted to any connected
component of the locus defined by $v(E)\leq r$ that contains
no such cusps.  In particular, it is an isomorphism when restricted to 
$X_1(Mp,p^2)^\an_{\geq p^{-r}}$ by Theorem \ref{app2} of the
appendix.  

Given a complete discretely-valued extension $K/\Q_p$, we may extend
scalars on the formal models of $\OO(D)$ we have obtained to arrive at
norms on the following spaces.
\begin{itemize}
\item sections of $\OO(D)$ over any admissible open $\scr{U}$ in
  $X=X_1(N)^\an_K$ (resp. $X_1(N,n)^\an_K$ with $p\nmid n$), where $D$
  is (the scalar extension of) a divisor on $X_1(N)_{\Q_p}$
  (resp. $X_1(N,n)_{\Q_p}$) and $N$ is divisible by an integer
  that is prime to $p$ and at least $5$
\item sections of $\OO(D)$ over any admissible open $\scr{U}$ in
  $X=X_1(Mp,p^2)^\an_{\geq p^{-r}}$, where $D$ is (the scalar
  extension of) a divisor supported on the cusps in
  $X_1(Mp,p^2)^\an_{\Q_p}$ and $M$ is an integer that is prime to $p$
  and at least $5$
\end{itemize}

\begin{lemm}\label{ignorecusps}
  Let $X$, $D$, and $\scr{U}$ be as in either of the two cases above
  and assume that $\scr{U}$ contains every component of the ordinary
  locus that it meets.  Let $\scr{U}'$ denote the complement of the
  residue disks around the cusps in $\scr{U}$.  Then for any $f\in
  H^0(\scr{U},\OO(D))$ we have $$\|f\|_{\scr{U}} = \| f|_{\scr{U}'}
  \|_{\sup}.$$
\end{lemm}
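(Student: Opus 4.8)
The plan is to deduce this from Corollary \ref{genmaxprinc} by exhibiting an appropriate formal model in which the cuspidal residue disks are precisely the preimage under specialization of the locus one wishes to discard. First I would reduce to the case where $\scr{U}$ itself is the generic fiber of a reduced quasi-compact admissible formal scheme $\mathfrak{X}$ over $\OO_K$ carrying a formal model of $\OO(D)$: in the first bullet this is $\widehat{X}$ (extended to $\OO_K$) intersected with a formal blowup cutting out $\scr{U}$, and in the second bullet it is $(X^{sch})^{\widehat{}}$ extended to $\OO_K$, again intersected with a blowup carving out $\scr{U}$; the key point being that by the discussion around (\ref{immersion}) the Raynaud generic fiber of $(X^{sch})^{\widehat{}}$ is already the complement of the cuspidal residue disks (around the automorphism-bearing cusps) inside $X^\an_{\Q_p}$, so that after base change and blowup all of $\scr{U}$ is accounted for. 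The hypothesis that $\scr{U}$ contains every ordinary component it meets guarantees that $\mathfrak{X}_\rig = \scr{U}$ is equidimensional (of dimension $1$), as required to invoke the corollary.

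Next I would identify the Zariski-dense open $U \subseteq \mathfrak{X}_0$ to feed into Corollary \ref{genmaxprinc}. Take $U$ to be the complement in $\mathfrak{X}_0$ of the finitely many cusps that remain in the special fiber of this model — equivalently, the locus whose points do not reduce to a cusp. This is Zariski-dense because the cusps form a finite (hence closed, nowhere-dense in a one-dimensional reduced scheme) subset of each irreducible component, using equidimensionality so that no component is entirely cuspidal. Then $\spe^{-1}(U)$ is exactly the complement of the residue disks around the cusps in $\scr{U}$, i.e. $\scr{U}'$: indeed a point of $\scr{U}$ lies in the residue disk of a cusp if and only if it specializes to that cusp, by the standard description of residue disks as fibers of the specialization map over smooth points (here the cusps are smooth points of $\mathfrak{X}_0$ since $X^{sch}$, resp. $X$, is regular along the cusps in the relevant cases). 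Corollary \ref{genmaxprinc} then gives directly $\|f\|_{\scr{U}} = \sup_{x \in \spe^{-1}(U)} |x^*f|_x = \|f|_{\scr{U}'}\|_{\scr{U}'}$, and on $\scr{U}'$ the sheaf $\OO(D)$ is canonically trivialized away from the support of $D$ (which sits in the discarded disks), so the right-hand norm is the ordinary supremum norm $\|f|_{\scr{U}'}\|_{\sup}$.

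The main obstacle I expect is the bookkeeping in the second case ($X = X_1(Mp,p^2)$ with the locus $\geq p^{-r}$): one must be careful that the open immersion (\ref{immersion}) really does identify $(X^{sch})^{\widehat{}}_\rig$ with the complement of all cuspidal residue disks one means to remove, that passing to a blowup to realize $\scr{U}$ does not introduce spurious components in the special fiber that fail equidimensionality, and that the divisor $D$ — supported only on cusps — is genuinely invisible on $\scr{U}'$ so that $|x^*f|_x$ agrees with the value of $f$ as an analytic function there. All of this is handled by the references already cited (Theorem \ref{app2} of the appendix, Lemma \ref{scheme}) together with the base-change compatibility of the norm construction noted in Section \ref{norms}; once these are invoked, the argument is a direct application of the corollary and requires no further analysis.
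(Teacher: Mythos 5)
Your overall strategy—apply Corollary \ref{genmaxprinc} with $U$ equal to the complement of the cusps in the special fiber—is the right one, and it is essentially what the paper does. But your execution diverges from the paper in a way that introduces a real gap, and it also misses where the hypothesis on $\scr{U}$ actually enters.

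The paper does \emph{not} pass to a formal blowup. It first observes that $\scr{U}$ and $\scr{U}'$ have the same supersingular locus (the cuspidal disks being entirely ordinary), so the norms trivially agree there, and it suffices to treat the ordinary locus of $\scr{U}$. The hypothesis that $\scr{U}$ contains every ordinary component it meets then guarantees that this ordinary locus is the generic fiber of an \emph{admissible formal open} of $\widehat{X_1(N)}$ (the union of the relevant irreducible components of the special fiber with the supersingular points removed)—no blowup required. With that formal open in hand, Corollary \ref{genmaxprinc} applies directly, with the specialization map being the restriction of the one already fixed on $X_1(N)^\widehat{}$, so the residue disks in the corollary are literally the residue disks referenced in the statement of the lemma.

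Your route through Raynaud's theorem creates the following problem: after blowing up, the specialization map $\spe'$ of the new model $\mathfrak{X}'$ is a refinement of the original $\spe$ (one has $\spe = \pi_0 \circ \spe'$ where $\pi_0 : \mathfrak{X}'_0 \to \mathfrak{X}_0$), and its fibers need not coincide with the residue disks used to define $\scr{U}'$. In particular, a point of a cuspidal residue disk (defined using the original $\spe$) could specialize under $\spe'$ to an exceptional component rather than to the image of the cusp, in which case your $\spe'^{-1}(U)$ is strictly larger than $\scr{U}'$ and the equality you want to read off fails. The assertion that ``the cusps are smooth points of $\mathfrak{X}_0$'' also refers to the \emph{blowup's} special fiber, and regularity of $X^{sch}$ along cusps does not by itself imply that the exceptional locus avoids them. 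Both gaps are closed precisely by the hypothesis that $\scr{U}$ contains every ordinary component it meets (so that any blowup carving out $\scr{U}$ can be localized away from the ordinary, hence cuspidal, region); but you deploy that hypothesis only to argue equidimensionality—which is automatic for an admissible open of a curve—rather than for the purpose that makes the argument go through. To repair your proof you would either need to establish that the blowup can be chosen to be an isomorphism over the ordinary part of the special fiber, or, more simply, adopt the paper's ordinary/supersingular decomposition and avoid blowups altogether.
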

\begin{proof}
  We will treat the case of $X=X_1(N)^\an_K$; the other cases are
  proven in exactly the same manner.  First note that, since points in
  $\scr{U}'$ reduce to points outside of the support of
  $\overline{D}$, the claim is equivalent to the assertion that
  $$\|f\|_{\scr{U}} = \|f|_{\scr{U}'}\|_{\scr{U}'}.$$ That is, the
  norm on $\scr{U}'$ that we have defined using formal models happens
  to be equal to the supremum norm on $\scr{U}'$.
  
  Note that the supersingular loci of $\scr{U}$ and $\scr{U}'$
  coincide, so the contributions to the above norms over this locus
  are equal, and it suffices to check the assertion upon restriction
  to the ordinary locus.  By assumption, the ordinary locus in
  $\scr{U}$ is a finite union of connected components of the ordinary
  locus in $X_1(N)^\an_K$.  Each such component corresponds via
  reduction to an irreducible component of the special fiber.  Let
  $\mathfrak{X}$ denote the admissible formal open in
  $X_1(N)^{\widehat{}}$ given by the union of the components so
  obtained with the supersingular points removed.  Then
  $\mathfrak{X}_\rig$ is precisely the ordinary locus in $\scr{U}$,
  and the result now follows from Corollary \ref{genmaxprinc} with $U$
  equal to the complement of the cusps in $\mathfrak{X}_0$.
\end{proof}

\begin{rema}
  There remain some curves on which we will need to have norms for
  sections of $\OO(D)$ but to which the norm machinery as set up here does
  not apply.  Namely, for $p\neq 2$ we have the curves
  $X_1(4p^m)^\an_K$ and $X_1(4p^m,p^2)^\an_K$, while for $p=2$ we have
  $X_1(2^{m+1}N)^\an_K$ and $X_1(2^{m+1}N,4)^\an_K$, where $m\geq 1 $
  and $N\in \{1,3\}$.  The previous lemma suggests an \emph{ad hoc}
  workaround to this problem.  In case we are working with sections of
  $\OO(D)$ for a cuspidal divisor on one of these curves, we simply
  \emph{define} the norm to be the supremum norm of the restriction of
  our section to the complement of the residue disks about the
  cusps. A more natural definition would likely result from
  considerations of ``formal stacks,'' but this norm would
  surely turn out to be equal to ours by an analog of Lemma
  \ref{ignorecusps}.
\end{rema}

\subsection{Weight space}\label{weightspace}  Throughout most of this
paper $\scr{W}$ will denote $p$-adic weight space (everywhere except
for the beginning of Section \ref{sec:eigencurve} where it is allowed
to be a general reduced rigid space for the purpose of reviewing a
general construction).  That is, $\scr{W}$ is a rigid space over
$\Q_p$ whose points with values in an extension $K/\Q_p$ are
$$\scr{W}(K) = \Hom_{\small\rm cont}(\Z_p^\times ,K^\times).$$ Define
$\q=p$ if $p\neq 2$ and $\q=4$ if $p=2$.  Let
$$\tau:\Z_p^\times\longrightarrow (\Z/\q\Z)^\times \longrightarrow
\Q_p^\times$$ denote reduction composed with the Teichmuller lifting,
and let $\ip{x} = x/\tau(x)\in 1+\q\Z_p$.  For a weight $\kappa$ we
have $$\kappa(x) = \kappa(\ip{x})\kappa(\tau(x)) =
\kappa(\ip{x})\tau(x)^i$$ for a unique integer $i$ with $0\leq i
<\varphi(\q)$ (where $\varphi$ denotes Euler's function).  Moreover,
this breaks up the space $\scr{W}$ as the admissible disjoint union of
$\varphi(\q)$ admissible opens $\scr{W}^i$, each of which is
isomorphic to a one-dimensional open ball.

For each positive integer $n$, let $\scr{W}_n$ denote the admissible
open subspace of $\scr{W}$ whose points are those $\kappa$ with
$$|\kappa(1+\q)^{p^{n-1}} - 1| \leq |\q|.$$
Then $\scr{W}_n^i :=
\scr{W}^i\cap \scr{W}_n$ is an affinoid disk in $\scr{W}^i$ and the
$\{\scr{W}^i_n\}_n$ form a nested admissible cover of $\scr{W}^i$.

To each integer $\lambda$ we may associate the weight $x\mapsto
x^\lambda$.  This weight, which by abuse of notation we simply refer
to as $\lambda$, lies in $\scr{W}^i$ for the unique $i\equiv
\lambda\pmod{\varphi(\q)}$.  Also, if $\lambda$ is an integer and
$\psi:(\Z/\q p^{n-1}\Z)^\times\longrightarrow \C_p^\times$ is a
character, then $x\mapsto x^\lambda\psi(x)$ is a point in $\scr{W}$
(with values in $\Q_p(\mu_{p^{n-1}})$) which lies in $\scr{W}_n$, as
standard estimates for $|\zeta-1|$ for roots of unity $\zeta$
demonstrate.

\section{Some modular functions}\label{sec:somefunctions}

Our definition of the spaces of half-integral weight modular forms
will follow the general approach of \cite{colmaz} (in the integral
weight $p$-adic situation) and \cite{mfhi} (in the half-integral
weight situation).  The motivating idea behind this approach is to
reduce to weight zero by dividing by a well-understood form of the
same weight.  For example, if $f$ is a half-integral weight $p$-adic
modular form of weight $k/2$, $\theta$ is the usual Jacobi theta
function of weight $1/2$, and $E_\lambda$ is the weight
$\lambda=(k-1)/2$ Eisenstein series introduced below, then
$f/(E_{\lambda}\theta)$ should certainly be a meromorphic modular
function of weight zero.  As we have no working notion of
``half-integral weight $p$-adic modular form'' we simply use the
weight zero forms so obtained as the \emph{definition} of this notion.
One must of course work out issues such as exactly what kind of poles
are introduced, how dividing by $\theta E_{\lambda}$ affects the
nebentypus character, and how to translate the classical Hecke action
into an action on these new forms.  The precise definition will be
given in the next section.

We remark that this was carried out by the author in \cite{mfhi} by
dividing by $\theta^k$ instead of $\theta E_{\lambda}$.  That approach
had the disadvantage of limiting us to \emph{classical} weights $k/2$,
whereas the current approach will work for more general $p$-adic
weights (and indeed, for families of modular forms) since
$E_{\lambda}$ interpolates nicely in the variable $\lambda$. 
  
This technique of division to reduce to weight zero in order to define
modular forms forces us to modify the usual construction of the Hecke
operators using the Hecke correspondences on the curve $X_1(N)$ by
multiplying by certain functions on the source spaces of these
correspondences.  Our first task is to define these functions and to
establish their overconvergence properties.  Since we are dividing by
$E_\lambda\theta$ to reduce to weight zero, we will require, for each
prime number $\ell$, a modular function whose $q$-expansion (at the
appropriate cusp, on the appropriate space, which depends on whether
or not $\ell=p$) is
$$\frac{E_\lambda(q_{\ell^2})\theta(q_{\ell^2})}{E_\lambda(q)\theta(q)}.$$
Factoring this into its Eisenstein part and theta part we split the
problem into two problems, the first of which is nearly done in the
integral-weight literature (see
\cite{buzzardeigenvarieties},\cite{colmaz}), and the second of which
is done in an earlier paper of the author (\cite{mfhi}).  We briefly
review both here.  See the aforementioned references for additional
details.  Note that all analytic spaces in this section are taken over
$\Q_p$.

Let $\mathbf{c}$ denote the cusp on $X_1(4)_{\Q}$ corresponding to the
point $\zeta_4q_2$ of order $4$ on the Tate curve.
Define a $\Q$-divisor $\Sigma_{4N}$ on the curve $X_1(4N)_{\Q}$
by
$$\Sigma_4 := \frac{1}{4}\pi^*[\mathbf{c}]$$ where
$$\pi:X_1(4N)_\Q\longrightarrow X_1(4)_\Q$$
is the obvious degeneracy
map.  This divisor is set up to look like the divisor of zeros of the
pullback of the Jacobi theta function $\theta$ to $X_1(4N)_{\Q}$ and
will later be used to control poles introduced in dividing by
$E_\lambda\theta$.

In \cite{mfhi} we defined a rational function $\Theta_{\ell^2}$ on
$X_1(4,\ell^2)_{\Q}$ with divisor
$$\div(\Theta_{\ell^2}) = \pi_2^*\Sigma_4 - \pi_1^*\Sigma_4$$
such
that
$$\Theta_{\ell^2}(\Tate(q),\zeta_4,\ip{q_{\ell^2}}) =
\frac{\sum_{n\in\Z}q_{\ell^2}^{n^2}} {\sum_{n\in\Z} q^{n^2}} =
\frac{\theta(q_{\ell^2})} {\theta(q)}.$$ Here $\pi_1$ and $\pi_2$ are
the maps comprising the $\ell^2$ Hecke correspondence on $X_1(4)$ and
are defined in Section \ref{heckefixed}.  Strictly speaking, we had
assumed $\ell\neq 2$ in the arguments in \cite{mfhi}, but if one is
only interested in the result above, then one can easily check that
the arguments work for $\ell=2$ verbatim.

Let us now turn to the Eisenstein part of the above functions.  For
further details and proofs of the assertions in this paragraph, we
refer the reader to Sections 6 and 7 of \cite{buzzardeigenvarieties}.
Let
$$E(q) := 1 +\frac{2}{\zeta_p(\kappa)}\sum_n \left(\sum_{d|n\ \!,\ \!
  p\nmid d}\kappa(d)d^{-1}\right)q^n \in \scr{O}(\scr{W}^0)[\![q]\!]$$
be the $q$-expansion of the $p$-deprived Eisenstein family over
$\scr{W}^0$.  Note that there are no problems with zeros of $\zeta_p$
since we are restricting our attention to $\scr{W}^0$.  For a
particular choice of $\kappa\in\scr{W}^0$, we denote by $E_\kappa(q)$
the expansion obtained by evaluating all of the coefficients at
$\kappa$.  In particular, for a positive integer $\lambda\geq 2$
divisible by $\varphi(\q)$, $E_\lambda(q)$ is the $q$-expansion of the
usual $p$-deprived classical Eisenstein series of weight $\lambda$ and
level $p$.

Let $\ell$ be a prime number.  If $\ell\neq p$, then there exists a
rigid analytic function $\E_\ell$ on $X_0(p\ell)^\an_{\geq
  1}\times\scr{W}^0$ whose $q$-expansion at $(\Tate(q),\mu_{p\ell})$
is $E(q)/E(q^\ell)$.  If $\ell=p$, then the same holds with
$X_0(p\ell)^\an_{\geq 1}$ replaced by $X_0(p)^\an_{\geq 1}$ and
$\mu_{p\ell}$ replaced by $\mu_p$.  In \cite{buzzardeigenvarieties} it
is shown that there exists a sequence of rational numbers
$$\frac{1}{p+1}>r_1\geq r_2\geq\cdots\geq r_n\geq\cdots >0$$ with
$r_i< p^{2-i}/\q(1+p)$ such that, when restricted to
$X_0(p\ell)^\an_{\geq 1}\times\scr{W}^0_n$ (respectively,
$X_0(p)^\an_{\geq 1}\times\scr{W}^0_n$ if $\ell=p$), $\E_\ell$
analytically continues to an invertible function on
$X_0(p\ell)^\an_{\geq p^{-r_n}}\times \scr{W}^0_n$ (respectively,
$X_0(p)^\an_{\geq p^{-r_n}}\times \scr{W}^0_n$ if $\ell=p$).  Fix such
a sequence once and for all.  Let us first extend these results to
square level.
\begin{lemm}\label{lemma1}
  Let $\ell\neq p$ be a prime number.  There exists an invertible
  function $\E_{\ell^2}$ on $X_0(p\ell^2)^\an_{\geq 1}\times\scr{W}^0$
  whose $q$-expansion at $(\Tate(q),\mu_{p\ell^2})$
  is
  $E(q)/E(q^{\ell^2})$.  Moreover, the function $\E_{\ell^2}$, when
  restricted to $\scr{W}^0_n$, analytically continues to an invertible
  function on $X_0(p\ell^2)^\an_{\geq p^{-r_n}}\times\scr{W}^0_n$.
  
  There exists an invertible function $\E_{p^2}$ on $X_0(p)^\an_{\geq
    1}\times \scr{W}^0$ whose $q$-expansion at $(\Tate(q),\mu_p)$ is
  $E(q)/E(q^{p^2})$.  Moreover, the function $\E_{p^2}$, when
  restricted to $\scr{W}^0_n$, analytically continues to an invertible
  function on $X_0(p)^\an_{\geq p^{-r_n/p}}\times \scr{W}^0_n$.
\end{lemm}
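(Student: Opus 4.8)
The plan is to build both functions as products of pullbacks of the functions $\E_\ell$ (resp.\ $\E_p$) recalled above, using the factorizations
\[
\frac{E(q)}{E(q^{\ell^2})}=\frac{E(q)}{E(q^\ell)}\cdot\frac{E(q^\ell)}{E(q^{\ell^2})},\qquad
\frac{E(q)}{E(q^{p^2})}=\frac{E(q)}{E(q^p)}\cdot\frac{E(q^p)}{E(q^{p^2})},
\]
together with the observation that the quotient isogeny $\Tate(q)\to\Tate(q)/\mu_m\cong\Tate(q^m)$ induces the substitution $q\mapsto q^m$ on $q$-expansions at the standard cusps. For $\ell\neq p$, use $X_0(p\ell^2)\cong X_0(p)\times_{X_0(1)}X_0(\ell^2)$ and let $\pi_1,\pi_2\colon X_0(p\ell^2)\to X_0(p\ell)$ be induced by the two degeneracy maps $X_0(\ell^2)\to X_0(\ell)$, namely $(E,C)\mapsto(E,C[\ell])$ and $(E,C)\mapsto(E/C[\ell],C/C[\ell])$, leaving the level-$p$ structure untouched. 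A direct check on the cusp $(\Tate(q),\mu_{p\ell^2})$ shows $\pi_1$ sends it to $(\Tate(q),\mu_{p\ell})$ and $\pi_2$ to $(\Tate(q^\ell),\mu_{p\ell})$, so $\pi_1^*\E_\ell$ and $\pi_2^*\E_\ell$ have $q$-expansions $E(q)/E(q^\ell)$ and $E(q^\ell)/E(q^{\ell^2})$ there; I would set $\E_{\ell^2}:=\pi_1^*\E_\ell\cdot\pi_2^*\E_\ell$, whose $q$-expansion is then $E(q)/E(q^{\ell^2})$ by multiplicativity.

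For the analytic continuation in this case the crucial point is that $\ell\neq p$, so neither $\pi_1$ (which does not touch the underlying curve) nor $\pi_2$ (which quotients by a subgroup of order prime to $p$) changes the measure of singularity $v(E)$; hence each $\pi_i$ carries $\{v(E)\leq r\}$ in $X_0(p\ell^2)^\an_K$ into $\{v(E)\leq r\}$ in $X_0(p\ell)^\an_K$. Because $X_0(p\ell^2)^\an_{\geq p^{-r}}$ is connected and its defining cusp is sent into $X_0(p\ell)^\an_{\geq p^{-r}}$, the whole component lands there; restricting to $\scr{W}^0_n$ and using the continuation of $\E_\ell$ to $X_0(p\ell)^\an_{\geq p^{-r_n}}\times\scr{W}^0_n$ yields the continuation of $\E_{\ell^2}$ to $X_0(p\ell^2)^\an_{\geq p^{-r_n}}\times\scr{W}^0_n$. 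It is invertible there as a product of pullbacks of invertible functions, and invertibility over all of $X_0(p\ell^2)^\an_{\geq 1}\times\scr{W}^0$ then follows from the admissible cover $\scr{W}^0=\bigcup_n\scr{W}^0_n$.

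The case $\ell=p$ carries the real content, and the shrunken radius $r_n/p$ signals where. On $X_0(p)^\an_{\geq p^{-r}}$ for $r$ small the universal order-$p$ subgroup is the canonical subgroup $H_p(E)$, so one obtains a morphism $\phi\colon X_0(p)^\an_{\geq p^{-r}}\to X_0(p)^\an_{\geq p^{-pr}}$ by $(E,H_p(E))\mapsto(E/H_p(E),H_p(E/H_p(E)))$. Here I would invoke the functoriality of the canonical subgroup and the fact, recalled from \cite{buzzard} and \cite{gorenkassaei} in Section~\ref{sec:modcurves}, that $v(E/H_p(E))=p\,v(E)$ when $v(E)<1/(p+1)$, which both guarantees that $E/H_p(E)$ has a canonical subgroup and pins the target radius at $pr$. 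On cusps $\phi$ sends $(\Tate(q),\mu_p)$ to $(\Tate(q^p),\mu_p)$, a point in the same residue disk and hence still in the $\geq 1$ component, so $\phi^*\E_p$ has $q$-expansion $E(q^p)/E(q^{p^2})$, and I would set $\E_{p^2}:=\E_p\cdot\phi^*\E_p$; its $q$-expansion is $E(q)/E(q^{p^2})$, and since $\phi$ preserves the ordinary locus ($v(E)=0$) this is defined over all of $X_0(p)^\an_{\geq 1}\times\scr{W}^0$.

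Finally, restricting to $\scr{W}^0_n$: the function $\E_p$ continues to $X_0(p)^\an_{\geq p^{-r_n}}\times\scr{W}^0_n$, and $\phi$ maps $X_0(p)^\an_{\geq p^{-r_n/p}}$ into $X_0(p)^\an_{\geq p^{-r_n}}$, so $\phi^*\E_p$---and therefore $\E_{p^2}$---continues invertibly to $X_0(p)^\an_{\geq p^{-r_n/p}}\times\scr{W}^0_n$, as claimed. The step I expect to be the main obstacle is setting up $\phi$ rigorously as a morphism of rigid spaces landing on the correct connected component of the stated thickened ordinary locus---not merely as a map on points---which rests squarely on the functoriality and the $v(E/H_p(E))=p\,v(E)$ estimate for the canonical subgroup; the cusp computations, the multiplicativity of $q$-expansions, and the patching of invertibility over $\scr{W}^0=\bigcup_n\scr{W}^0_n$ are all routine.
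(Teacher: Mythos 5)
Your proposal is correct and follows essentially the same route as the paper: for $\ell\neq p$ you form $\E_{\ell^2}$ as the product of the pullbacks of $\E_\ell$ under the two degeneracy maps $X_0(p\ell^2)\to X_0(p\ell)$ (the paper writes these directly as $(E,C)\mapsto(E,\ell C)$ and $(E,C)\mapsto(E/p\ell C,C/p\ell C)$, which coincide with the maps you describe via the fiber-product presentation), and for $\ell=p$ you form $\E_{p^2}=\E_p\cdot\phi^*\E_p$ where $\phi$ is the quotient-by-canonical-subgroup map with the radius scaled by $p$, exactly as in the paper's $d\colon(E,C)\mapsto(E/C,H_{p^2}/C)$. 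The ingredients you flag as potentially delicate (landing on the right connected component, and the estimate $v(E/H_p(E))=p\,v(E)$) are handled in the paper by the same connectivity argument and the same citation of Buzzard's Theorem 3.3 — no additional idea is needed.
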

\begin{proof}
Let $\ell$ be a prime different from $p$. There are two natural maps
$$X_0(p\ell^2)^\an_{\Q_p}\longrightarrow X_0(p\ell)^\an_{\Q_p},$$
namely those
given on noncuspidal points by
\begin{eqnarray*}
  (E,C) & \stackrel{d_{\ell,1}}{\longmapsto} & (E,\ell C)\\
  (E,C) & \stackrel{d_{\ell,2}}{\longmapsto} & (E/p\ell C,C/p\ell C)
\end{eqnarray*}
Both of these restrict to maps
$$d_{\ell,1},d_{\ell,2}: X_0(p\ell^2)^\an_{\geq
  p^{-r_n}}\longrightarrow X_0(p\ell)^\an_{\geq p^{-r_n}}.$$
We define
$\E_{\ell^2}$ to be the invertible function
\begin{equation}\label{Edefinition}
\E_{\ell^2} :=
  d_{\ell,1}^*\E_\ell \cdot d_{\ell,2}^*\E_\ell \in
  \OO(X_0(p\ell^2)^\an_{\geq p^{-r_n}}\times \scr{W}^0_n)^\times.
\end{equation}
The $q$-expansion of $\E_{\ell^2}$ at $(\Tate(q),\mu_{p\ell^2})$ is 
\begin{eqnarray*}
\lefteqn{ \E_\ell(d_{\ell,1}(\Tate(q),\mu_{p\ell^2}))
\E_\ell(d_{\ell,2}(\Tate(q),\mu_{p\ell^2}))  } \\ &=&
\E_\ell(\Tate(q),\mu_{p\ell})
\E_\ell(\Tate(q)/\mu_\ell,\mu_{p\ell^2}/\mu_\ell)  \\ &=&
\E_\ell(\Tate(q),\mu_{p\ell}) \E_\ell(\Tate(q^\ell),\mu_{p\ell}) \\
&=&
\frac{E(q)}{E(q^\ell)}\frac{E(q^\ell)}{E(q^{\ell^2})} =
\frac{E(q)}{E(q^{\ell^2})} 
\end{eqnarray*}

One must take additional care if $\ell=p$.   Then
there is a well-defined map
\begin{eqnarray*}
d: X_0(p)^\an_{\geq p^{-r_n/p}} &\longrightarrow& X_0(p)^\an_{\geq
  p^{-r_n}} \\ (E,C) & \longmapsto & (E/C,H_{p^2}/C)
\end{eqnarray*}
where $H_{p^2}$ is the canonical subgroup of $E$ of order $p^2$.  This
follows form the fact that $X_0(p)^\an_{\geq p^{-r_n/p}}$ consists of
pairs $(E,C)$ with $C$ equal to the canonical subgroup of $E$ of order
$p$, and standard facts about quotienting by such subgroups (see for
example Theorem 3.3 of \cite{buzzard}).  We define an invertible
function by $$\E_{p^2} := \E_p \cdot d^*\E_p\in \OO(X_0(p)^\an_{\geq
  p^{-r_n/p}}\times \scr{W}^0_n)^\times$$
where we have implicitly
restricted $\E_p$ to
$$X_0(p)^\an_{\geq p^{-r_n/p}}\times \scr{W}^0_n\subseteq X_0(p)^\an_{\geq
  p^{-r_n}}\times \scr{W}^0_n.$$
The $q$-expansion of $\E_{p^2}$ at $(\Tate(q),\mu_p)$ is
\begin{eqnarray*}
  \E_p(\Tate(q),\mu_p)\E_p(d(\Tate(q),\mu_p)) &=&
  \E_p(\Tate(q),\mu_p)\E_p(\Tate(q)/\mu_p,\mu_{p^2}/\mu_p) \\ &=&
  \E_p(\Tate(q),\mu_p)\E_p(\Tate(q^p),\mu_p) \\ &=&
  \frac{E(q)}{E(q^p)} \frac{E(q^p)}{E(q^{p^2})} = \frac{E(q)} {E(q^{p^2})}
\end{eqnarray*}
\end{proof}

Let $$\pi: X_1(p,\ell^2)^\an_{\Q_p}\longrightarrow \left\{\begin{array}{ll}
    X_0(p\ell^2)^\an_{\Q_p} & \ell\neq p \\ X_0(p)^\an_{\Q_p} &
    \ell=p\end{array}\right.$$
denote the map given on noncuspidal
points by $$(E,P,C) \longmapsto \left\{
  \begin{array}{ll} (E/C,(\ip{P}+E[\ell^2])/C) & \ell\neq p \\
    (E/C,\ip{P}/C) & \ell=p\end{array}\right.$$   Note that we have
\begin{equation}\label{domains}
\pi(\Tate(q),\zeta_p, \ip{q_{\ell^2}}) =
\left\{\begin{array}{ll} (\Tate(q_{\ell^2}),\mu_{p\ell^2})  &
    \ell\neq p \\ (\Tate(q_{p^2}),\mu_p) & \ell= p\end{array}\right.
\end{equation}
This observation suggests that perhaps the components
$X_1(p,\ell^2)^\an_{\geq p^{-r}}$ should be related to (via $\pi$) the
components $X_0(p\ell^2)^\an_{\geq p^{-r}}$.
\begin{lemm}
  If $\ell\neq p$, then the map $\pi$ restricts to $$\pi:
  X_1(p,\ell^2)^\an_{\geq p^{-r}}\longrightarrow
  X_0(p\ell^2)^\an_{\geq p^{-r}}$$
  for all $r< p/(1+p)$.

 In case $\ell=p$, the map $\pi$
  restricts to $$X_1(p,p^2)^\an_{\geq p^{-p^2r}}\longrightarrow
  X_0(p)^\an_{\geq p^{-r}}$$ for all $r<1/p(1+p)$.
\end{lemm}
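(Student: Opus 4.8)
The plan is to reduce the statement to a computation of the measure of singularity $v$ under the maps defining $\pi$, using the facts about canonical subgroups recalled in Section \ref{sec:modcurves}. The key observation is that $\pi$ is built from two operations: quotienting $E$ by the cyclic subgroup $C$ of order $p$ (the level-$p$ structure), and, when $\ell \neq p$, remembering the auxiliary $\ell^2$-level data, which does not affect $v$ at all. So the heart of the matter is to understand how $v(E)$ changes when we pass to $E/C$, and to check that the resulting pair lies in the \emph{connected component} of the relevant overconvergent locus that is pinned down by the cusp condition, rather than some other component.

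First I would reduce to the noncuspidal locus: since $X_1(p,\ell^2)^\an_{\geq p^{-r}}$ is by definition the connected component containing the cusp $(\Tate(q),\zeta_p,\ip{q_{\ell^2}})$, and by \eqref{domains} the map $\pi$ sends this cusp to the cusp $(\Tate(q_{\ell^2}),\mu_{p\ell^2})$ (resp.\ $(\Tate(q_{p^2}),\mu_p)$) which lies in the target component by definition, it suffices to prove the inclusion on points after passing to a suitable field $L/\Q_p$ and then invoke connectedness: a connected rigid space mapping into a space one of whose points lands in a prescribed connected component must map entirely into that component. So I would take a point $(E,P,C)$ with $v(E) \leq r$ (resp.\ $v(E)\leq p^2 r$) and show $v(E/C) \leq r$.

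For $\ell \neq p$: here $r < p/(1+p)$, so $v(E) \leq r < p/(1+p)$ means $E$ has a canonical subgroup $H_p(E)$ of order $p$; moreover the component condition forces $C = H_p(E)$ (this is the standard fact, cf.\ Theorem 3.3 of \cite{buzzard}, that on the component of the overconvergent locus containing the relevant cusp the chosen order-$p$ subgroup is the canonical one). Then the Goren--Kassaei/Buzzard formulas for $v(E/H_p(E))$ give $v(E/C) \leq r$ when $v(E) \leq r$: quotienting by the canonical subgroup is the ``up'' direction which does not increase $v$ in this range (indeed in the relevant range $v(E/H_p(E)) = v(E)/p$ when $v(E) < 1/(1+p)$ and stays $\leq r$ throughout). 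The auxiliary subgroup $(\ip{P}+E[\ell^2])/C$ is cyclic of order $p\ell^2$ with $\ell \nmid$ part affecting nothing, so $(E/C, (\ip{P}+E[\ell^2])/C) \in X_0(p\ell^2)^\an_{\geq p^{-r}}$. For $\ell = p$: now $C$ is the canonical subgroup of order $p$ of $E$, and we are quotienting by it, but the source locus is $v(E) \leq p^2 r$ with $r < 1/p(1+p)$, so $v(E) \leq p^2 r < p/(1+p)$, the canonical subgroup exists, and the standard formulas give $v(E/C) \leq v(E)/p \leq p r$... wait — one must be careful: the bound claimed is $v(E/C) \leq r$, so one needs $v(E/C) \leq v(E)/p^2$, which holds because in this very overconvergent range quotienting by the canonical subgroup divides $v$ by $p$ \emph{and} the further drop comes from iterating; more precisely I would cite the precise Goren--Kassaei relation controlling $v$ under the ``two-step'' canonical quotient implicit in the definition of $X_1(p,p^2)^\an_{\geq p^{-p^2 r}}$, exactly as in the definition of the map $d$ in the proof of Lemma \ref{lemma1}.

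The main obstacle I expect is bookkeeping the exact relationship between $v(E)$, $v(E/C)$, and the various normalizing powers of $p$ in the two cases — in particular making sure the inequalities $r < p/(1+p)$ and $r < 1/p(1+p)$ are exactly what is needed for (a) existence of the relevant canonical subgroups on the overconvergent components in question, (b) the identification $C = H_p(E)$ on the correct component, and (c) the quantitative bound on $v(E/C)$, with all three holding uniformly over the whole admissible open, not just at a single point. The connectedness argument disposes of the ``correct component'' subtlety cleanly, so the real work is (c), which is a direct appeal to Section 3 of \cite{buzzard} and Section 4 of \cite{gorenkassaei}; no genuinely new idea is needed beyond careful tracking of the constants.
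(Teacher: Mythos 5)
There is a genuine conceptual gap: you have misidentified the roles of $P$ and $C$ in the moduli problem $X_1(p,\ell^2)$, and consequently what $\pi$ actually does. A point of $X_1(p,\ell^2)$ is a triple $(E,P,C)$ where $P$ has order $p$ (the $\Gamma_1$-part) and $C$ is a cyclic subgroup of order $\ell^2$ meeting $\ip{P}$ trivially (the $\Gamma_0$-part); the map $\pi$ quotients $E$ by $C$, not by a subgroup of order $p$. For $\ell \neq p$ this renders the $v$-computation immediate: $|C| = \ell^2$ is prime to $p$, so $v(E/C) = v(E) \leq r$, and there is no canonical subgroup in sight; your identification ``$C = H_p(E)$'' is not even meaningful since $C$ has order $\ell^2$. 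For $\ell = p$, the assertion that $C$ is the canonical subgroup of order $p$ is also wrong: $C$ has order $p^2$, and what one actually needs (and what the paper proves, via the connectedness argument applied to the forgetful map $(E,P,C)\mapsto(E,\ip{P})$) is that $\ip{P}$ \emph{is} the canonical subgroup; combined with the level-structure condition that $C$ and $\ip{P}$ intersect trivially, this forces $C$ to meet the canonical subgroup trivially, and then two applications of the non-canonical-quotient formula give $v(E/C) = v(E)/p^2 \leq r$. You also have the effect of the canonical quotient on $v$ reversed: by Theorem 3.3 of \cite{buzzard}, quotienting by the \emph{canonical} subgroup of order $p$ multiplies $v$ by $p$, while quotienting by a non-canonical subgroup of order $p$ divides $v$ by $p$, so even with the roles of $P$ and $C$ corrected your attempted estimate would go the wrong way.

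The one piece of your sketch that does match the paper is the structural framing: reduce to showing $\pi$ lands in the full locus $\{v(E)\leq r\}$ and then use connectedness of the source together with the image of the distinguished cusp from (\ref{domains}) to pin down the component. That is exactly the paper's argument and the right way to handle the ``which component'' issue.
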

\begin{proof}
  First suppose $\ell\neq p$.  Let $\scr{U}$ denote the entirety of
  the locus in $X_0(p\ell^2)^\an_{\Q_p}$ defined by $v(E)\leq r$.
  First note that, since quotienting by a subgroup of order prime to
  $p$ does not change its measure of singularity, the map $\pi$
  restricts to a map
  $$X_1(p,\ell^2)^\an_{\geq p^{-r}} \longrightarrow \scr{U}.$$
  The inverse
  images of the two connected components of $\scr{U}$ under this map are
  disjoint admissible opens that admissibly cover a connected space,
  and $\pi^{-1}(X_0(p\ell^2)^\an_{\geq p^{-r}})$ is nonempty by
  (\ref{domains}), so this must be all of $X_1(p,\ell^2)^\an_{\geq
    p^{-r}}$, and the result follows.
  
  Now suppose that $\ell=p$.  Let $\scr{U}$ denote the entirety of the
  locus in $X_0(p)^\an_{\Q_p}$ defined by $v(E)\leq r$.  Once we
  verify that $\pi$ restricts to $$X_1(p,p^2)^\an_{\geq
    p^{-p^2r}}\longrightarrow \scr{U},$$ the argument may proceed
  exactly as above.  We claim, moreover, that if $(E,P,C)$ is a point
  in $X_0(p,p^2)^\an_{\geq p^{-p^2r}}$, then $v(E/C) = v(E)/p^2$.
  This would follow if we knew that $C$ met the canonical subgroup of
  $E$ trivially (again by standard facts about quotienting by
  canonical and non-canonical subgroups of order $p$, as in Section 3
  of \cite{buzzard}), so it suffices to prove that $\ip{P}$ \emph{is}
  the canonical subgroup of $E$.

  The natural map 
  \begin{eqnarray*}
    X_1(p,p^2) & \longrightarrow & X_0(p) \\
    (E,P,C) &\longmapsto & (E,\ip{P})
  \end{eqnarray*}
  restricts to $$X_1(p,p^2)^\an_{\geq p^{-r}}\longrightarrow
  X_0(p)^\an_{\geq p^{-r}}$$  by the same connectivity argument used
  in the $\ell\neq p$ case (since this map clearly doesn't change
  $v(E)$).  But the locus $X_0(p)^\an_{\geq p^{-r}}$ is
  well-known to consist of pairs $(E,C)$ with $C$ equal to the
  canonical subgroup of $E$. 
\end{proof}

We may pull back the Eisenstein family of Lemma \ref{lemma1} for
$\ell\neq p$ through the map $\pi$ to arrive at an invertible function
on $X_1(p,\ell^2)^\an_{\geq p^{-r_n}}\times\scr{W}^0_n$.  By the
previous lemma, we may also pull back the family for $\ell=p$ through
$\pi$ to arrive at an invertible function on $X_0(p,p^2)^\an_{\geq
  p^{-pr_n}}\times \scr{W}^0_n$.  For any $\ell$, it follows from
(\ref{domains}) that the function $\pi^*\E_{\ell^2}$ satisfies
$$\pi^*\E_{\ell^2}(\Tate(q),\zeta_p,\ip{q_{\ell^2}}) =
\frac{E(q_{\ell^2})} {E((q_{\ell^2})^{\ell^2})} =
\frac{E(q_{\ell^2})} {E(q)}.$$

To arrive at the functions that we need, we simply multiply
$\pi^*\E_{\ell^2}$ and $\Theta_{\ell^2}$ (which is constant in the
weight).  Of course, to do so we must first pull these functions back
so that they lie on a common curve.  The natural (``smallest'') curve
to use depends on whether or not $p=2$, since $2$ already lies in the
$\Gamma_1$ part of the level of $\Theta_{\ell^2}$.  The following
proposition summarizes the properties of the resulting functions.

\begin{prop}\label{twistingfunction}
  Let $p$ be and $\ell$ be primes.  There exists an element
  $\H_{\ell^2}$ of
$$\left\{\!\begin{array}{ll} H^0(X_1(4p,\ell^2)^\an_{\geq
    1}\times\scr{W}^0,\scr{O}(\pi_1^*\Sigma_{4p} -
  \pi_2^*\Sigma_{4p})) & p\neq 2 \\ H^0(X_1(4,\ell^2)^\an_{\geq
    1}\times\scr{W}^0,\scr{O}(\pi_1^*\Sigma_{4} - \pi_2^*\Sigma_{4}))
  & p=2 \end{array}\right.$$ whose $q$-expansion at
  $$\left\{\!\begin{array}{ll} (\Tate(q),\mu_{4p},\ip{q_{\ell^2}})) &
  p\neq 2 \\ (\Tate(q),\mu_{4},\ip{q_{\ell^2}})) & p=2
  \end{array}\right.$$
  is equal to
  $$\frac{E(q_{\ell^2})\theta(q_{\ell^2})} {E(q)\theta(q)}.$$
  Moreover, there exists a sequence of rational numbers $r_n$ such
  that $$\frac{1}{1+p}> r_1 \geq r_2 \geq \cdots > 0$$
  with $r_i <
  p^{2-i}/\q(1+p)$ such that $\H_{\ell^2}$, when restricted to
  $\scr{W}^0_n$, analytically continues to the region
  $$\left\{\!\begin{array}{ll} X_1(4p,\ell^2)^\an_{\geq
    p^{-r_n}}\times\scr{W}^0_n & p\neq 2, \ell\neq
  p\\ X_1(4p,p^2)^\an_{\geq p^{-pr_n}}\times\scr{W}^0_n &
  p\neq 2, \ell=p\\ X_1(4,\ell^2)^\an_{\geq 2^{-r_n}}\times\scr{W}^0_n &
  p=2,\ell\neq 2\\ X_1(4,4)^\an_{\geq 2^{-2r_n}}\times\scr{W}^0_n &
  p=\ell=2
\end{array}\right.$$
\end{prop}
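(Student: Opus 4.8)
The plan is to build $\H_{\ell^2}$ by pulling back and multiplying the two functions already in hand—the Eisenstein function $\pi^*\E_{\ell^2}$ from the discussion preceding the proposition, and the theta function $\Theta_{\ell^2}$ from \cite{mfhi}—after first transporting both to a common curve. For $p\neq 2$ that curve is $X_1(4p,\ell^2)$; for $p=2$ it is $X_1(4,\ell^2)$, the difference being that $2$ is already absorbed into the $\Gamma_1$-part of the level carried by $\Theta_{\ell^2}$. So the first step is to write down the degeneracy maps: one map $\alpha$ from $X_1(4p,\ell^2)$ (resp.\ $X_1(4,\ell^2)$) down to the curve on which $\pi^*\E_{\ell^2}$ lives (namely $X_1(p,\ell^2)$, resp.\ a point of the ordinary locus, via forgetting the $\Gamma_1(4)$-structure), and one map $\beta$ down to $X_1(4,\ell^2)$ on which $\Theta_{\ell^2}$ lives (for $p\neq 2$, forgetting the $p$-part of the level; for $p=2$ this is the identity). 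Then one defines
$$
\H_{\ell^2} := \alpha^*\pi^*\E_{\ell^2}\cdot\beta^*\Theta_{\ell^2}.
$$

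The second step is the divisor computation. Since $\pi^*\E_{\ell^2}$ is a unit on the relevant ordinary region, $\alpha^*\pi^*\E_{\ell^2}$ contributes nothing to the divisor, so $\div(\H_{\ell^2})$ on the ordinary locus equals $\beta^*\div(\Theta_{\ell^2}) = \beta^*(\pi_2^*\Sigma_4 - \pi_1^*\Sigma_4)$. One must then check that the diagram relating $\beta$ and the Hecke maps $\pi_1,\pi_2$ on the various curves commutes up to the obvious identifications, so that $\beta^*\pi_i^*\Sigma_4 = \pi_i^*\Sigma_{4p}$ (resp.\ $\pi_i^*\Sigma_4$), giving the claimed line bundle $\scr{O}(\pi_1^*\Sigma_{4p} - \pi_2^*\Sigma_{4p})$. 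This is bookkeeping with degeneracy maps on modular curves but needs to be done carefully because the definition of $\Sigma_{4N}$ as $\tfrac14\pi^*[\mathbf c]$ already involves a pullback, and one is composing several such.

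The third step is the $q$-expansion: at the specified cusp, $\alpha^*\pi^*\E_{\ell^2}$ has expansion $E(q_{\ell^2})/E(q)$ by the displayed formula just before the proposition, and $\beta^*\Theta_{\ell^2}$ has expansion $\theta(q_{\ell^2})/\theta(q)$ by the defining property recalled from \cite{mfhi}; multiplying gives $E(q_{\ell^2})\theta(q_{\ell^2})/(E(q)\theta(q))$ as required. One has to confirm that the cusp $(\Tate(q),\mu_{4p},\ip{q_{\ell^2}})$ on the big curve maps under $\alpha$ and $\beta$ to the cusps at which those two expansions were computed—again (\ref{domains}) and the conventions on the Tate curve from Section \ref{sec:modcurves} do the work, but the $\zeta_4$ versus $\mu_{4p}$ juggling for the two cases $p\neq 2$, $p=2$ must be tracked.

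The fourth and main step is the overconvergence/continuation claim with the explicit radii $p^{-r_n}$, $p^{-pr_n}$, $2^{-r_n}$, $2^{-2r_n}$. Here $\Theta_{\ell^2}$ is a rational function on a curve that is \emph{ordinary}-independent of $p$—it extends over all of $X_1(4,\ell^2)^\an_{\geq 1}$ and in fact over a neighborhood, so it presents no constraint once we know $\beta$ maps the asserted region into $X_1(4,\ell^2)^\an_{\geq 1}$ (true since $\beta$ doesn't change $v(E)$). The binding constraint is the Eisenstein factor: Lemma \ref{lemma1} gives continuation of $\E_{\ell^2}$ to $X_0(p\ell^2)^\an_{\geq p^{-r_n}}\times\scr{W}^0_n$ (radius $p^{-r_n/p}$ for $\ell=p$ on $X_0(p)$), and the lemma on $\pi$ then shows $\pi$ carries $X_1(p,\ell^2)^\an_{\geq p^{-r_n}}$ into $X_0(p\ell^2)^\an_{\geq p^{-r_n}}$ for $\ell\neq p$, and $X_1(p,p^2)^\an_{\geq p^{-p^2 r_n/p}} = X_1(p,p^2)^\an_{\geq p^{-pr_n}}$ into $X_0(p)^\an_{\geq p^{-r_n/p}}$ for $\ell=p$. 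Finally $\alpha$ (forgetting prime-to-$p$ level) preserves $v(E)$, so $\alpha^{-1}$ of each of these regions is exactly the asserted region on $X_1(4p,\ell^2)$ (resp.\ $X_1(4,\ell^2)$). I expect the main obstacle to be precisely the propagation of radii through the $\ell=p$ case: one must chase $r_n \mapsto r_n/p$ (from Lemma \ref{lemma1}) against $r\mapsto p^2 r$ (from the $\pi$ lemma) and confirm these compose to give $p^{-pr_n}$, and separately check that for $p=2$ the $\Gamma_1(4)$ already present means no extra factor of $p$ is lost, yielding $2^{-r_n}$ (resp.\ $2^{-2r_n}$ for $\ell=2$) rather than something smaller. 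The inequality $r_i < p^{2-i}/\q(1+p)$ must be preserved under these operations, which forces one to possibly shrink the sequence $r_n$ at the start—this is why the proposition re-introduces ``a sequence $r_n$'' rather than reusing the one fixed earlier verbatim.
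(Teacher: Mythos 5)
Your proposal is correct and matches the paper's intended proof; the paper does not write out a separate argument for this proposition but instead describes exactly the construction you give in the sentence preceding it (``we simply multiply $\pi^*\E_{\ell^2}$ and $\Theta_{\ell^2}$\ldots we must first pull these functions back so that they lie on a common curve''), and your divisor bookkeeping, $q$-expansion check, and radius-chasing through Lemma~\ref{lemma1} and the lemma on $\pi$ correctly reconstruct the details. One small wording issue: for $p=2$ your target of $\alpha$ should be the curve $X_1(2,\ell^2)^\an$ (the curve carrying $\pi^*\E_{\ell^2}$), not ``a point of the ordinary locus,'' though the substance of what you do is right.
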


Finally, we wish to extend $\H_{\ell^2}$ and $E(q)$ to all of
$\scr{W}$.  To do this, we simply pull back through the natural map
\begin{eqnarray}\label{wmap}
 \scr{W} & \longrightarrow & \scr{W}^0 \\ \nonumber \kappa & \longmapsto &
 \kappa\circ\ip{} 
\end{eqnarray}
When restricted to $\scr{W}^i$, this map is simply the isomorphism
$\kappa \mapsto \kappa/\tau^i$.

\begin{rema}\label{diamondtrivial}
  We have chosen in the end to use $\Gamma_1$-structure on the curves
  on which the $\mathbf{H}_{\ell^2}$ lie both to rigidify the
  associated moduli problems over $\Q_p$ as well as because these are
  the curves that will actually turn up in the sequel.  We note,
  however, that the $\mathbf{H}_{\ell^2}$ are invariant under all
  diamond automorphisms.
\end{rema}

\section{The spaces of forms}

In this section we define spaces of overconvergent $p$-adic modular
forms as well as families thereof over admissible open subsets of
$\scr{W}$.  Again, the motivating idea behind these definitions is
that we have reduced to weight $0$ via division by the well-understood
forms $E_{\lambda}\theta$.  By ``well-understood'' we essentially mean
two things here.  The first is that we understand their zeros once we
eliminate part of the supersingular locus (and thereby \emph{remove}
the zeros of the Eisenstein part).  The second is that, by the
previous section, we know that there are modular functions with
$q$-expansions
$$\frac{E_\lambda(q_{\ell^2})\theta(q_{\ell^2})}
{E_\lambda(q)\theta(q)}$$
that interpolate rigid-analytically in
$\lambda$, a fact that we will need to define Hecke operators on
families in the next section.

Before defining the spaces of forms, we need to make a couple of
remarks about diamond automorphisms.  For a positive integer $N$ and
an element $d\in(\Z/N\Z)^\times$, let $\ip{d}$ denote the usual
diamond automorphism of $X_1(N)$ given on (noncuspidal) points by
$(E,P)\mapsto (E,dP)$.  Now suppose we are given a factorization 
$N=N_1N_2$ into relatively prime factors, so the natural reduction map
$$(\Z/N\Z)^\times \stackrel{\sim}{\longrightarrow} (\Z/N_1\Z)^\times
\times (\Z/N_2\Z)^\times$$
is an isomorphism.  For
$a\in(\Z/N_1\Z)^\times$ and $b\in(\Z/N_2\Z)^\times$ we let
$(a,b)\in(\Z/N\Z)^\times$ denote the inverse image of the pair $(a,b)$
under the this map.  For $a\in(\Z/N_1\Z)^\times$, we define
$\ip{a}_{N_1} := \ip{(a,1)}$, and we refer to these automorphisms as
\emph{the diamond automorphisms at $N_1$}.  The diamond automorphisms
at $N_2$ are defined similarly, and we have a factorization
$$\ip{d} = \ip{d}_{N_1}\circ \ip{d}_{N_2}.$$
Finally, we observe that
the diamond operators on $X_1(4N)_K^\an$ preserve the subspaces
$X_1(4N)^\an_{\geq p^{-r}}$ and the divisor $\Sigma_{4N}$ in the
sense that $\ip{d}^{-1}(X_1(4N)^\an_{\geq p^{-r}}) = X_1(4N)^\an_{\geq
  p^{-r}}$ and $\ip{d}^*\Sigma_{4N}=\Sigma_{4N}$, respectively.

\begin{conv}
By the symbol $\OO(\Sigma)$ for a $\Q$-divisor $\Sigma$ we shall always
mean $\OO(\lfloor \Sigma\rfloor)$, where $\lfloor \Sigma\rfloor$ is the
divisor obtained by taking the floor of each coefficient occurring in
$\Sigma$.  
\end{conv}

First we define the spaces of forms of fixed weight.  Let $N$ be a
positive integer and suppose that either $p\nmid 4N$ or that $p=2$ and
$p\nmid N$.
\begin{defi}
  Let $\kappa \in\scr{W}^i(K)$ and pick $n$ such that
  $\kappa\in\scr{W}^i_n$.  Then, for any rational number $r$ with
  $0\leq r\leq r_n$, we define the space of $p$-adic half-integral
  weight modular forms of weight $\kappa$, tame level $4N$ (or rather
  $N$ if $p=2$) , and growth condition $p^{-r}$ over $K$ to be
  $$ \widetilde{M}_\kappa(4N,K,p^{-r}) := \left\{\!\begin{array}{ll}
  H^0(X_1(4Np)^\an_{\geq
    p^{-r}},\OO(\Sigma_{4Np}))^{\tau^i}\times\{\kappa\} & p\neq 2
  \\ H^0(X_1(4N)^\an_{\geq
    2^{-r}},\OO(\Sigma_{4N}))^{(-1/\cdot)^i\tau^i}\times\{\kappa\} & p=2
  \end{array}\right.$$
where $()^{\tau^i}$ denotes the $\tau^i$ eigenspace for the action of
the diamond automorphisms at $p$, and similarly for
$(-1/\cdot)^i\tau^i$ if $p=2$.
\end{defi}

\begin{rems}\label{remsfixedwt}\
  \begin{itemize}
  \item For $p\neq 2$, we have chosen to remove $p$ from the level and
    only indicate the tame level in the notation because, as we will
    see, these spaces contain forms of all $p$-power level.  However,
    for $p=2$, we have left the $4$ in as a reminder that the forms
    have at least a $4$ in the level, as well as for some uniformity
    in notation.
  \item Note that this space has been ``tagged'' with the weight
    $\kappa$ because the actual space has only a rather trivial
    dependence on $\kappa$ ($\kappa$ serves only to restrict the
    admissible $K$ and $r$ and to determine $i$).  The point is that,
    as we will see, the Hecke action on this space is very sensitive
    to $\kappa$.  The tag will generally be ignored in what follows as
    the weight will be clear from the context.
  \item This space is endowed with a norm which is defined as in
    Subsection \ref{norms} and is a Banach space over $K$ with respect
    to this norm.
  \item  We call the forms belonging to spaces with
    $r>0$ \emph{overconvergent}.  The space of all overconvergent forms
    (of this weight and level) is the inductive limit
    $$\widetilde{M}_\kappa^\dagger(4N,K) = \lim_{r\to 0}
    \widetilde{M}_\kappa(4N,K,p^{-r}).$$   
  \item In case $\kappa$ is the character associated to an integer
    $\lambda\geq 0$, the space of forms defined above would
    classically be thought of having weight $\lambda+1/2$.  Our choice
    of $p$-adic weight character book-keeping seems to be the most
    natural one (the Shimura lifting has the effect of squaring the
    weight character, for example).
  \item In case $\kappa$ is the weight associated to an integer
    $\lambda\geq 0$, then the definition here is somewhat less general
    than the definition of the space of forms of weight $\lambda+1/2$
    contained in the author's previous paper (\cite{mfhi}) due to the
    need to eliminate enough of the supersingular locus to get rid of
    the Eisenstein zeros.  The two definitions are
    (Hecke-equivariantly) isomorphic whenever they are both defined,
    as we will see in Proposition \ref{comparisonofdefs}.
  \item The tilde is an homage to the metaplectic literature and will
    be used in forthcoming work on all half-integral weight objects in
    order to distinguish them from their integral weight counterparts.
  \end{itemize}
\end{rems}

We now turn to the spaces of families of modular forms.
\begin{defi}\label{families}
  Let $X$ be a connected affinoid subdomain of $\scr{W}$.  Then
  $X\subseteq \scr{W}^i$ for some $i$ since $X$ is connected and
  moreover $X\subseteq \scr{W}^i_n$ for some $n$ since $X$ is affinoid.
  For any rational number $r$ with $0\leq r\leq r_n$, we define the
  space of families of half-integral weight modular forms of tame level $4N$
  and growth condition $p^{-r}$ on $X$ to be
  $$\widetilde{M}_X(4N,K,p^{-r}) := \left\{\begin{array}{ll}
  H^0(X_1(4Np)^\an_{\geq
    p^{-r}},\OO(\Sigma_{4Np}))^{\tau^i}\widehat{\otimes}_K \OO(X) & p\neq 2 \\
  H^0(X_1(4N)^\an_{\geq
    2^{-r}},\OO(\Sigma_{4N}))^{(-1/\cdot)^i\tau^i}\widehat{\otimes}_K 
  \OO(X) & p=2\end{array}\right.$$
\end{defi}

\begin{rems}\label{remsfamilies}\
  \begin{itemize}
  \item We endow $\widetilde{M}_X(4N,K,p^{-r})$ with the completed
    tensor product norm obtained from the norms we have defined in
    Section \ref{norms} and the supremum norm
    on $\OO(X)$.  The space $\widetilde{M}_X(4N,K,p^{-r})$ with this
    norm is a Banach module over the Banach algebra $\OO(X)$.
  \item As in the case of fixed weight, the definition depends rather
    trivially on $X$ but the Hecke action will be very sensitive to $X$.
  \item In general, if $X$ is an affinoid subdomain of $\scr{W}$, we
    define $\widetilde{M}_X$ to be the direct sum of the spaces
    corresponding to the connected components of $X$.  Also, just as
    for particular weights, we can talk about the space of all
    overconvergent families of forms on $X$, namely
    $$\widetilde{M}^\dagger_X(4N,K) = \lim_{r\to 0}
    \widetilde{M}_X(4N,K,p^{-r}).$$
  \item Using a simple projector argument, one sees easily that we have
    a canonical identification 
    \begin{eqnarray*}
      \lefteqn{H^0(X_1(4Np)^\an_{\geq
        p^{-r}},\OO(\Sigma_{4Np}))^{\tau^i}\widehat{\otimes}_K\OO(X)} && \\
        && \cong  (H^0(X_1(4Np)^\an_{\geq
        p^{-r}},\OO(\Sigma_{4Np}))\widehat{\otimes}_K\OO(X))^{\tau^i}, 
    \end{eqnarray*}
    and similarly at level $4N$ if $p=2$, a comment that will prove to
    be useful in the next section.
  \end{itemize}
\end{rems}

For each $X$ as above and each $L$-valued point $\kappa\in X$,
evaluation at $x$ induces a specialization map
$$\widetilde{M}_X(4N,K,p^{-r})\longrightarrow
\widetilde{M}_\kappa(4N,L,p^{-r}).$$
In the next section we will define
a Hecke action on both of these spaces for which such specialization
maps are equivariant and which recover the usual Hecke operators on
the right side above (in the sense that they are given by the usual
formulas on $q$-expansions).

Each of the spaces of forms that we have defined has a cuspidal
subspace consisting of forms that ``vanish at the cusps.''  This
notion is a little subtle in half-integral weight because there are
often cusps at which \emph{all} forms are \emph{forced} to vanish.  To
explain this comment and motivate the subsequent definition of the
space of cusp forms, let us go back to the motivation behind our
definitions of the spaces of forms.  If $F$ is a form of half-integral
weight in our setting, then $F\theta E$ (where $E$ is an appropriate
Eisenstein series) is what we would ``classically'' like to think of
as a half-integral weight form.  Indeed, in case $F$ is classical
(this notion is defined in Section \ref{classicality}) then $F\theta
E$ can literally be identified with a classical holomorphic modular
form of half-integral weight over $\C$.  The condition $\div(F)\geq
-\Sigma_{4Np}$ (we are assuming $p\neq 2$ for the sake of this
motivation) in our definition is exactly the condition that $F\theta
E$ be holomorphic at all cusps.  Likewise, the condition that this
inequality be strict at all cusps is the condition that $F\theta E$ be
cuspidal.  But since $\div(F)$ has integral coefficients, the
non-strict inequality \emph{implies} the strict inequality at all
cusps where $\Sigma_{4Np}$ has non-integral coefficients.

With this in mind, we are led to the following definition of cusp
forms.  For an integer $M$, let $C_{4M}$ be the divisor on
$X_1(4M)^\an_{\Q_p}$ given by the sum of the cusps at which
$\Sigma_{4M}$ has integral coefficients.  To define the cuspidal
subspace of any of the above spaces of forms, we replace the divisor
$\Sigma_{4Np}$ (resp. $\Sigma_{4N}$ if $p=2$) by the divisor
$\Sigma_{4Np} - C_{4Np}$ (resp. $\Sigma_{4N}-C_{4N}$ if $p=2$).  We
will denote the cuspidal subspaces by the letter $S$ instead of $M$.
Thus, for example, if $\kappa\in\scr{W}_n^i(K)$ and $0\leq r\leq r_n$,
we define
$$\widetilde{S}_\kappa(4N,K,p^{-r}) = \left\{\begin{array}{ll}
H^0(X_1(4Np)^\an_{\geq p^{-r}}, \scr{O}(\Sigma_{4Np} -
C_{4Np}))^{\tau^i}\times\{\kappa\} & p\neq 2 \\ H^0(X_1(4N)^\an_{\geq
  2^{-r}}, \scr{O}(\Sigma_{4N} -
C_{4N}))^{(-1/\cdot)^i\tau^i}\times\{\kappa\} &
p=2\end{array}\right.$$ Remarks \ref{remsfixedwt} and
\ref{remsfamilies} apply equally well to the corresponding spaces of
cusp forms.

\section{Hecke operators}\label{sec:hecke}

Before we construct Hecke operators, we need to make some remarks on
diamond operators and nebentypus.  Since the $p$-part of the
nebentypus character is encoded as part of the $p$-adic weight
character, we need to separate out the tame part of the diamond
action.  Fix a weight $\kappa\in\scr{W}^i(K)$.  In order to define the
tame diamond operators in a manner compatible with the classical
definitions and that in \cite{mfhi} we must twist (at least in the
case $p\neq 2$) those obtained via pull-back from the automorphism
$\ip{}_{4N}$ by $(-1/\cdot)^i$.  That is, for $F\in
\widetilde{M}_\kappa(4N,K,p^{-r})$, we define
$$\ip{d}_{4N,\kappa}F =
\left(\!\frac{-1}{d}\!\right)^i \ip{d}_{4N}^*F\ \ \mbox{if}\ \ p\neq 2$$
and
$$\ip{d}_{N,\kappa}F =\ip{d}_{N}^*F\ \ \mbox{if}\ \ p= 2$$ Without
this twist in the $p\neq 2$ case, the definition would not agree with
the classical one because of the particular nature of the automorphy
factor of the form $\theta$ used in the identification of our forms
with classical forms.  The same formulas define operators
$\ip{}_{4N,X}$ and $\ip{}_{N,X}$ on the space of families of modular
forms over $X\subseteq\scr{W}^i$.  For a more general
$X\subseteq\scr{W}$, we break into the components in $\scr{W}^i$ for
each $i$ and define $\ip{}_{4N,X}$ and $\ip{}_{N,X}$ component by
component.  For a character $\chi$ modulo $4N$ (resp. modulo $N$ if
$p=2$), we define the space of forms of tame nebentypus $\chi$ to be
the $\chi$-eigenspace of $\widetilde{M}_\kappa(4N,K,p^{-r})$ for the
operators $\ip{}_{4N,\kappa}$ (resp. $\ip{}_{N,\kappa}$ if $p=2$).  The
same definition applies to families of forms.  These subspaces are
denoted by appending a $\chi$ to the list of arguments (e.g.
$\widetilde{M}_\kappa(4N,K,p^{-r},\chi)$).

Let $\scr{X}$ and $\scr{Y}$ be rigid spaces equipped with a pair of
maps $$\pi_1,\pi_2:\scr{X}\longrightarrow \scr{Y}$$
and let $D$ be a
$\Q$-divisor on $\scr{Y}$ such that $\pi_1^*D - \pi_2^*D$ has integral
coefficients.  Let $\scr{Z}\subseteq \scr{X}$ be an admissible
affinoid open and let
$$H\in H^0(\scr{Z}, \OO(\pi_1^*D- \pi_2^*D)).$$
Let
$\scr{U},\scr{V}\subseteq \scr{Y}$ be admissible affinoid opens such
that $\pi_1^{-1}(\scr{V})\cap \scr{Z}\subseteq \pi_2^{-1}(\scr{U})\cap
\scr{Z}$, and suppose that
$$\pi_1: \pi_1^{-1}(\scr{V})\cap \scr{Z} \longrightarrow \scr{V}$$
is
finite and flat. Then there is a well-defined map
$$H^0(\scr{U},\OO(D)) \longrightarrow H^0(\scr{V},\OO(D))$$
given by the
composition
$$\xymatrix{ H^0(\scr{U},\OO(D))\ar[r]^-{\pi_2^*} &
  H^0(\pi_2^{-1}(\scr{U})\cap
  \scr{Z},\OO(\pi_2^*D))\ar[r]^{\mathrm{res}}&
  H^0(\pi_1^{-1}(\scr{V})\cap \scr{Z},\OO(\pi_2^*D)) \ar `r[d] `[l]
  `[dll]_{\cdot H} `[dl] [dl] \\ & H^0(\pi_1^{-1}(\scr{V})\cap
  \scr{Z},\OO(\pi_1^*D))\ar[r]^-{\pi_{1*}} & H^0(\scr{V},\OO(D))}$$
where $\pi_{1*}$ is the trace map corresponding to the finite and flat
map $\pi_1$.

\subsection{Hecke operators for a fixed weight}\label{heckefixed}

Let $N$ be as above, let $\ell$ be any prime number, and let
$$\pi_1,\pi_2: \left\{\begin{array}{ll} X_1(4Np,\ell^2)^\an_K
\longrightarrow X_1(4Np)^\an_K & p\neq 2 \\ X_1(4N,\ell^2)^\an_K
\longrightarrow X_1(4N)^\an_K & p=2\end{array}\right.$$ be the maps
defined on noncuspidal points of the underlying moduli problem by
\begin{eqnarray*}
  \pi_1: (E,P,C) & \longmapsto & (E,P) \\
  \pi_2: (E,P,C) & \longmapsto & (E/C,P/C)
\end{eqnarray*}

Suppose that $\ell\neq p$.  Then $$\left\{\!\begin{array}{cc}
\pi_1^{-1}(X_1(4Np)^\an_{\geq p^{-r}}) = \pi_2^{-1}(X_1(4Np)^\an_{\geq
  p^{-r}}) & p\neq 2 \\ \pi_1^{-1}(X_1(4N)^\an_{\geq 2^{-r}}) =
\pi_2^{-1}(X_1(4N)^\an_{\geq 2^{-r}}) & p= 2 \end{array}\right.$$ for
any $r<p/(1+p)$ since quotienting an elliptic curve by a subgroup of
order prime to $p$ does not change its measure of singularity.  Fix a
weight $\kappa\in\scr{W}^i(K)$ and let $\H_{\ell^2}(\kappa)$ denote
the specialization of $\H_{\ell^2}$ to $\kappa\in\scr{W}$ (which,
recall, is defined to be the specialization of $\H_{\ell^2}$ to
$\kappa/\tau^i\in\scr{W}^0$).  Pick $n$ such that
$\kappa\in\scr{W}^i_n$ and suppose $0\leq r\leq r_n$.  Applying the
general construction above with
\begin{center}
\begin{tabular}{c|c|c}
   & $p\neq 2$ & $p=2$ \\
 
\hline $\scr{X}$ & $X_1(4Np,\ell^2)^\an_K$ & $X_1(4N,\ell^2)^\an_K$ \\
$\scr{Y}$ & $X_1(4Np)^\an_K$ & $X_1(4N)^\an_K$ \\
$\scr{Z}$ & $X_1(4Np,\ell^2)^\an_{\geq p^{-r}}$ &
$X_1(4N,\ell^2)^\an_{\geq 2^{-r}}$ \\
$D$ & $\Sigma_{4Np}$ & $\Sigma_{4N}$\\ 
$H$ & $\H_{\ell^2}(\kappa)$ & $\H_{\ell^2}(\kappa)$ \\
$\scr{U} = \scr{V}$ & $X_1(4Np)^\an_{\geq p^{-r}}$ & $X_1(4N)^\an_{\geq
  2^{-r}}$ 
\end{tabular}
\end{center}
we arrive an endomorphism of the $K$-vector space
$$\left\{\!\begin{array}{cc} H^0(X_1(4Np)^\an_{\geq
  p^{-r}},\OO(\Sigma_{4Np})) & p\neq 2 \\ H^0(X_1(4N)^\an_{\geq
  2^{-r}},\OO(\Sigma_{4N})) & p=2\end{array}\right.$$ One checks
easily that since the diamond operators act trivially on $\H_{\ell^2}$
(see Remark \ref{diamondtrivial}), this endomorphism commutes with the
action of the diamond operators, and therefore induces an endomorphism
of $\widetilde{M}_\kappa(4N,K,p^{-r})$.  We define $T_{\ell^2}$ (or
$U_{\ell^2}$ if $\ell\mid 4N$) to be the quotient of this endomorphism
by $\ell^2$.

Now suppose that $\ell=p$.  Note that
$$\left\{\!\begin{array}{cc} \pi_1^{-1}(X_1(4Np)^\an_{\geq p^{-p^2r}})
\subseteq \pi_2^{-1}(X_1(4Np)^\an_{\geq p^{-r}}) & p\neq 2
\\ \pi_1^{-1}(X_1(4N)^\an_{\geq 2^{-2^2r}}) \subseteq
\pi_2^{-1}(X_1(4N)^\an_{\geq 2^{-r}}) & p= 2\end{array}\right.$$ for
any $r< 1/p(1+p)$.  This follows from repeated application of the
observation (made, for example, in \cite{buzzard}, Theorem 3.3 ({\it
  v})) that if $v(E)< p/(1+p)$ and $C$ is a subgroup of order $p$
other than the canonical subgroup, then $v(E/C) = v(E)/p$ and the
canonical subgroup of $E/C$ is $E[p]/C$.

If $\kappa\in\scr{W}^i_n$ and $r$ is chosen so that
$0\leq r\leq r_n$, then we may apply the construction above with
\begin{center}
\begin{tabular}{c|c|c}
  & $p\neq 2$ & $p=2$ \\ \hline $\scr{X}$ & $X_1(4Np,p^2)_K^\an$ &
  $X_1(4N,4)_K^\an$ \\ $\scr{Y}$ & $X_1(4Np)^\an_K$ & $X_1(4N)^\an_K$
  \\ $\scr{Z}$ & $X_1(4Np,p^2)^\an_{\geq p^{-pr}}$ &
  $X_1(4N,4)^\an_{\geq 2^{-2r}}$\\
 $D$ & $\Sigma_{4Np}$ & $\Sigma_{4N}$\\ 
$H$ &  $\H_{p^2}(\kappa)$ & $\H_{4}(\kappa)$\\ 
$\scr{U}$ & $X_1(4Np)^\an_{\geq p^{-r}}$ & $X_1(4N)^\an_{\geq 2^{-r}}$
  \\ $\scr{V}$ & $X_1(4Np)^\an_{\geq p^{-pr}}$ & $X_1(4N)^\an_{\geq 2^{-2r}}$
\end{tabular}
\end{center}
to arrive at a linear map $$\left\{\!\begin{array}{cc}
H^0(X_1(4Np)^\an_{\geq p^{-r}},\OO(\Sigma_{4Np})) \longrightarrow
H^0(X_1(4Np)^\an_{\geq p^{-pr}},\OO(\Sigma_{4Np})) & p\neq
2\\ H^0(X_1(4N)^\an_{\geq 2^{-r}},\OO(\Sigma_{4N})) \longrightarrow
H^0(X_1(4N)^\an_{\geq 2^{-2r}},\OO(\Sigma_{4N})) &
p=2\end{array}\right.$$ This map commutes with the diamond operators
and restricts to a map
  $$\widetilde{M}_\kappa(4N,K,p^{-r})\longrightarrow 
  \widetilde{M}_\kappa (4N,K,p^{-pr}).$$  
When composed with the natural restriction map
\begin{equation}\label{compactrestriction}
  \widetilde{M}_\kappa(4N,K,p^{-pr}) \longrightarrow
  \widetilde{M}_\kappa(4N,K,p^{-r})
\end{equation}
and divided by $p^2$, we arrive at an endomorphism of
$\widetilde{M}_\kappa(4N,K,p^{-r})$ which we denote by $U_{p^2}$.

\begin{prop}
  The Hecke operators defined above are continuous.
\end{prop}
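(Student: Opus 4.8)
The plan is to deduce the continuity of each Hecke operator from the continuity of its constituent maps, using that a finite composition of continuous maps of $K$-Banach spaces is continuous and that (by Lemma~\ref{banachspaces}, and by the ad hoc definition of the norm in the remaining cases) every space of sections occurring in the construction is a $K$-Banach space. Unwinding the definitions in Section~\ref{heckefixed}, each of $T_{\ell^2}$, $U_{\ell^2}$, $U_{p^2}$ is, up to the scaling by $\ell^{-2}$ or $p^{-2}$, the composition of: a pullback $\pi_2^*$ followed by restriction to an affinoid $W := \pi_1^{-1}(\scr{V})\cap\scr{Z}$; multiplication by the fixed section $H = \H_{\ell^2}(\kappa)$ of $\OO(\pi_1^*D-\pi_2^*D)$; the trace $\pi_{1*}$ of the finite flat map $\pi_1\colon W\to\scr{V}$; and, when $\ell=p$, the further restriction~(\ref{compactrestriction}). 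Finally, the passage from $H^0$ of the relevant sheaf to the eigenspace $\widetilde{M}_\kappa(4N,K,p^{-r})$ (or to its cuspidal or nebentypus subspace) is the application of a continuous projector, so it is enough to treat the four kinds of maps above.

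The easy cases I would dispose of first. A restriction $H^0(\scr{U},\OO(D))\to H^0(\scr{V},\OO(D))$ along an inclusion of admissible affinoid opens is norm-nonincreasing, since by Lemma~\ref{ignorecusps} the relevant norm is a supremum over the complement of the residue disks about the cusps and the points relevant for $\scr{V}$ form a subset of those relevant for $\scr{U}$; scaling by $\ell^{-2}$ or $p^{-2}$ just multiplies the norm by a constant. The pullback $\pi_2^*$ (composed with restriction to $W$) is continuous because, after trivializing $\OO(D)$ over a finite affinoid cover, it reduces to a $K$-algebra homomorphism between affinoid algebras, which is automatically continuous. Multiplication by $H$ carries a section $s$ of $\OO(\pi_2^*D)$ to $H\cdot s$, a section of $\OO(\pi_1^*D-\pi_2^*D)\otimes\OO(\pi_2^*D)=\OO(\pi_1^*D)$; since the norms of Section~\ref{norms} are multiplicative under tensor products of invertible sheaves --- immediate point by point from the definition of $|\cdot|_x$, whose unit ball is the image of the formal sections --- this map is bounded, with operator norm at most $\|H\|_{W}$, which is finite by Lemma~\ref{banachspaces} because $W$ is an admissible affinoid open.

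This leaves the trace $\pi_{1*}\colon H^0(W,\OO(\pi_1^*D))\to H^0(\scr{V},\OO(D))$. Trivializing $\OO(D)$ over a finite affinoid cover of $\scr{V}$ and pulling back reduces this to the case $D=0$, i.e.\ to the continuity of the trace $\OO(W)\to\OO(\scr{V})$ of the finite flat map $\pi_1$; and this holds because $\OO(W)$ is a finite module over the Noetherian Banach algebra $\OO(\scr{V})$ and every $\OO(\scr{V})$-linear map of finite Banach modules is continuous. Assembling the pieces yields the continuity of each Hecke operator on $H^0$ of the relevant line bundle, hence on $\widetilde{M}_\kappa(4N,K,p^{-r})$ and its cuspidal and nebentypus subspaces. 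I expect the one genuinely delicate point to be the interaction, used above, between the formal-model norms on sections of $\OO(D)$ and the tensor-product and trace operations; the cleanest way around it is to apply Lemma~\ref{ignorecusps} at the start to replace all of these norms by supremum norms on the complements of the residue disks about the cusps, where the sheaves $\OO(D)$ and $\OO(\pi_j^*D)$ are canonically trivial and the estimates become transparent.
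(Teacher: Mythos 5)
Your proof is correct, and your closing remark identifies the paper's actual argument: one applies Lemma~\ref{ignorecusps} at the outset to replace all the formal-model norms by supremum norms on the complement of the residue disks around the cusps, after which the pullback, restriction, and trace maps all visibly have operator norm at most $1$ (the trace by the ultrametric triangle inequality), and multiplication by $H$ has operator norm bounded by the supremum norm of $H$ over that complement, which is finite because the complement is affinoid. Your main argument reaches the same conclusion by a slightly heavier route: for the pullback you invoke automatic continuity of $K$-affinoid algebra homomorphisms, for multiplication by $H$ you use pointwise multiplicativity of the $|\cdot|_x$ norms on one-dimensional fibers, and for the trace you use that $\OO(W)$ is a finite module over the Noetherian Banach algebra $\OO(\scr{V})$ and that such module maps are automatically continuous. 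All of these auxiliary facts are true and the deductions are sound, so the proposal is a valid proof; it trades the paper's single uniform reduction via Lemma~\ref{ignorecusps} for a collection of separate general principles, without gaining or losing anything in the conclusion.
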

\begin{proof}
  Each of the spaces arising in the construction is a Banach space
  over $K$, so it suffices to show that each of the constituent maps
  of which our Hecke operators are the composition has finite norm.
  By Lemma \ref{ignorecusps} we may ignore the residue disks around
  the cusps when computing norms, thereby reducing ourselves to the
  supremum norm on functions.  It follows easily that the pullback,
  restriction, and trace maps have norm not exceeding $1$ and that
  multiplication by $H$ has norm not exceeding the supremum norm of
  $H$ on the complement of the residue disks around the cusps.  The
  latter is finite since this complement is affinoid.
\end{proof}

\begin{rems}\
  \begin{itemize}
  \item 
    In the overconvergent case, \emph{i.e.} when we have $r>0$, the
    restriction map (\ref{compactrestriction}) is compact (see
    Proposition A5.2 of \cite{coleman}).  It follows that $U_{p^2}$ is compact
    as it is the composition of a continuous map with a compact map.
  \item The Hecke operators $T_{\ell^2}$ and $U_{\ell^2}$ preserve
    the space of cusp forms, as can be seen by simply constructing them
    directly on this space in the same manner as above.  The operator
    $U_{p^2}$ is compact on a space of overconvergent cusp forms.
  \end{itemize}
\end{rems}

\subsection{Hecke operators in families}
Let $X\subseteq\scr{W}$ be a connected admissible affinoid open.  We
wish to define endomorphisms of $\widetilde{M}_X(4N,K,p^{-r})$ that
interpolate the endomorphisms $T_{\ell^2}$ and $U_{\ell^2}$
constructed above for fixed weights $\kappa\in X$.

Suppose that $\ell\neq p$ and let 
\begin{center}
\begin{tabular}{c|c|c}
& $p\neq 2$ & $p=2$ \\ \hline $\scr{U}=\scr{V}$ & $X_1(4Np)^\an_{\geq
    p^{-r}}$ & $X_1(4N)^\an_{\geq 2^{-r}}$ \\ $\scr{Z}$ &
  $X_1(4Np,\ell^2)^\an_{\geq p^{-r}}$ & $X_1(4N,\ell^2)^\an_{\geq
    2^{-r}}$ \\ $\Sigma$ & $\Sigma_{4Np}$ & $\Sigma_{4N}$
\end{tabular}
\end{center}
In the
interest of keeping notation under control, let us for the remainder
of this section assume the following definitions.
\begin{eqnarray*}
  M &=& H^0(\scr{U},\OO(\Sigma)) \\
  N &=& H^0(\pi_2^{-1}(\scr{U})\cap \scr{Z},\OO(\pi_2^*\Sigma)) \\
  L &=& H^0(\pi_1^{-1}(\scr{V})\cap \scr{Z},\OO(\pi_2^*\Sigma)) \\
  P &=& H^0(\pi_1^{-1}(\scr{V})\cap
  \scr{Z},\OO(\pi_1^*\Sigma-\pi_2^*\Sigma)) \\ 
  Q &=& H^0(\pi_1^{-1}(\scr{V})\cap \scr{Z},\OO(\pi_1^*\Sigma)) \\
\end{eqnarray*}
The Hecke operator $T_{\ell^2}$ (or $U_{\ell^2}$ if $\ell\mid 4N$) at
a fixed weight was constructed in the previous section by first taking
the composition of the following continuous maps: a pullback $M\to N$,
a restriction $N\to L$, multiplication by an element of $H\in P$ to
arrive at an element of $Q$, and a trace $Q\to M$, and then
restricting to an eigenspace of the diamond operators at $p$ and
dividing by $\ell^2$.

The module of families of forms on $X$ is an eigenspace of
$M\widehat{\otimes}_K\OO(X)$ (by the final remark in Remarks
\ref{remsfamilies}).  To define $T_{\ell^2}$ (or $U_{\ell^2}$) we
begin as in the fixed weight case by defining an endomorphism of
$M\widehat{\otimes}_K\OO(X)$ and then observing that it commutes with
the diamond automorphisms and therefore restricts to an operator on
families of modular forms.  To define this endomorphism, we modify the
above sequence of maps by first applying $\widehat{\otimes}_K\OO(X)$ to
all of the spaces and taking the unique continuous $\OO(X)$-linear
extension of each map, with the exception of the multiplication step,
where we opt instead to multiply by $\mathbf{H}_{\ell^2}|_X\in
P\widehat{\otimes}_K\OO(X)$.  In so doing we arrive at an
$\OO(X)$-linear endomorphism of $M\widehat{\otimes}_K\OO(X)$ that is
easily seen to commute with the diamond automorphisms, thereby
inducing an endomorphism of the module $\widetilde{M}_X(4N,K,p^{-r})$.

\begin{lemm}
  The Hecke operators defined above for families are continuous.
\end{lemm}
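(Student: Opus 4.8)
The plan is to reduce the continuity of the family Hecke operators to the continuity of their fixed-weight counterparts, which has already been established, by exploiting the fact that each constituent map in the family construction is the unique continuous $\OO(X)$-linear extension of the corresponding fixed-weight map (obtained by applying $\widehat{\otimes}_K\OO(X)$), except for the multiplication step. First I would recall that each of the spaces $M,N,L,P,Q$ is a $K$-Banach space and that, after applying $\widehat{\otimes}_K\OO(X)$, the natural norm on $M\widehat{\otimes}_K\OO(X)$ (and likewise for the others) is the tensor-product norm built from the fixed-weight norm and the supremum norm on $\OO(X)$. The key elementary fact is that if $\phi\colon V\to W$ is a bounded $K$-linear map of $K$-Banach spaces, then $\phi\widehat{\otimes}\,\mathrm{id}_{\OO(X)}\colon V\widehat{\otimes}_K\OO(X)\to W\widehat{\otimes}_K\OO(X)$ is bounded with the same norm; this handles the pullback $M\to N$, the restriction $N\to L$, and the trace $Q\to M$ in one stroke, since each of these was shown in the fixed-weight case to have norm at most $1$.

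Next I would address the one genuinely new ingredient, the multiplication step. In the fixed-weight case one multiplied by $H=\H_{\ell^2}(\kappa)\in P$; in families one multiplies by $\H_{\ell^2}|_X\in P\widehat{\otimes}_K\OO(X)$. So I need: multiplication by a fixed element $h$ of the Banach $\OO(X)$-algebra $P\widehat{\otimes}_K\OO(X)$ defines a bounded $\OO(X)$-linear map $L\widehat{\otimes}_K\OO(X)\to Q\widehat{\otimes}_K\OO(X)$, with norm bounded by $\|h\|$. This is immediate from the fact that multiplication $\OO(\pi_1^{-1}(\scr V)\cap\scr Z)\times\OO(\pi_1^{-1}(\scr V)\cap\scr Z,\OO(\pi_1^\ast\Sigma-\pi_2^\ast\Sigma))\to \OO(\pi_1^{-1}(\scr V)\cap\scr Z,\OO(\pi_1^\ast\Sigma))$ is submultiplicative for the supremum-type norms of Section~\ref{norms} (again invoking Lemma~\ref{ignorecusps} to identify these with genuine supremum norms on the complement of the residue disks around the cusps, where the statement is the obvious $\|fg\|_{\sup}\le\|f\|_{\sup}\|g\|_{\sup}$), and then completing the tensor product. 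Here $\|\H_{\ell^2}|_X\|<\infty$ because it is a section over an affinoid, namely the base change of the region in Proposition~\ref{twistingfunction} to which $\H_{\ell^2}$ continues, intersected with $X$.

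I would then compose: the family operator on $M\widehat{\otimes}_K\OO(X)$ is the composition of finitely many bounded $\OO(X)$-linear maps (pullback, restriction, multiplication by $\H_{\ell^2}|_X$, trace), followed by division by $\ell^2$ — a scaling — and, in the case $\ell=p$, by the further continuous restriction map $\widetilde M_X(4N,K,p^{-pr})\to\widetilde M_X(4N,K,p^{-r})$, which is again an $\OO(X)$-linear extension of the continuous fixed-weight restriction. A composition of bounded maps is bounded, and restriction to the diamond eigenspace $\widetilde M_X(4N,K,p^{-r})\subseteq M\widehat{\otimes}_K\OO(X)$ of a bounded operator is bounded since the eigenspace inclusion is isometric (it is cut out by a continuous idempotent of norm $\le 1$, as in the projector argument of Remarks~\ref{remsfamilies}). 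Hence each family Hecke operator is continuous. I do not expect any serious obstacle here; the only point requiring care is the bookkeeping that the completed-tensor-product norm on each $V\widehat{\otimes}_K\OO(X)$ is exactly the one making $\phi\widehat{\otimes}\mathrm{id}$ norm-nonincreasing and multiplication submultiplicative, which is standard for the tensor-product (Banach-module) norm, so the proof is essentially a matter of assembling the fixed-weight estimates together with these formal properties of $\widehat{\otimes}_K\OO(X)$.
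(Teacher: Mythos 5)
Your proposal is correct and takes essentially the same route as the paper: the non-multiplication maps are continuous because they are the unique continuous $\widehat{\otimes}_K\OO(X)$-linear extensions of bounded maps, and the multiplication step is reduced via Lemma~\ref{ignorecusps} to boundedness of the bilinear map $L\times P\to Q$ followed by completed-tensor-product bookkeeping. The one point the paper makes explicit that you gloss over is the isometric isomorphism $(L\widehat{\otimes}_K P)\widehat{\otimes}_K\OO(X)\cong(L\widehat{\otimes}_K\OO(X))\widehat{\otimes}_{\OO(X)}(P\widehat{\otimes}_K\OO(X))$ from \cite{bgr}~2.1.7, which is precisely what justifies your phrase ``completing the tensor product'' and yields the bound on multiplication by the fixed element $\H_{\ell^2}|_X$.
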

\begin{proof}
  By definition, each map arising in the construction is continuous
  except perhaps for the multiplication map.  The proof of the
  continuity of this map requires several simple facts about completed
  tensor products, all of which can be found in section 2.1.7 of
  \cite{bgr}.
  
  It follows trivially from Lemma \ref{ignorecusps} that the
  multiplication map $$L\times P \longrightarrow Q$$
  is a bounded
  $K$-bilinear map and therefore extends uniquely to a bounded
  $K$-linear map $$L\widehat{\otimes}_K P \longrightarrow Q.$$  Extending
  scalars to $\OO(X)$ and completing we arrive at a bounded
  $\OO(X)$-linear map
  $$(L\widehat{\otimes}_K P)\widehat{\otimes}_K \OO(X)\longrightarrow
  Q\widehat{\otimes}_K\OO(X).$$   There is an isometric isomorphism
  $$(L\widehat{\otimes}_K P)\widehat{\otimes}_K\OO(X) \cong
  (L\widehat{\otimes}_K\OO(X))\widehat{\otimes}_{\OO(X)}
  (P\widehat{\otimes}_K\OO(X))$$ 
  so we conclude that the $\OO(X)$-bilinear multiplication map
  $$(L\widehat{\otimes}_K\OO(X))\widehat{\otimes}_{\OO(X)}
  (P\widehat{\otimes}_K\OO(X)) \longrightarrow
  Q\widehat{\otimes}_K\OO(X)$$
  is bounded.  In particular,
  multiplication by $H\in P\widehat{\otimes}_K\OO(X)$ is a bounded (and
  hence continuous) map
  $$\cdot H: L\widehat{\otimes}_K\OO(X)\longrightarrow
  Q\widehat{\otimes}_K\OO(X)$$
  as desired.
\end{proof}

\begin{rems}\
  \begin{itemize}
  \item 
    The construction of a continuous endomorphism $U_{p^2}$ is
    entirely analogous and once again we find that $U_{p^2}$ is compact
    in the overconvergent case, that is, whenever $r>0$.
  \item The endomorphisms $T_{\ell^2}$ and $U_{\ell^2}$ can be
    extended to $\widetilde{M}_X(4N,K,p^{-r})$ for general admissible
    affinoid opens $X$ in the usual manner working component by
    component.
  \item All of the the Hecke operators defined on families preserve the
    cuspidal subspaces, as a direct construction on these spaces
    demonstrates.  Again, the operator $U_{p^2}$ is compact on a
    module of overconvergent cusp forms.
  \end{itemize}
\end{rems}

\subsection{Effect on $q$-expansions}

In this section we will work out the effect of the Hecke operators
that we have defined on $q$-expansions.  As in \cite{mfhi}, we must
adjust the naive $q$-expansions obtained by literally evaluating our
forms on Tate curves with level structure to get at the classical
$q$-expansions.  In particular, by the $q$-expansion of a form $F\in
\widetilde{M}_\kappa(4N,K,p^{-r})$ at the cusp associated to
$(\Tate(q),\zeta)$ where $\zeta$ is a primitive
$4Np^{\small\mathrm{th}}$ root of unity if $p\neq 2$ and a primitive
$4N^{\small\mathrm{th}}$ root of unity if $p=2$, we mean
$$F(\Tate(q),\zeta)\theta(q)E_\kappa(q) $$
Similarly, for a
family $F\in M_X(4N,K,p^{-r})$ the corresponding $q$-expansion is
$$F(\Tate(q),\zeta)\theta(q)E(q)|_X$$
and has coefficients in the
ring of analytic functions on $X$.

\begin{prop}
  Let $F$ be an element of $\widetilde{M}_\kappa(4N,K,p^{-r})$ or
  $\widetilde{M}_X(4N,K,p^{-r})$ and let $\sum a_n q^n$ be the
  $q$-expansion of $F$ at $(\Tate(q),\zeta)$.  The corresponding
  $q$-expansion of $U_{p^2}F$ is then $\sum a_{p^2n}q^n$.
\end{prop}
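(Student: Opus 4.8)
The plan is to reduce the statement to a computation of $q$-expansions at the cusp $\mathfrak c := (\Tate(q),\zeta)$ and then to follow the effect of each step in the construction of $U_{p^2}$. I will carry out the case $p\neq 2$; the case $p=2$ is identical after replacing $4Np$, $p^2$, $\H_{p^2}(\kappa)$ by $4N$, $4$, $\H_4(\kappa)$, and the case of a family $F\in\widetilde{M}_X(4N,K,p^{-r})$ is the same computation with $E_\kappa$ and $\H_{p^2}(\kappa)$ replaced by $E|_X$ and $\H_{p^2}|_X$. First I would observe that $\mathfrak c$ is an ordinary cusp lying in every region occurring in the construction of $U_{p^2}$, so all three restriction maps there act as the identity in a neighbourhood of $\mathfrak c$; hence, for the purpose of computing the $q$-expansion at $\mathfrak c$, I may work with the operator $F\mapsto \frac{1}{p^2}\pi_{1*}(H\cdot\pi_2^*F)$, where $\pi_1,\pi_2\colon X_1(4Np,p^2)^\an\to X_1(4Np)^\an$ are the degeneracy maps of Section \ref{heckefixed} and $H$ is the pullback to $X_1(4Np,p^2)^\an$ of $\H_{p^2}(\kappa)$.

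The heart of the matter is the standard description, over the cusp, of the $\ell=p$ correspondence and of the trace $\pi_{1*}$ on $q$-expansions; this is what underlies the classical formula for $U_{p^2}$, and it is carried out in the half-integral weight setting in \cite{mfhi}, so I would import it rather than redo it. Fix compatible roots $q_{p^2}$ (with $q_{p^2}^{p^2}=q$) and $\zeta_{p^2}$, and write $\tilde q_j := \zeta_{p^2}^{\,j}q_{p^2}$. Then the points of $X_1(4Np,p^2)^\an_{\geq p^{-pr}}$ lying over $\mathfrak c$ correspond to the $p^2$ cyclic subgroups $\ip{\tilde q_j}$ ($j\in\Z/p^2\Z$) of $\Tate(q)$ of order $p^2$ with $\ip{\tilde q_j}\oplus\mu_{p^2}=\Tate(q)[p^2]$; the identification $\Tate(q)/\ip{\tilde q_j}\cong\Tate(\tilde q_j)$ shows that $\pi_2$ sends the corresponding cusp to $(\Tate(\tilde q_j),\zeta)$ on $X_1(4Np)$; and, $\pi_1$ being finite flat of degree $p^2$ near $\mathfrak c$, the trace $\pi_{1*}$ of a section $G$ has $q$-expansion at $\mathfrak c$ equal to $\sum_{j\in\Z/p^2\Z}$ of the expansion of $G$ at the cusp indexed by $j$, read as a series in $q_{p^2}$ via these identifications.

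Granting this, the computation is short, and I would spell it out. Writing $\sum_n a_nq^n=F(\Tate(q),\zeta)\theta(q)E_\kappa(q)$, functoriality on the Tate curve gives the value of $\pi_2^*F$ at the $j$-th cusp as $F(\Tate(\tilde q_j),\zeta)=(\sum_n a_n\tilde q_j^{\,n})/\theta(\tilde q_j)E_\kappa(\tilde q_j)$. By Proposition \ref{twistingfunction} (which is the case $j=0$) and the same chain of $q$-expansion identities with $\tilde q_j$ in place of $q_{p^2}$ — in particular $\pi^*\E_{p^2}$ contributing $E(\tilde q_j)/E(q)$ and $\Theta_{p^2}$ contributing $\theta(\tilde q_j)/\theta(q)$ — the $q$-expansion of $H$ at the $j$-th cusp is $E_\kappa(\tilde q_j)\theta(\tilde q_j)/E_\kappa(q)\theta(q)$. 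Multiplying these, the factors $\theta(\tilde q_j)$ and $E_\kappa(\tilde q_j)$ cancel — which is precisely what $\H_{p^2}$ was built to do — leaving $(\sum_n a_n\tilde q_j^{\,n})/\theta(q)E_\kappa(q)$. Summing over $j$, dividing by $p^2$, and using $\tilde q_j^{\,n}=\zeta_{p^2}^{\,jn}q_{p^2}^{\,n}$, $q_{p^2}^{p^2}=q$, and the fact that $\frac{1}{p^2}\sum_{j\bmod p^2}\zeta_{p^2}^{\,jn}$ equals $1$ if $p^2\mid n$ and $0$ otherwise, I obtain $(U_{p^2}F)(\Tate(q),\zeta)=(\sum_m a_{p^2m}q^m)/\theta(q)E_\kappa(q)$. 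Hence the $q$-expansion of $U_{p^2}F$, namely $(U_{p^2}F)(\Tate(q),\zeta)\theta(q)E_\kappa(q)$, is $\sum_m a_{p^2m}q^m$, as claimed.

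The step I expect to be the main obstacle is the second one: pinning down the fibre of $\pi_1$ over $\mathfrak c$ and verifying that $\pi_{1*}$ acts on $q$-expansions as the sum over the $\tilde q_j$ after the substitution $q\mapsto\zeta_{p^2}^{\,j}q_{p^2}$. This is the familiar delicate point about cusp widths and residue fields, and for $\ell=p$ there is the additional wrinkle that $\pi_1$ restricted to $X_1(4Np,p^2)^\an_{\geq p^{-pr}}$ is only finite flat, not étale, over the disk around $\mathfrak c$, so one must use that the relevant cusps lie in the ordinary locus, where the trace behaves exactly as in the classical case; for these reasons I would invoke \cite{mfhi} and the classical references therein rather than reprove it here. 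The remaining bookkeeping — the two intermediate restrictions, the $\ell=p$ growth-condition normalisations, and the $p=2$ and family variants — only concerns a neighbourhood of $\mathfrak c$ and leaves the computation unchanged.
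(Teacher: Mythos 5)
Your proposal is correct and follows essentially the same route as the paper's proof: identify the $\pi_1$-fibre over the cusp $(\Tate(q),\zeta)$ with the $p^2$ subgroups $\ip{\zeta_{p^2}^jq_{p^2}}$, use $\Tate(q)/\ip{\zeta_{p^2}^jq_{p^2}}\cong\Tate(\zeta_{p^2}^jq_{p^2})$, evaluate $\H_{p^2}(\kappa)$ there to produce the cancelling factors $\theta(\zeta_{p^2}^jq_{p^2})E_\kappa(\zeta_{p^2}^jq_{p^2})/\theta(q)E_\kappa(q)$, and then apply the roots-of-unity filter. The only cosmetic difference is that you explicitly isolate the trace-as-sum and roots-of-unity steps as citable lemmas, whereas the paper simply writes out the identical chain of identities; this is a matter of exposition, not substance.
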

\begin{proof}
  We prove the theorem for $U_{p^2}$ acting on
  $\widetilde{M}_\kappa(4N,K,p^{-r})$.  To obtain the result for
  families one could either proceed in the same manner or deduce the
  result for families over $X$ from the result for fixed weight by
  specializing to weights in $X$.  Let $F\in \widetilde{M}_\kappa
  (4N,K,p^{-r})$ and suppose that
  $$F(\Tate(q),\zeta)\theta(q)E_\kappa(q) = \sum a_n q^n.$$
  The
  expansion we seek is $$\frac{1}{p^2}\pi_{1*}(\pi_2^*F \cdot
  \H_{p^2}(\kappa))(\Tate(q),\zeta) \cdot
  \theta(q)E_\kappa(q).$$
  The cyclic subgroups of order $p^2$ that
  intersect the subgroup generated by $\zeta$ trivially are
  exactly those of the form $\ip{\zeta_{p^2}^iq_{p^2}}$, $0\leq i\leq
  p^2-1$.  Thus we have
  \begin{eqnarray*}
    \lefteqn{\pi_{1*}(\pi_2^*F\cdot \H_{p^2}(\kappa))(\Tate(q),\zeta) =
    \sum_{i=0}^{p^2-1} (\pi_2^*F\cdot
    \H_{p^2}(\kappa))(\Tate(q),\zeta,\ip{\zeta_{p^2}^iq_{p^2}}) }
    && \\ &=& 
    \sum_{i=0}^{p^2-1}F(\Tate(q)/\ip{\zeta_{p^2}^iq_{p^2}},
    \zeta/\ip{\zeta_{p^2}^iq_{p^2}})
    \H_{p^2}(\kappa)(\Tate(q),\zeta,\ip{\zeta_{p^2}^iq_{p^2}}) \\ &=& 
    \sum_{i=0}^{p^2-1} F(\Tate(\zeta_{p^2}^iq_{p^2}),\zeta)
    \H_{p^2}(\kappa)(\Tate(q),\zeta,\ip{\zeta_{p^2}^iq_{p^2}}) \\ &=&
    \sum_{i=0}^{p^2-1} \frac{ \sum a_n (\zeta_{p^2}^iq_{p^2})^n
    }{\theta(\zeta_{p^2}^iq_{p^2})E_\kappa(\zeta_{p^2}^iq_{p^2})}
    \frac{\theta(\zeta_{p^2}^iq_{p^2}) E_\kappa(\zeta_{p^2}^iq_{p^2})}
    {\theta(q)E_\kappa(q)}  = p^2\frac{\sum
    a_{p^2n}q^n}{\theta(q)E_\kappa(q)} 
  \end{eqnarray*}
\end{proof}

\noindent The same analysis also proves the following.
\begin{prop}
  Suppose that either $\ell|4N$.  Let $F$ be an element of
  $\widetilde{M}_\kappa(4N,K,p^{-r})$ or
  $\widetilde{M}_X(4N,K,p^{-r})$ and let $\sum a_n q^n$ be the
  $q$-expansion of $F$ at $(\Tate(q),\zeta)$.  Then the corresponding
  $q$-expansion of $U_{\ell^2}F$ is then $\sum a_{\ell^2n}q^n$.
\end{prop}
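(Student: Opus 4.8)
The plan is to run the computation of the preceding proposition (the one treating $U_{p^2}$) essentially verbatim with $p$ replaced by $\ell$, exploiting that $\ell\mid 4N$ forces $\ell\neq p$: thus no canonical-subgroup combinatorics intervene, the construction of $U_{\ell^2}$ involves no shrinking of the growth condition (and hence no final restriction map), and it suffices to treat $F\in\widetilde{M}_\kappa(4N,K,p^{-r})$, the family case following by the identical argument with $\OO(X)$-coefficients or by specialization to points of $X$. Write $F(\Tate(q),\zeta)\theta(q)E_\kappa(q)=\sum_n a_n q^n$; by construction the $q$-expansion of $U_{\ell^2}F$ is $\frac{1}{\ell^2}\,\pi_{1*}(\pi_2^*F\cdot\H_{\ell^2}(\kappa))(\Tate(q),\zeta)\cdot\theta(q)E_\kappa(q)$, so the task is to evaluate the trace $\pi_{1*}$ along the cusp $(\Tate(q),\zeta)$.

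First I would describe the fibre of $\pi_1$ over this cusp. Since $\ell\mid 4N\mid 4Np$, the subgroup $\ip{\zeta}$ of $\Tate(q)$ contains $\mu_\ell$, so a cyclic subgroup $C$ of order $\ell^2$ meets $\ip{\zeta}$ trivially exactly when $C[\ell]\neq\mu_\ell$, and these are precisely the $\ell^2$ distinct subgroups $\ip{\zeta_{\ell^2}^i q_{\ell^2}}$, $0\le i\le\ell^2-1$; hence $\pi_{1*}(\pi_2^*F\cdot\H_{\ell^2}(\kappa))(\Tate(q),\zeta)=\sum_{i=0}^{\ell^2-1}(\pi_2^*F\cdot\H_{\ell^2}(\kappa))(\Tate(q),\zeta,\ip{\zeta_{\ell^2}^i q_{\ell^2}})$. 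To evaluate each summand I would use that $(\zeta_{\ell^2}^i q_{\ell^2})^{\ell^2}=q$, so the ring automorphism fixing every root of unity and sending $q_{\ell^2}\mapsto\zeta_{\ell^2}^i q_{\ell^2}$ fixes $q$, hence $\Tate(q)$ and $\zeta$, and carries $\ip{q_{\ell^2}}$ to $\ip{\zeta_{\ell^2}^i q_{\ell^2}}$. Applying it to the defining $q$-expansion of $\H_{\ell^2}$ (pulled back to the source curve) yields $\H_{\ell^2}(\kappa)(\Tate(q),\zeta,\ip{\zeta_{\ell^2}^iq_{\ell^2}})=\frac{E_\kappa(\zeta_{\ell^2}^iq_{\ell^2})\theta(\zeta_{\ell^2}^iq_{\ell^2})}{E_\kappa(q)\theta(q)}$, while $\pi_2(\Tate(q),\zeta,\ip{\zeta_{\ell^2}^iq_{\ell^2}})=(\Tate(q)/\ip{\zeta_{\ell^2}^iq_{\ell^2}},\zeta)\cong(\Tate(\zeta_{\ell^2}^iq_{\ell^2}),\zeta)$, so $\pi_2^*F$ evaluates to $F(\Tate(\zeta_{\ell^2}^iq_{\ell^2}),\zeta)=\frac{\sum_n a_n(\zeta_{\ell^2}^iq_{\ell^2})^n}{\theta(\zeta_{\ell^2}^iq_{\ell^2})E_\kappa(\zeta_{\ell^2}^iq_{\ell^2})}$.

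Multiplying, each summand collapses to $\frac{\sum_n a_n(\zeta_{\ell^2}^iq_{\ell^2})^n}{\theta(q)E_\kappa(q)}$, and summing over $i$ the inner sum $\sum_{i=0}^{\ell^2-1}\zeta_{\ell^2}^{in}$ equals $\ell^2$ when $\ell^2\mid n$ and $0$ otherwise, leaving $\pi_{1*}(\pi_2^*F\cdot\H_{\ell^2}(\kappa))(\Tate(q),\zeta)=\frac{\ell^2\sum_m a_{\ell^2m}q^m}{\theta(q)E_\kappa(q)}$; dividing by $\ell^2$ and multiplying through by $\theta(q)E_\kappa(q)$ gives $\sum_m a_{\ell^2m}q^m$. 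The case $p=2$ is identical (here $\ell\mid 4N$ with $\ell\neq 2$ forces $\ell\mid N$). There is no genuinely new obstacle; the only two points deserving explicit justification are the enumeration of the $\pi_1$-fibre over the cusp and the legitimacy of transporting the $q$-expansion identity for $\H_{\ell^2}$ along the substitution $q_{\ell^2}\mapsto\zeta_{\ell^2}^i q_{\ell^2}$, and both go exactly as in the proof of the preceding proposition.
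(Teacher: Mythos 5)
Your proposal is correct and is exactly the argument the paper intends: the paper's stated proof of this proposition is the single sentence ``The same analysis also proves the following,'' referring back to the $U_{p^2}$ computation, and you have simply unwound that sentence into the explicit calculation, with the right identification of the $\pi_1$-fibre over the cusp (the $\ell^2$ subgroups $\ip{\zeta_{\ell^2}^iq_{\ell^2}}$, since $\mu_\ell\subseteq\ip{\zeta}$ rules out $\mu_{\ell^2}$ and the $\ip{\zeta_{\ell^2}^jq_\ell}$) and the roots-of-unity orthogonality at the end. The only minor imprecision is the opening claim that $\ell\mid 4N$ forces $\ell\neq p$ (false when $p=2$), but since $\ell=p$ is covered by the preceding $U_{p^2}$ proposition and you treat $p=2$, $\ell\neq 2$ correctly at the end, this does not affect the argument.
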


In order to work out the effect of $T_{\ell^2}$ for $\ell\nmid 4Np$ on
$q$-expansions, we will need several more $q$-expansions of
$\Theta_{\ell^2}$ and $\E_{\ell^2}$.  For the former, we refer the
reader to \cite{mfhi}.  The latter will follow from the following
lemma.  For $x\in\Z_p^\times$, we denote by $[x]$ the analytic
function on $\scr{W}$ defined by $[x](\kappa) = \kappa(x)$.
\begin{lemm}\label{moreqexpansions}
  For $\ell\neq p$ we have $$\E_\ell(\Tate(q),\mu_p+\ip{q_\ell}) =
  [\ip{\ell}] \frac{E(q)}{E(q_\ell)}\ \ \mbox{and}\ \ 
  \E_\ell(\Tate(q),\mu_{p\ell}) = \frac{E(q)}{E(q^\ell)}.$$
\end{lemm}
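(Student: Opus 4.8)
The plan is to compute both $q$-expansions directly from the definition of $\E_\ell$ as the rigid-analytic function on $X_0(p\ell)^\an_{\geq 1}\times \scr{W}^0$ whose $q$-expansion at $(\Tate(q),\mu_{p\ell})$ is $E(q)/E(q^\ell)$, combined with the behavior of the Tate curve under isogeny. The second formula is essentially a restatement of the defining property once one unwinds how the level-$p\ell$ structure $\mu_{p\ell}$ is encoded, so the work is concentrated in the first formula.

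For the first formula, I would argue as follows. The cusp of $X_0(p\ell)^\an_{\Q_p}$ given by the datum $(\Tate(q),\mu_p + \ip{q_\ell})$ is \emph{not} the standard cusp $(\Tate(q),\mu_{p\ell})$ used to define $\E_\ell$; rather, the subgroup $\ip{q_\ell}$ of order $\ell$ is "the other one" relative to $\mu_\ell \subseteq \mu_{p\ell}$. The standard device (as in Section 6 of \cite{buzzardeigenvarieties} and as used already in the proof of Lemma \ref{lemma1}) is to apply the degeneracy/isogeny by $\ip{q_\ell}$: quotienting $\Tate(q)$ by $\ip{q_\ell}$ yields $\Tate(q^\ell)$ with its level structure, while quotienting by $\mu_\ell$ yields $\Tate(q_\ell^{\ell}) = \Tate(q)$ after the substitution $q \mapsto q_\ell$ in the appropriate sense, i.e.\ $\Tate(q_\ell)/\mu_\ell \cong \Tate(q_\ell^\ell)=\Tate(q)$. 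Tracking the $\Gamma_0(p\ell)$-structure through these isogenies, one finds that the value of $\E_\ell$ at $(\Tate(q),\mu_p+\ip{q_\ell})$ differs from $E(q)/E(q_\ell)$ precisely by the scalar by which the canonical differential is scaled, and this scalar is controlled by a diamond-type operator giving the factor $[\ip{\ell}] = \kappa \mapsto \kappa(\ip{\ell})$ after feeding through the map \eqref{wmap} that extends everything from $\scr{W}^0$ to $\scr{W}$. Concretely, one compares the two cusps via the Atkin--Lehner-style involution at $\ell$ on $X_0(p\ell)$ and uses that the Eisenstein family $E(q)$ has nebentypus encoded by $\kappa$, so that the $\ell$-diamond introduces exactly $\kappa(\ip{\ell})$.

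In carrying this out, the key steps in order are: (i) recall the precise $q$-expansion principle for $\E_\ell$ and the identification of cusps of $X_0(p\ell)$ with level structures on $\Tate(q)$; (ii) identify $(\Tate(q),\mu_p+\ip{q_\ell})$ with the image of $(\Tate(q_\ell),\mu_{p\ell})$ (or of the standard cusp) under the appropriate degeneracy map or Atkin--Lehner involution at $\ell$; (iii) compute how $E(q)$ pulls back under this map on $q$-expansions, using that $E(q) = 1 + \tfrac{2}{\zeta_p(\kappa)}\sum_n(\cdots)q^n$ has its $\ell$-th coefficient behavior governed by $\kappa$; (iv) collect the scalar and recognize it as $[\ip{\ell}]$ via the map $\kappa \mapsto \kappa\circ\ip{}$ of \eqref{wmap}. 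The second displayed formula then follows either directly from the definition or as the special case where the level structure at $\ell$ is the "standard" one, so no diamond correction appears.

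The main obstacle I anticipate is bookkeeping the level-$p\ell$ structure and the associated normalization of differentials carefully enough to pin down the scalar exactly as $[\ip{\ell}]$ rather than, say, $[\ell]$ or $[\tau(\ell)]$ — i.e.\ making sure the Teichm\"uller part is handled correctly in light of the passage between $\scr{W}^0$ and $\scr{W}$ via \eqref{wmap}. This is the same subtlety that forced the twist by $\tau^i$ in the definitions of the spaces of forms, so I would cross-check the computation against the known effect of $T_\ell$ (equivalently $T_{\ell^2}$ after Shimura-type squaring of the weight) on $q$-expansions in the integral-weight Eisenstein family of \cite{buzzardeigenvarieties}, where the analogous factor $\ell^{k-1}$ (classically) or $[\ell]/\ell$ ($p$-adically) is well documented; matching the classical specializations $\kappa = \lambda$ for integers $\lambda$ divisible by $\varphi(\q)$ should leave no ambiguity.
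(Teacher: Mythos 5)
Your proposal identifies the right ingredients — the cusp $(\Tate(q),\mu_p+\ip{q_\ell})$ is related to the standard one by an isogeny/Atkin--Lehner move at $\ell$, and the factor $[\ip{\ell}]$ should be pinned down by matching against classical specializations — but as written it does not quite constitute a proof, and the two pieces are weighted in the wrong order relative to the argument the paper actually runs.

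The main issue is the claim that $\E_\ell$ at the nonstandard cusp "differs from $E(q)/E(q_\ell)$ precisely by the scalar by which the canonical differential is scaled." Since $\E_\ell$ is a genuine weight-$0$ function, there is no canonical differential whose scaling governs anything; the factor has to come out of the automorphy factor of the weight-$\lambda$ Eisenstein series $E_\lambda$ sitting in the numerator and denominator, and you cannot see this without actually passing to the ratio $E_\lambda(\tau)/E_\lambda(\ell\tau)$ and doing the change of variables. The paper does exactly this: it reduces by Zariski density to integers $\lambda\geq 2$ divisible by $\varphi(\q)$ (which are dense in $\scr{W}^0$), identifies $\E_\ell(\lambda)$ with the restriction of the complex-analytic $\E^\an_\ell(\lambda) := E_\lambda(\tau)/E_\lambda(\ell\tau)$ on $X_0(p\ell)^\an_{\C}$ via GAGA and the $q$-expansion principle, and then computes the $q$-expansion at $(\Tate(q),\mu_p+\ip{q_\ell})$ explicitly by choosing a matrix $\gamma\in\SL_2(\Z)$ with $p\mid c$ and $\ell\mid d$ carrying $\ip{1/p}+\ip{\tau/\ell}$ to $\ip{1/p\ell}$; the automorphy factor $(c\tau+d)^\lambda/((c\tau+d)/\ell)^\lambda = \ell^\lambda$ pops out, which interpolates to $[\ell]$ on $\scr{W}^0$ and hence to $[\ip{\ell}]$ after pull-back through \eqref{wmap}. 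In short: what you propose as the "cross-check" against classical weights \emph{is} the proof, and what you propose as the primary route (abstract differential bookkeeping on the weight-$0$ object $\E_\ell$) doesn't close on its own. If you promote the specialization-to-classical-weights step to the main argument and carry out the $\SL_2(\Z)$ computation, you recover the paper's proof.
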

\begin{proof}
  The second equality is how we chose to characterize $\E_\ell$ in the
  first place.  We will use it to give an alternative
  characterization, which we will in turn use to prove the first
  equality.  
  
  By definition, $\E_\ell$ and the coefficients of $E(q)$ are
  pulled back from their restrictions to $\scr{W}^0$ through the map
  (\ref{wmap}).  Clearly $[\ip{\ell}]$ is the pull-back of $[\ell]$
  through this map, so it suffices to prove that
  $$\E_\ell(\Tate(q),\mu_p+\ip{q_\ell}) = [\ell]\frac{E(q)}
  {E(q_\ell)}$$
  where the coefficients are now though of as function
  only on $\scr{W}^0$.  Moreover, it suffices to prove the equality
  after specialization to integers $\lambda\geq 2$ divisible by $\varphi(\q)$,
  as such integers are Zariski-dense in $\scr{W}^0$.  Let
  $E_\lambda(\tau)$ denote the classical analytic $p$-deprived
  Eisenstein series of weight $\lambda$ and level $p$ (normalized to
  have $q$-expansion $E_\lambda(q)$).  Then
  $$\E^\an_\ell(\lambda) := E_\lambda(\tau)/E_\lambda(\ell\tau)$$
  is a
  meromorphic function on $X_0(p\ell)_\C^\an$ with rational
  $q$-expansion coefficients, and by GAGA and the $q$-expansion
  principle yields a rational function on the algebraic curve
  $X_0(p\ell)_{\Q_p}$.  By comparing $q$-expansions it is
  evident that the restriction of this function to the region
  $X_0(p\ell)^\an_{\geq 1}$ is equal to the specialization,
  $\E_\ell(\lambda)$, of $\E_\ell$ to $\lambda\in\scr{W}^0$.
  
  It follows that $\E_\ell(\lambda)(\Tate(q),\mu_p+\ip{q_\ell}) =
  \E^\an_\ell(\lambda)(\Tate(q),\mu_p+\ip{q_\ell})$. The right side
  can be computed using the usual yoga where one pretends to
  specialize $q$ to $e^{2\pi i\tau}$ and then computes with analytic
  transformation formulas (see Section 5 of \cite{mfhi} for a rigorous
  explanation of this yoga).  So specializing, we get
  $$\E^\an_\ell(\lambda)(\Tate(q),\mu_p+\ip{q_\ell})(\tau) =
  \E^\an_\ell(\lambda)(\C/\ip{1,\tau},\ip{1/p} +\ip{\tau/\ell}).$$
  Choosing a matrix $$\gamma= \left(\begin{matrix}a & b \\ c &
    d\end{matrix}\right)\in\SL_2(\Z)$$
  such that $p|c$ and $\ell|d$ we arrive
  at an isomorphism
  \begin{eqnarray*}
    (\C/\ip{1,\tau},\ip{1/p}+\ip{\tau/\ell}) &
    \stackrel{\sim}{\longrightarrow} &
    (\C/\ip{1,\gamma\tau},\ip{1/p\ell}) \\ 
    z & \longmapsto & \frac{z}{c\tau+d}
  \end{eqnarray*}
  Thus $$\E^\an_\ell(\lambda)(\C/\ip{1,\tau},\ip{1/p}+\ip{\tau/\ell}) =
  \E^\an_\ell(\lambda)(\C/\ip{1,\gamma\tau},\ip{1/p\ell}) =
  \frac{E_\lambda(\gamma\tau) }{E_\lambda(\ell\gamma\tau)}.$$  Now
  $$\ell\gamma\tau =
  \frac{(a\ell)(\tau/\ell)+b}{c(\tau/\ell)+d/\ell},$$ so we have
  $$\frac{E_\lambda(\gamma\tau)}{E_\lambda(\ell\gamma\tau)} =
  \frac{(c\tau+d)^\lambda E_\lambda(\tau)} {((c\tau+d)/\ell)^\lambda
  E_\lambda(\tau/\ell)} = \ell^\lambda \frac{E_\lambda(\tau)}
  {E_\lambda(\tau/\ell)}$$ and the result follows.
\end{proof}

\begin{prop}
  Let $F\in \widetilde{M}_\kappa(4N,K,p^{-r},\chi)$ with
  $\kappa\in\scr{W}^i$ and let $\sum a_nq^n$ be the $q$-expansion of
  $F$ at $(\Tate(q),\zeta)$.
  Then the corresponding $q$-expansion of $T_{\ell^2}F$ is $\sum
  b_nq^n$ where $$b_n = a_{\ell^2n} + \kappa(\ell)\chi(\ell)\ell^{-1}
  \left(\!\frac{(-1)^in}{\ell}\!\right)a_n +
  \kappa(\ell)^2\chi(\ell)^2\ell^{-1}a_{n/\ell^2}.$$
  
  Let $F\in \widetilde{M}_X(4N,K,p^{-r},\chi)$ with $X$ a connected
  affinoid in $\scr{W}^i$, and let the $q$-expansion of $F$ be $\sum
  a_n q^n$ as above.  Then the corresponding $q$-expansion of
  $T_{\ell^2}F$ is $\sum b_nq^n$ where
  $$b_n = a_{\ell^2n} + [\ell]\chi(\ell)\ell^{-1}
  \left(\!\frac{(-1)^in}{\ell}\!\right)a_n +
  [\ell]^2\chi(\ell)^2\ell^{-1}a_{n/\ell^2}.$$
\end{prop}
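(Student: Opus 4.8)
The plan is to reduce to the fixed--weight case and compute the $q$--expansion of $T_{\ell^2}F$ directly at the cusp $(\Tate(q),\zeta)$; we are of course in the case $\ell\nmid 4Np$, where $T_{\ell^2}$ is defined, and I would treat $p\neq 2$, the case $p=2$ being identical with the curves carrying no extra $p$. For the family statement one may either repeat the computation below verbatim with $\kappa(\ell)$ replaced throughout by the analytic function $[\ell]$ --- noting that on $\scr{W}^i$ one has $[\ell]=\tau(\ell)^i[\ip{\ell}]$, where $[\ip{\ell}]$ is the factor that actually arises, pulled back from $\scr{W}^0$ through (\ref{wmap}) --- or simply specialize at each $\kappa\in X$, the specialization maps being Hecke--equivariant. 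So take $F\in\widetilde{M}_\kappa(4N,K,p^{-r},\chi)$ with $\kappa\in\scr{W}^i$ and $q$--expansion $\sum a_nq^n$ at $(\Tate(q),\zeta)$. By the construction of $T_{\ell^2}$ in Section \ref{heckefixed} together with the $q$--expansion convention, the expansion we seek is
$$\frac{1}{\ell^2}\pi_{1*}(\pi_2^*F\cdot\H_{\ell^2}(\kappa))(\Tate(q),\zeta)\cdot\theta(q)E_\kappa(q),$$
where $\pi_{1*}$ at this cusp is the sum of $(\pi_2^*F\cdot\H_{\ell^2}(\kappa))(\Tate(q),\zeta,C)$ over the cyclic subgroups $C\subseteq\Tate(q)$ of order $\ell^2$ (all meeting $\ip{\zeta}$ trivially, since $\ell\nmid 4Np$), and $(\pi_2^*F)(\Tate(q),\zeta,C)=F(\Tate(q)/C,\zeta/C)$ exactly as in the $U_{p^2}$ computation.

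The heart of the proof is to partition these $\ell(\ell+1)$ subgroups into three families and analyze each. They are: (I) the $\ell^2$ subgroups $\ip{\zeta_{\ell^2}^j q_{\ell^2}}$, $0\le j\le\ell^2-1$, for which $\Tate(q)/C\cong\Tate(\zeta_{\ell^2}^j q_{\ell^2})$ with $\zeta$ carried to $\zeta$ (no diamond twist), exactly as for $U_{p^2}$; (II) the single multiplicative subgroup $\mu_{\ell^2}$, for which $\Tate(q)/C\cong\Tate(q^{\ell^2})$ via the $\ell^2$--power map, so $\zeta$ is twisted by $\ip{\ell^2}$; and (III) the remaining $\ell-1$ subgroups, each containing $\mu_\ell$, for which $\Tate(q)/C\cong\Tate(\zeta_\ell^a q)$ with $a\in(\Z/\ell\Z)^\times$ and $\zeta$ twisted by $\ip{\ell}$. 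For families (II) and (III) I would decompose the twist as $\ip{\ell^e}=\ip{\ell^e}_{4N}\circ\ip{\ell^e}_p$: because $F$ lies in the $\tau^i$--eigenspace of the diamonds at $p$ this contributes the scalar $\tau(\ell)^{ei}$, and because $F$ lies in the $\chi$--eigenspace of the twisted operators $\ip{}_{4N,\kappa}$ --- whose definition carries the correction $(-1/\cdot)^i$ --- it contributes $\chi(\ell)^e\left(\!\frac{-1}{\ell}\!\right)^{ei}$. For the value of $\H_{\ell^2}(\kappa)$ at each of these cusps I would use $\H_{\ell^2}=\Theta_{\ell^2}\cdot\pi^*\E_{\ell^2}$ with $\E_{\ell^2}=d_{\ell,1}^*\E_\ell\cdot d_{\ell,2}^*\E_\ell$ as in Lemma \ref{lemma1}: the Eisenstein factor at each cusp is read off from Lemma \ref{moreqexpansions}, whose factor $[\ip{\ell}]$ --- once combined with the $\tau(\ell)^{ei}$ above --- produces the powers of $\kappa(\ell)$ appearing in the formula; and the theta factor at each cusp is one of the $q$--expansions of $\Theta_{\ell^2}$ recorded in \cite{mfhi}, which carry an extra factor $\ell$ at the family--(II) cusp (since $\ell^2$ is a square) and quadratic Gauss sums in $\zeta_\ell$ at the family--(III) cusps.

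Feeding all this into $\tfrac1{\ell^2}\sum_C(\cdots)$, every $\theta$ and every $E_\kappa$ cancels and one is left with the $a_n$ weighted by elementary exponential sums. Family (I) yields, via $\sum_{j=0}^{\ell^2-1}\zeta_{\ell^2}^{jn}\in\{0,\ell^2\}$, precisely the term $a_{\ell^2 n}$ with coefficient $1$; family (II) yields $\kappa(\ell)^2\chi(\ell)^2\ell^{-1}a_{n/\ell^2}$, the sign $\left(\!\frac{-1}{\ell}\!\right)^{2i}$ being trivial and the $\tfrac1{\ell^2}$ being compensated by the extra $\ell$ in the theta $q$--expansion at the $\mu_{\ell^2}$--cusp; and family (III) yields $\kappa(\ell)\chi(\ell)\ell^{-1}\left(\!\frac{(-1)^i n}{\ell}\!\right)a_n$, where the Legendre symbol $\left(\!\frac{n}{\ell}\!\right)$ comes from the Gauss sum $\sum_{c\bmod\ell}\zeta_\ell^{c^2 n}$ (which equals $\left(\!\frac{n}{\ell}\!\right)g_\ell$ for $\ell\nmid n$), the surviving $\ell^{-1}$ from $g_\ell^2=\left(\!\frac{-1}{\ell}\!\right)\ell$, and $\left(\!\frac{(-1)^i n}{\ell}\!\right)=\left(\!\frac{-1}{\ell}\!\right)^i\left(\!\frac{n}{\ell}\!\right)$ after merging with the $\left(\!\frac{-1}{\ell}\!\right)^i$ coming from the $\ip{\ell}$--twist. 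Multiplying back by $\theta(q)E_\kappa(q)$ gives the asserted $b_n$, and the family version follows with $[\ell]$ in place of $\kappa(\ell)$.

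The main obstacle I expect is exactly the bookkeeping of the previous two paragraphs: for each of the $\ell(\ell+1)$ cusps one must correctly identify the induced $\Gamma_1$--structure on the quotient Tate curve and the resulting diamond twist, threading the $(-1/\cdot)^i$--correction (which was built into $\ip{}_{4N,\kappa}$ precisely so the signs $\left(\!\frac{-1}{\ell}\!\right)^i$ come out right) through consistently, and one must pin down the exact $q$--expansions of $\Theta_{\ell^2}$ at the family--(II) and family--(III) cusps so that the factor $\ell$ and the quadratic Gauss sums land with the correct powers of $\ell$. All of this is, however, essentially the computation carried out for classical half--integral weight in \cite{mfhi}; the only genuinely new inputs are Lemma \ref{moreqexpansions} (together with the relation $[\ell]=\tau(\ell)^i[\ip{\ell}]$ coming from (\ref{wmap})), which handle the interpolation in $\kappa$, and the continuation properties packaged in Proposition \ref{twistingfunction}.
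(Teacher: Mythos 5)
Your proposal is correct and follows essentially the same route as the paper: partition the cyclic subgroups of $\Tate(q)$ of order $\ell^2$ into the three types $\mu_{\ell^2}$, $\ip{\zeta_{\ell^2}^aq_{\ell^2}}$, and $\ip{\zeta_{\ell^2}^bq_\ell}$, track the diamond twist on $\zeta$ together with the $(-1/\cdot)^i$--correction built into $\ip{}_{4N,\kappa}$, and read off the Eisenstein factor from Lemma \ref{moreqexpansions} (with $\E_{\ell^2}=d_{\ell,1}^*\E_\ell\cdot d_{\ell,2}^*\E_\ell$) and the theta factor from the $q$--expansions of $\Theta_{\ell^2}$ in \cite{mfhi}. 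The only cosmetic difference is the handling of the Gauss sum for the family--(III) cusps: you invoke $\sum_c\zeta_\ell^{c^2n}=\left(\tfrac{n}{\ell}\right)\mathfrak{g}_\ell$, while the paper writes $\mathfrak{g}_\ell(\zeta_\ell^b)=\left(\tfrac{b}{\ell}\right)\mathfrak{g}_\ell(\zeta_\ell)$ and then sums $\sum_b\left(\tfrac{b}{\ell}\right)\zeta_\ell^{bn}$; these agree, so the argument is the same in substance.
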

\begin{proof}
  We prove the first assertion.  The second assertion may either be
  proven directly in the same manner or simply deduced from the first
  via specialization to individual weights in $X$.  Let
  $\kappa\in\scr{W}(K)$, let $F\in \widetilde{M}_\kappa
  (4N,K,p^{-r},\chi)$, and let
  $$F(\Tate(q),\zeta)\theta(q)E_\kappa(q) = \sum a_n q^n$$
  be
  the $q$-expansion of $F$ at $(\Tate(q),\zeta)$.  The
  corresponding $q$-expansion of $T_{\ell^2}F$ is
\begin{equation}\label{qexp}
\frac{1}{\ell^2}\pi_{1*}(\pi_2^*F \cdot
  \H_{\ell^2}(\kappa)) \cdot \theta(q)E_\kappa(q).
\end{equation}
The cyclic subgroups of $\Tate(q)$ of order $\ell^2$ are the subgroups
  $$\mu_{\ell^2},\ \ \  \ip{\zeta_{\ell^2}^iq_{\ell^2}}_{0\leq i\leq
  \ell^2-1},\ \ \mbox{and}\ \   \ip{\zeta_{\ell^2}^jq_\ell}_{1\leq
  j\leq \ell-1}.$$ We examine the contribution of each of these types
  of subgroups to $$\pi_{1*}(\pi_2^*F\cdot \H_{\ell^2}(\kappa))$$
  separately.  

  First, we have 
 \begin{eqnarray*}
    \lefteqn{   F(\Tate(q)/\mu_{\ell^2},\zeta/\mu_{\ell^2})
      \H_{\ell^2}(\kappa)(\Tate(q),\zeta,\mu_{\ell^2})   } \\ &=&
    F(\Tate(q^{\ell^2}),\zeta^{\ell^2})
    \Theta_{\ell^2}(\Tate(q),\zeta_4,\mu_{\ell^2})
    \pi^*\E_{\ell^2}(\kappa)(\Tate(q),\zeta_p,\mu_{\ell^2})  \\ &=&
    F(\Tate(q^{\ell^2}),\zeta^{\ell^2})
    \Theta_{\ell^2}(\Tate(q),\zeta_4,\mu_{\ell^2})\\ &&
    \hspace{9em}\cdot\E_{\ell^2}(\kappa)(\Tate(q)/\mu_{\ell^2},(\mu_p +
    \Tate(q)[\ell^2])/\mu_{\ell^2}) 
        \\ &=&
    F(\Tate(q^{\ell^2}),\zeta^{\ell^2})
    \Theta_{\ell^2}(\Tate(q),\zeta_4,\mu_{\ell^2})
    \E_{\ell^2}(\kappa)(\Tate(q^{\ell^2}), \mu_p +  \ip{q})
  \end{eqnarray*}
  From the definition (\ref{Edefinition}) and Lemma
  \ref{moreqexpansions} we have
\begin{eqnarray*}
\lefteqn{
\E_{\ell^2}(\Tate(q^{\ell^2}),\mu_p+\ip{q})} \\  &=& \E_\ell
(\Tate(q^{\ell^2}),\mu_p+\ip{q^\ell}) \E_\ell
(\Tate(q^{\ell^2})/\ip{q^\ell},(\mu_p+\ip{q})/\ip{q^\ell}) \\
&=& \E_\ell(\Tate(q^{\ell^2}),\mu_p+\ip{q^\ell}) \E_\ell
(\Tate(q^\ell),\mu_p+\ip{q})  \\ &=&
[\ip{\ell}]\frac{E(q^{\ell^2})} {E(q^\ell)} \cdot
[\ip{\ell}] \frac{E(q^\ell)} {(q)} =
[\ip{\ell}]^2\frac{E(q^{\ell^2})} {E(q)}
\end{eqnarray*}
When specialized to $\kappa$, this becomes
$$\kappa(\ip{\ell})^2\frac{E_\kappa(q^{\ell^2})} {E_\kappa(q)}.$$
Referring to  \cite{mfhi} we find
$$\Theta_{\ell^2}(\Tate(q),\zeta_4,\mu_{\ell^2}) =
\ell\frac{\theta(q^{\ell^2})} {\theta(q)}.$$
Thus the contribution of
this first subgroup is
$$\frac{\chi(\ell^2)\tau(\ell^2)^i\sum a_n q^{\ell^2n}}
{\theta(q^{\ell^2})E_\kappa(q^{\ell^2})}\ell\frac{\theta(q^{\ell^2})}
{\theta(q)} \kappa(\ip{\ell})^2\frac{E_\kappa(q^{\ell^2})} {E_\kappa(q)} =
(\kappa(\ip{\ell})\chi(\ell)\tau(\ell)^i)^2\frac{\ell\sum a_nq^{\ell^2n}}
{\theta(q)E_\kappa(q)}$$ 

The subgroups $\ip{\zeta_{\ell^2}^aq_{\ell^2}}$ contribute 
\begin{eqnarray*}
\lefteqn{\sum_{a=0}^{\ell^2-1}  F(\Tate(q)/\ip{\zeta_{\ell^2}^aq_{\ell^2}},
  \zeta/\ip{\zeta_{\ell^2}^aq_{\ell^2}})   
  \mathbf{H}_{\ell^2}(\kappa)(\Tate(q),\zeta,
  \ip{\zeta_{\ell^2}^aq_{\ell^2}}) 
  } \\ &= & 
  \sum_{a=0}^{\ell^2-1}F(\Tate(\zeta_{\ell^2}^aq_{\ell^2}),\zeta)
  \Theta_{\ell^2}(\Tate(q),\zeta_4,\ip{\zeta_{\ell^2}^aq_{\ell^2}})\\
  &&\hspace{9em}\cdot
  \pi^*\E_{\ell^2}(\kappa)(\Tate(q),\zeta_p,\ip{\zeta_{\ell^2}^aq_{\ell^2}})
  \\ &=& \sum_{a=0}^{\ell^2-1}F(\Tate(\zeta_{\ell^2}^aq_{\ell^2}),\zeta)
  \Theta_{\ell^2}(\Tate(q),\zeta_4,\ip{\zeta_{\ell^2}^aq_{\ell^2}})\\
  &&\hspace{9em}\cdot\E_{\ell^2}(\kappa)
  (\Tate(q)/\ip{\zeta_{\ell^2}^aq_{\ell^2}},
  (\mu_p+\Tate(q)[\ell^2])/  \ip{\zeta_{\ell^2}^aq_{\ell^2}}) \\ &=& 
  \sum_{a=0}^{\ell^2-1}F(\Tate(\zeta_{\ell^2}^aq_{\ell^2}),\zeta)
  \Theta_{\ell^2}(\Tate(q),\zeta_4,\ip{\zeta_{\ell^2}^aq_{\ell^2}})\\ &&
  \hspace{9em}\cdot
  \E_{\ell^2}(\kappa)(\Tate(\zeta_{\ell^2}^aq_{\ell^2}), \mu_{p\ell^2})
\end{eqnarray*}
By (\ref{Edefinition}) we have 
\begin{eqnarray*}
  \E_{\ell^2}(\Tate(\zeta_{\ell^2}^aq_{\ell^2}),\mu_{p\ell^2}) &=&
  \E_\ell(\Tate(\zeta_{\ell^2}^aq_{\ell^2}),\mu_{p\ell})\E_\ell
  (\Tate(\zeta_{\ell^2}^aq_{\ell^2})/\mu_\ell,\mu_{p\ell^2}/\mu_\ell)
  \\ &=& \E_\ell(\Tate(\zeta_{\ell^2}^aq_{\ell^2}),\mu_{p\ell})
  \E_\ell (\Tate(\zeta_\ell^aq_\ell),\mu_{p\ell}) \\ &=&
  \frac{E(\zeta_{\ell^2}^aq_{\ell^2})}
  {E(\zeta_\ell^aq_\ell)} \frac{E(\zeta_\ell^aq_\ell)}
  {E (q)} = \frac{E(\zeta_{\ell^2}^aq_{\ell^2})}
  {E(q)} 
\end{eqnarray*}
Referring to \cite{mfhi}, we find
$$\Theta_{\ell^2}(\Tate(q),\zeta_4,\ip{\zeta_{\ell^2}^aq_{\ell^2}}) =
\frac{\theta(\zeta_{\ell^2}^aq_{\ell^2})} {\theta(q)}.$$  Thus the
total contribution of this collection of subgroups is 
$$\sum_{a=0}^{\ell^2-1} \frac{\sum a_n(\zeta_{\ell^2}^aq_{\ell^2})^n}
{\theta(\zeta_{\ell^2}^aq_{\ell^2})E_\kappa(\zeta_{\ell^2}^aq_{\ell^2})}
\frac{\theta(\zeta_{\ell^2}^aq_{\ell^2})} {\theta(q)}
\frac{E_\kappa(\zeta_{\ell^2}^aq_{\ell^2})} {E_\kappa (q)} =
\ell^2\frac{\sum a_{\ell^2n}q^n} {\theta(q)E_\kappa(q)}.$$

The subgroups $\ip{\zeta_{\ell^2}^bq_\ell}$ contribute
\begin{eqnarray*}
  \lefteqn{ \sum_{b=1}^{\ell-1}
  F(\Tate(q)/\ip{\zeta_{\ell^2}^bq_\ell},
  \zeta/\ip{\zeta_{\ell^2}^bq_\ell})
  \H_{\ell^2}(\kappa)(\Tate(q),\zeta,\ip{\zeta_{\ell^2}^bq_\ell}) }
  \\ &=& \sum_{b=1}^{\ell-1} F(\Tate(\zeta_\ell^b q),\zeta^\ell)
  \Theta_{\ell^2}(\Tate(q),\zeta_4,\ip{\zeta_{\ell^2}^bq_\ell})\\ &&
  \hspace{9em}\cdot
  \pi^*\E_{\ell^2}(\kappa) (\Tate(q),\zeta_p, \ip{\zeta_{\ell^2}^bq_\ell}) \\
  &=& \sum_{b=1}^{\ell-1}  F(\Tate(\zeta_\ell^bq),\zeta^\ell)
  \Theta_{\ell^2}(\Tate(q),\zeta_4,\ip{\zeta_{\ell^2}^bq_\ell}) \\ &&
  \hspace{9em}\cdot
  \E_{\ell^2}(\kappa)(\Tate(q)/\ip{\zeta_{\ell^2}^bq_\ell},
  (\mu_p+\Tate(q)[\ell^2])/\ip{\zeta_{\ell^2}^bq_\ell}) \\
       &=& \sum_{b=1}^{\ell-1}  F(\Tate(\zeta_\ell^bq),\zeta^\ell)
  \Theta_{\ell^2}(\Tate(q),\zeta_4,\ip{\zeta_{\ell^2}^bq_\ell})\\ &&
  \hspace{9em}\cdot
  \E_{\ell^2}(\kappa)(\Tate(\zeta_\ell^bq), \mu_p+\ip{q_\ell})
\end{eqnarray*}
By (\ref{Edefinition}) and Lemma \ref{moreqexpansions} we have
\begin{eqnarray*}
\lefteqn{  \E_{\ell^2}(\Tate(\zeta_\ell^bq),\mu_p+\ip{q_\ell}) } \\ &=&
  \E_\ell(\Tate(\zeta_\ell^bq),\mu_p+\ip{q})
  \E_\ell(\Tate(\zeta_\ell^bq)/\mu_\ell,(\mu_p+\ip{q_\ell})/\mu_\ell)
  \\ &=&
  \E_\ell(\Tate(\zeta_\ell^bq),\mu_{p\ell})\E_\ell(\Tate(q^\ell),\mu_p+
  \ip{q}) \\ &=& \frac{E(\zeta_\ell^bq)}
  {E(q^\ell)} \cdot [\ip{\ell}]\frac{E(q^\ell)}
  {E(q)} = [\ip{\ell}]\frac{E(\zeta_\ell^bq)} {E(q)}
\end{eqnarray*}
When specialized to $\kappa$, this becomes
$$\kappa(\ip{\ell})\frac{E_\kappa(\zeta_\ell^bq)} {E_\kappa(q)}.$$
Referring to \cite{mfhi} we find
$$\Theta_{\ell^2}(\Tate(q),\zeta_4,\ip{\zeta_\ell^bq}) =
\left(\!\frac{-1}{\ell}\!\right)\mathfrak{g}_\ell(\zeta_\ell^b)
\frac{\theta(\zeta_\ell^bq)} {\theta(q)}$$
where
$$\mathfrak{g}_{\ell}(\zeta) =
\sum_{m=1}^{\ell-1}\left(\!\frac{m}{\ell}\!\right)\zeta^m$$ is the
Gauss sum associated to the $\ell^{\small\mathrm{th}}$ root of unity
$\zeta$.  
Thus the total contribution of this third
collection of subgroups is
\begin{eqnarray*}
\lefteqn{
\sum_{b=1}^{\ell-1} \frac{\chi(\ell)(-1/\ell)^i\tau(\ell)^i\sum
  a_n(\zeta_\ell^bq)^n}{\theta(\zeta_\ell^bq)E_\kappa(\zeta_\ell^bq)}
\left(\!\frac{-1}{\ell}\!\right)\mathfrak{g}_\ell(\zeta_\ell^b)
\frac{\theta(\zeta_\ell^bq)} {\theta(q)}
\kappa(\ip{\ell})\frac{E_\kappa(\zeta_\ell^bq)} {E_\kappa(q)} } \\ &=& 
\kappa(\ip{\ell})\chi(\ell)\left(\!\frac{-1}{\ell}\!
\right)^{i+1}\tau(\ell)^i  
\frac{\mathfrak{g}_\ell(\zeta_\ell)}
{\theta(q)E_\kappa(q)} \sum_n a_n\left(\sum_{b=1}^{\ell-1}
  \zeta_\ell^{bn}\left(\!\frac{b}{\ell}\!\right)\right)q^n \\ &=&
\kappa(\ip{\ell})\chi(\ell)\left(\!\frac{-1}
  {\ell}\!\right)^{i+1}\tau(\ell)^i
\frac{\mathfrak{g}_\ell(\zeta_\ell)} {\theta(q)E_\kappa(q)}\sum_n a_n
\left(\!\frac{n}{\ell}\!\right)\mathfrak{g}_\ell(\zeta_\ell)q^n \\ &=&
\kappa(\ip{\ell})\chi(\ell)\left(\!\frac{-1}{\ell}\!\right)^i\tau(\ell)^i
\frac{\ell\sum 
  \left(\!\frac{n}{\ell}\!\right) a_n q^n}
{\theta(q)E_\kappa(q)}
\end{eqnarray*}

Adding all this up and plugging into (\ref{qexp}) we see that the
$q$-expansion of $T_{\ell^2}F$ is $\sum b_nq^n$ where 
\begin{eqnarray*}
b_n & = & 
a_{\ell^2 n} +
\kappa(\ip{\ell})\ell^{-1}\chi(\ell)\left(\!\frac{-1}{\ell}\!\right)^i
\tau(\ell)^i\left(\!\frac{n}{\ell}\!\right) a_n
+
\kappa(\ip{\ell})^2\ell^{-1}\chi(\ell)^2\tau(\ell)^{2i}a_{n/\ell^2}
\\ &=& 
a_{\ell^2 n} +
\kappa(\ell)\ell^{-1}\chi(\ell)\left(\!\frac{(-1)^in}{\ell}\!\right) a_n +
\kappa(\ell)^2\ell^{-1}\chi(\ell)^2a_{n/\ell^2}.
\end{eqnarray*}

\end{proof}

\section{Classical weights and classical forms}\label{classicality}

In this section we define classical subspaces of our spaces of modular
forms and prove the following analog of Coleman's theorem on
overconvergent forms of low slope.  Throughout this section $k$ will
denote an odd positive integer and we set $\lambda=(k-1)/2$.
\begin{theo}\label{lowslopeisclassical}
  Let $m$ be a positive integer, let
  $\psi:(\Z/\q p^{m-1}\Z)^\times\longrightarrow K^\times$ be a character, and
  define $\kappa(x)=x^\lambda\psi(x)$. If $F\in
  \widetilde{M}^\dagger_{\kappa}(4N,K)$ satisfies $U_{p^2}F=\alpha F$
  with $v(\alpha)<2\lambda-1$, then $F$ is classical.
\end{theo}
Our proof follows the approach of Kassaei (\cite{kassaei}), which is
modular in nature and builds the classical form by analytic
continuation and gluing.  The term ``analytic continuation'' has
little meaning here since we have only defined our modular forms over
restricted regions on the modular curve, owing to the need to avoid
Eisenstein zeros.  To get around this difficulty, we must invoke the
previous formalism of the author for $p$-adic modular forms of
\emph{classical} half-integral weight (see \cite{mfhi}).

Let $N$ be a positive integer.  In \cite{mfhi} we
defined the space of modular forms of weight $k/2$ and level $4N$ over
a $\Z[1/4N]$-algebra $R$ to be the $R$-module
$$\widetilde{M}'_{k/2}(4N,R) := H^0(X_1(4N)_R,\OO(k\Sigma_{4N})).$$
Note that this space was denoted $M_{k/2}(4N,R)$ and $k\Sigma_{4N}$
was denoted $\Sigma_{4N,k}$ in \cite{mfhi}.  Roughly speaking, in this
space of forms we have divided by $\theta^k$ to reduce to weight zero
instead of $E_\lambda\theta$.  Let $r\in [0,1]\cap \Q$ and define
$$\widetilde{M}'_{k/2}(4Np^m,K,p^{-r}) = H^0(X_1(4Np^m)^\an_{\geq
  p^{-r}},\OO(k\Sigma_{4Np^m})).$$
It is an easy matter to check that the
construction of the Hecke operators $T_{\ell^2}$ and $U_{p^2}$ in
Section \ref{sec:hecke} (using $H = \Theta_{\ell^2}^k$) adapts to this
space of forms and furnishes us with Hecke operators having the
expected effect on $q$-expansions.  We will briefly review the
construction of $U_{p^2}$ in this context later in this section.  

The next proposition relates these spaces of $p$-adic modular forms to
the ones defined in this paper, and will ensure that the latter spaces
(and consequently the eigencurve defined later in this paper) see the
classical half-integral weight modular forms of arbitrary $p$-power
level.  Note that this identification requires the knowledge of the
action of the diamond operators at $p$ because this data is part of
the $p$-adic weight character.

\begin{prop}\label{comparisonofdefs}
  Let $m$ be a positive integer, let
  $\psi:(\Z/\q p^{m-1}\Z)^\times\longrightarrow K^\times$ be a character, and
  define $\kappa(x) = x^\lambda\psi(x)$.  Then, for $0\leq r\leq r_m$,
  the space
  $$\widetilde{M}'(4Np^{m+1}/\q,K,p^{-r})^{\ip{}^*_{\q p^{m-1}}=\psi}=
  \left\{\!\begin{array}{cc}
  \widetilde{M}'_{k/2}(4Np^m,K,p^{-r})^{\ip{}^*_{p^m}=\psi} & p\neq
  2\\ \widetilde{M}'_{k/2}(2^{m+1}N,K,p^{-r})^{\ip{}^*_{2^{m+1}}=\psi} &
  p=2
\end{array}\right.$$
is isomorphic to 
  $\widetilde{M}_{\kappa}(4N,K,p^{-r})$ in a manner
  compatible with the action of the Hecke operators and tame diamond
  operators.
\end{prop}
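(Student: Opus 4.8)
The plan is to exhibit the isomorphism as multiplication by the ratio $E_\kappa(q)\theta(q)/\theta(q)^k = E_\kappa(q)/\theta(q)^{k-1}$ on $q$-expansions, realized geometrically as multiplication by a suitable rational function. More precisely, both spaces are built from the same weight-zero formalism: an element of $\widetilde{M}_\kappa(4N,K,p^{-r})$ is (a $\tau^i$-eigensection of) $\OO(\Sigma_{4Np})$ on $X_1(4Np)^\an_{\geq p^{-r}}$ (the case $p\neq 2$; the case $p=2$ is identical with $4N$ in place of $4Np$), while an element of $\widetilde{M}'_{k/2}(4Np^m,K,p^{-r})^{\ip{}^*_{p^m}=\psi}$ is a $\psi$-eigensection of $\OO(k\Sigma_{4Np^m})$ on $X_1(4Np^m)^\an_{\geq p^{-r}}$. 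First I would pull both spaces back to a common curve: the natural degeneracy map $X_1(4Np^{m+1}/\q)^\an_{\geq p^{-r}}\to X_1(4Np)^\an_{\geq p^{-r}}$ lets us view the first space (after the stated identification $\widetilde{M}'(4Np^{m+1}/\q,K,p^{-r})^{\ip{}^*_{\q p^{m-1}}=\psi}$, which unwinds the definitions so that the $p$-part of the diamond action matches the weight character $\kappa$) as living on $X_1(4Np^{m+1}/\q)^\an_{\geq p^{-r}}$. The content is then to identify, on this curve, the line bundle $\OO(\Sigma)$ tensored with the divisor of $E_\kappa\theta$ against $\OO(k\Sigma)$ tensored with the divisor of $\theta^k$.

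The key geometric input is the behavior of $E_\kappa$ and $\theta$ on the relevant region. By Proposition \ref{twistingfunction} and the discussion preceding it, $E(q)$ (hence $E_\kappa$ after specialization) extends to an invertible function on $X_0(p)^\an_{\geq p^{-r_n}}$, so $E_\kappa$ contributes \emph{no} divisor on $X_1(4Np^{m+1}/\q)^\an_{\geq p^{-r}}$ once $r\leq r_m$ — this is precisely why we must shrink the overconvergence radius and why the hypothesis $0\leq r\leq r_m$ appears. The Jacobi theta function $\theta$, on the other hand, has divisor exactly $\Sigma_{4N}$ (pulled back appropriately), so $\div(E_\kappa\theta)$ and $\div(\theta^k)$ differ by $(1-k)\Sigma = -2\lambda\,\Sigma$ in the notation where $k=2\lambda+1$. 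Therefore multiplication by $E_\kappa/\theta^{2\lambda}$ — a well-defined invertible-times-meromorphic function on the overconvergent region, with divisor exactly $k\Sigma_{4Np^{m+1}/\q}-\Sigma_{4Np^{m+1}/\q}$ along the cusps after pulling back — carries $H^0(\OO(k\Sigma))$ isomorphically onto $H^0(\OO(\Sigma))$. One then checks that this map intertwines the diamond actions correctly: the tame diamond operators act the same way on both sides by construction, while the diamond operators at $p$ on the primed space (cutting out the $\psi$-eigenspace) correspond under the identification to the $\tau^i$-eigenspace condition built into $\widetilde{M}_\kappa$, because $E_\kappa$ transforms under $\ip{d}_p$ by $\kappa(d)=\psi(d)\tau(d)^i$ — this is exactly the bookkeeping remark made just before Proposition \ref{comparisonofdefs}.

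Finally, Hecke-equivariance: both families of operators $T_{\ell^2}$, $U_{\ell^2}$, $U_{p^2}$ are defined by the same correspondence-plus-multiplication recipe, the only difference being that one multiplies by $\H_{\ell^2}(\kappa)=\Theta_{\ell^2}\cdot\pi^*\E_{\ell^2}(\kappa)$ in the new setting versus $\Theta_{\ell^2}^k$ in the old. Since our isomorphism is given by multiplication by a function pulled back from the base of the correspondence (it depends only on $E_\kappa$ and $\theta$, which are defined downstairs), it commutes with $\pi_1^*$, $\pi_2^*$, and $\pi_{1*}$; and the compatibility of the two multiplication steps reduces to the $q$-expansion identity $\pi^*\E_{\ell^2}(\kappa)(\Tate(q),\dots)=E_\kappa(q_{\ell^2})/E_\kappa(q)$ established in Section \ref{sec:somefunctions} together with the known $q$-expansions of $\Theta_{\ell^2}$. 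I expect the main obstacle to be the careful comparison of divisors of $\theta$ and $E_\kappa$ on the various moduli curves — in particular verifying that, after the degeneracy-map pullbacks, $\div(\theta)$ really is the integer part of the appropriate multiple of $\Sigma$ and that $E_\kappa$ is genuinely invertible (no spurious zeros or poles at supersingular points in the allowed region), which is where the constraint $r\leq r_m$ and the results of \cite{buzzardeigenvarieties} on analytic continuation of the Eisenstein family are essential; the rest is bookkeeping with diamond operators and a routine $q$-expansion check for the Hecke compatibility.
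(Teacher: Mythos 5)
Your overall strategy — multiply by a ratio of $E_\kappa$ and a power of $\theta$, pulled back via GAGA and the $q$-expansion principle, then track divisors and diamond eigenspaces — is essentially the paper's strategy.  However there is a genuine gap, plus two smaller slips worth flagging.

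The gap is in the passage between curves of different $p$-levels.  You invoke a ``natural degeneracy map $X_1(4Np^{m+1}/\q)^\an_{\geq p^{-r}}\to X_1(4Np)^\an_{\geq p^{-r}}$'' to relate the two spaces, but you never justify that this map, or the corresponding quotient of $X_1(4Np^{m+1}/\q)^\an_{\geq p^{-r}}$ by the diamond automorphisms $\ip{d}_{\q p^{m-1}}$ with $d\equiv 1\pmod{\q}$, is actually an \emph{isomorphism} onto $X_1(4Np)^\an_{\geq p^{-r}}$ (resp.\ $X_1(4N)^\an_{\geq 2^{-r}}$), nor that the divisor $\Sigma_{4Np}$ pulls back exactly to $\Sigma_{4Np^{m+1}/\q}$.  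This is precisely where the paper's proof does real geometric work: the map $(E,P)\mapsto(E,aP)$ (for a suitably chosen $a$) induces such an isomorphism only because the region $\{v(E)\leq r\}$ with $r\leq r_m< p^{2-m}/\q(1+p)$ carries a canonical subgroup of order $\q p^{m-1}$, which rigidifies the level-$p$ part of the moduli problem.  Your observation that $r\leq r_m$ is needed for the invertibility of $E_\kappa$ is correct but is only half of why that hypothesis appears; the other half — existence of $H_{\q p^{m-1}}$ — is what makes the level-lowering step go through, and without it the comparison of line bundles across levels is unproved.

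Two smaller points.  First, you have the multiplier reversed: $\div(E_\kappa/\theta^{2\lambda}) = -(k-1)\Sigma$ (not $+(k-1)\Sigma$ as you claim), so multiplication by $E_\kappa/\theta^{2\lambda}$ sends sections of $\OO(k\Sigma)$ into sections of $\OO((2k-1)\Sigma)$; the map you want, from $\OO(k\Sigma)$ to $\OO(\Sigma)$, is multiplication by $\theta^{k-1}/E_{\kappa\tau^{-i}}$, which is the one the paper uses.  (Since it is an isomorphism, this is mostly notational, but be careful which direction you are constructing.)  Second, ``the tame diamond operators act the same way on both sides by construction'' undersells a genuine subtlety: the identification is tame-diamond-equivariant \emph{because} the action on $\widetilde{M}_\kappa$ was defined with the twist by $(-1/\cdot)^i$ (for $p\neq 2$), and this twist is precisely the automorphy-factor correction coming from $\theta^{k-1}$.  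Without recalling that the twisted convention is in force, the eigencharacter bookkeeping under $\ip{}^*_{\q p^{m-1}}$ would not close up.
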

\begin{proof}
  Let $i$ be such that $\kappa\in \scr{W}^i$.  The complex-analytic
  modular forms $\theta^{k-1}$ and $E_{\kappa\tau^{-i}}$ are each of
  weight $\lambda$.  If $p\neq 2$, then the former is invariant under
  the $\ip{d}^*_{\q p^{m-1}}$ while if $p=2$ it has eigencharacter
  $(-1/\cdot)^i$.  The latter has eigencharacter $\psi\tau^{-i}$ for
  this action in both cases.  Standard arguments using GAGA and the
  $q$-expansion principle show that the ratio
  $\theta^{k-1}/E_{\kappa\tau^{-i}}$ furnishes an algebraic rational
  function on $X_1(4Np^{m+1}/\q)_K$.  Passing to the $p$-adic
  analytification and restricting to $X_1(4Np^{m+1}/\q)^\an_{\geq
    p^{-r}}$, we see that this function has divisor
  $(k-1)\Sigma_{4Np^{m+1}/\q}$, since $E_{\kappa\tau^{-i}}$ is
  invertible in this region for $r$ as in the statement of the
  proposition (because $\kappa\in\scr{W}_m$).

  Let $F'\in \widetilde{M}'_{k/2}(4Np^{m+1}/\q,K,p^{-r})$ be a form
  with eigencharacter $\psi$ for $\ip{}^*_{\q p^{m-1}}$ and let $$F =
  F'\cdot\frac{\theta^{k-1}}{E_{\kappa\tau^{-i}}}$$ Then, for $d\in
  (\Z/\q p^{m-1}\Z)^\times$ we have $\ip{d}_{\q p^{m-1}}^*F =
  \tau(d)^i(-1/\cdot)^iF$.  In particular, $F$ is fixed by
  $\ip{d}_{p^m}^*$ with $d\equiv 1\pmod{\q}$.  The construction of the
  canonical subgroup of order $\q p^{m-1}$ (defined because $r\leq
  r_m<p^{2-m}/\q(1+p)$) ensures that the map
  \begin{equation}\label{quot}
  X_1(4Np^{m+1}/\q)^\an_{\geq p^{-r}}/\{\ip{d}_{\q
    p^{m-1}}\ |\ d\equiv 1\!\!\pmod{\q}\}\longrightarrow
  \left\{\!\begin{array}{cc} X_1(4Np)^\an_{\geq p^{-r}} & p\neq 2
    \\ X_1(4N)^\an_{\geq 2^{-r}} & p=2\end{array}\right.
  \end{equation}
  induced
  by $$(E,P)\longmapsto (E,aP)$$ where the integer $a$ is chosen so that 
  $$\left\{\!\begin{array}{cc}
  a\equiv p^{m-1} \pmod{p^m}\ \ \mbox{and}\ \ a\equiv 1\pmod{4N} & p\neq 2
  \\ a\equiv 2^{m-1}\pmod{2^{m+1}}\ \ \mbox{and}\ \ a\equiv 1\pmod{N} & p=2
  \end{array}\right.$$
  is an isomorphism.  This map pulls the divisor $\Sigma_{4Np}$ (or
  $\Sigma_{4N}$ if $p=2$) back to $\Sigma_{4Np^m}$
  (resp. $\Sigma_{2^{m+1}N}$ if $p=2$), so we conclude that $F$
  descends to a section of $\OO(\Sigma_{4Np})$ on $X_1(4Np)^\an_{\geq
    p^{-r}}$ (resp. a section of $\OO(\Sigma_{4N})$ on
  $X_1(4N)^\an_{\geq 2^{-r}}$) and that this section satisfies
  $\ip{d}_p^*F = \tau(d)^iF$ (resp. $\ip{d}_4^*F =
  \tau(d)^i(-1/d)^iF$) for all $d\in(\Z/\q\Z)^\times$.  Thus we may
  regard $F$ as an element of $\widetilde{M}_{\kappa}(4N,K,p^{-r})$.
  Conversely, for $F\in \widetilde{M}_{\kappa}(4N,K,p^{-r})$, it is
  easy to see that $$F\cdot
  \frac{E_{\kappa\tau^{-i}}}{\theta^{k-1}}\in
  \widetilde{M}'_{k/2}(4Np^{m+1}/\q,K,p^{-r})^{\ip{}_{\q
      p^{m-1}}=\psi}$$ (where $F$ is implicitly pulled back via the
  above map (\ref{quot})) and that this furnishes an inverse to the
  above map $F'\mapsto F$.  That these maps are equivariant with
  respect to the Hecke action is a formal manipulation with the setup
  in Section \ref{sec:hecke} used to define the action on both sides.
  That it is equivariant with respect to tame diamond operators is
  trivial, but relies essentially on the ``twisted'' convention for
  this action on $\widetilde{M}_{\kappa}(4N,K,p^{-r})$ (for $p\neq
  2$).
\end{proof}

In general, if $\scr{U}$ is a connected admissible open in
$X_1(4Np^{m+1}/\q)^\an_K$ containing $X_1(4Np^{m+1}/\q)^\an_{\geq
  p^{-r}}$ and $F\in \widetilde{M}_{\kappa}(4N,K,p^{-r})$ (with
$\kappa$ as in the previous proposition) we will say that $F$
analytically continues to $\scr{U}$ if the corresponding form
$F'\in\widetilde{M}'_{k/2}(4Np^{m+1}/\q,K,p^{-r})$ analytically continues to
an element of
\begin{equation}\label{formsoveru}
H^0(\scr{U},\OO(k\Sigma_{4Np^{m+1}/\q})).
\end{equation}  
Note that, in case $\scr{U}$ is preserved by the diamond operators at
$p$, this analytic continuation automatically lies in the
$\psi$-eigenspace of (\ref{formsoveru}) since
$G-\ip{d}_{\q p^{m-1}}^*G$ vanishes on the nonempty admissible open
$X_1(4Np^{m+1}/\q)^\an_{\geq p^{-r}}$ for all $d$, and hence must
vanish on all of $\scr{U}$.  In particular, in case $\scr{U} =
X_1(4Np^{m+1}/\q)^\an_K$ we make the following definition.
\begin{defi}
  Let $\kappa(x) = x^\lambda\psi(x)$ be as in Proposition
  \ref{comparisonofdefs}.  An element $F\in
  \widetilde{M}_{\kappa}(4N,K)^\dagger$ is called \emph{classical} if
  it analytically continues in the sense described above to all of
  $X_1(4Np^{m+1}/\q)^\an_K$.  That is, if it is in the image of the
  (injective) map 
\begin{eqnarray*} 
   H^0(X_1(4Np^{m+1}/\q)^\an_K,
      \OO(k\Sigma_{4Np}))^{\ip{}_{p^m}=\psi}  &\longrightarrow &
      \widetilde{M}'_{k/2}(4Np^{m+1}/\q,K,
      p^{-r_m})^{\ip{}_{p^m}=\psi} \\ &\cong& 
      \widetilde{M}_{\kappa}(4N,K,p^{-r_m})  \\ &\hookrightarrow &
      \widetilde{M}_{\kappa}(4N,K)^\dagger
\end{eqnarray*}
\end{defi}

The analytic continuation used to prove Theorem
\ref{lowslopeisclassical} will proceed in three steps.  All of them
involve the construction of the operator $U_{p^2}$ on
$\widetilde{M}'_{k/2}(4Np^{m+1}/\q,K,p^{-r})$, which goes as follows.  Let
$$\pi_1,\pi_2 : X_1(4Np^{m+1}/\q,p^2)^\an_K \longrightarrow
X_1(4Np^{m+1}/\q)^\an_K$$ be the usual pair of maps and let
$\Theta_{p^2}$ denote the rational function on $X_1(4,p^2)_{\Q}$ from
Section \ref{sec:somefunctions}.  For any pair of admissible open
$\scr{U}$ and $\scr{V}$ in $X_1(4Np^{m+1}/\q)^\an_K$ with
$$\pi_1^{-1}\scr{V} \subseteq \pi_2^{-1}\scr{U}$$ we have the map 
\begin{eqnarray*}
  H^0(\scr{U},\OO(k\Sigma_{4Np^{m+1}/\q}))  & \longrightarrow &
  H^0(\scr{V},\OO(k\Sigma_{4Np^{m+1}/\q})) \\ F & \longmapsto &
 \frac{1}{p^2} \pi_{1*}(\pi_2^*F   \cdot \Theta_{p^2}^k)
\end{eqnarray*}
Note that there is no need to introduce the space $\scr{Z}$ as in
Section \ref{sec:hecke} since our ``twisting'' section
$\Theta_{p^2}^k$ is defined on all of $X_1(4Np^{m+1}/\q,p^2)^\an_K$.
Also, recall from Section \ref{sec:hecke} that if $0\leq r<1/p(1+p)$ we have
$$\pi_1^{-1} (X_1(4Np^{m+1}/\q)^\an_{\geq p^{-p^2r}})\subseteq
\pi_2^{-1}(X_1(4Np^{m+1}/\q)^\an_{\geq p^{-r}})$$ Thus if $F\in
\widetilde{M}'_{k/2}(4Np^{m+1}/\q,K,p^{-r})$ with $r<1/p(1+p)$ then
$U_{p^2}F$ analytically continues to $X_1(4Np^{m+1}/\q)^\an_{\geq
  p^{-p^2r}}$.  From this simple observation we get the first and
easiest analytic continuation result.
\begin{prop}\label{prop:an1}
  Let $r>0$ and let $F\in \widetilde{M}'_{k/2}(4Np^{m+1}/\q,K,p^{-r})$.
  Suppose that there exists a polynomial $P(T)\in K[T]$ with $P(0)\neq
  0$ such that $P(U_{p^2})F$ analytically continues to
  $X_1(4Np^{m+1}/\q)^\an_{\geq p^{-1/(1+p)}}$.  Then $F$ analytically
  continues to this region as well.
\end{prop}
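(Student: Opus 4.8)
The plan is to exploit the fact that $U_{p^2}$ improves the radius of overconvergence: if $F$ is defined on $X_1(4Np^{m+1}/\q)^\an_{\geq p^{-r}}$ with $r$ small, then $U_{p^2}F$ extends to the larger region $X_1(4Np^{m+1}/\q)^\an_{\geq p^{-p^2r}}$, and more generally $U_{p^2}^j F$ extends to $X_1(4Np^{m+1}/\q)^\an_{\geq p^{-p^{2j}r}}$ as long as we stay in the range where the quotient-by-canonical-subgroup estimates apply (i.e. while $p^{2j}r < 1/p(1+p)$, equivalently while the region is still inside the locus $v(E) < 1/p(1+p)$ where $\pi_1^{-1}$ of a smaller region sits inside $\pi_2^{-1}$ of a bigger one). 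Iterating, after finitely many applications of $U_{p^2}$ any fixed overconvergent form extends all the way to $X_1(4Np^{m+1}/\q)^\an_{\geq p^{-1/(1+p)}}$: once $p^{2j}r \geq 1/p(1+p)$ (but the previous one was still below), one more application lands us on a region containing $X_1(4Np^{m+1}/\q)^\an_{\geq p^{-1/(1+p)}}$. So the point is purely formal: $U_{p^2}^j F$ is automatically classical-region-overconvergent (extends to the region $X_1(\cdots)^\an_{\geq p^{-1/(1+p)}}$) for all $j$ sufficiently large.

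Now write $P(T) = \sum_{i=0}^d c_i T^i$ with $c_0 = P(0) \neq 0$. By hypothesis $P(U_{p^2})F$ extends to $X_1(4Np^{m+1}/\q)^\an_{\geq p^{-1/(1+p)}}$. Choose $j$ large enough (as above) that $U_{p^2}^j F$, $U_{p^2}^{j+1}F, \ldots, U_{p^2}^{j+d}F$ all extend to that region. Then the combination
$$c_0 U_{p^2}^j F = U_{p^2}^j\bigl(P(U_{p^2})F\bigr) - \sum_{i=1}^d c_i\, U_{p^2}^{\,j+i} F$$
also extends to $X_1(4Np^{m+1}/\q)^\an_{\geq p^{-1/(1+p)}}$, using that $U_{p^2}$ maps forms overconvergent on that region into forms overconvergent on (a possibly larger region containing) it — here one uses again that $0 < 1/(1+p) < 1/p(1+p)$ is not the relevant constraint; rather, on the region $v(E) \leq 1/(1+p)$ the map $\pi_1$ restricted to the relevant piece of $X_1(\cdots,p^2)^\an$ is finite flat onto this region and $U_{p^2}$ is genuinely an endomorphism of $H^0(X_1(4Np^{m+1}/\q)^\an_{\geq p^{-1/(1+p)}}, \OO(k\Sigma))$, which is standard. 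Since $c_0 \neq 0$, we conclude $U_{p^2}^j F$ extends to $X_1(4Np^{m+1}/\q)^\an_{\geq p^{-1/(1+p)}}$.

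It remains to descend from $U_{p^2}^j F$ back to $F$. This is where the compactness of $U_{p^2}$ enters through a spectral-decomposition argument in the style of Coleman: on the Banach space $\widetilde{M}'_{k/2}(4Np^{m+1}/\q, K, p^{-r})$ the operator $U_{p^2}$ is compact, so the generalized eigenspace (or finite-slope part) containing $F$ is finite-dimensional and $U_{p^2}$ acts invertibly on it after extension of scalars. More precisely, I would argue: the form $F$ lies in the span of generalized $U_{p^2}$-eigenforms; each such eigenform with nonzero eigenvalue, when hit by a suitable power $U_{p^2}^j$, is simply a nonzero scalar multiple of an overconvergent-on-$X_1(\cdots)^\an_{\geq p^{-1/(1+p)}}$ form (by the previous paragraph, applied to each eigencomponent), hence itself extends; and the component of $F$ with eigenvalue $0$ is killed by $U_{p^2}^j$ for $j$ large, but $U_{p^2}^j F$ extending forces this nilpotent part to extend too once we subtract. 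The cleanest packaging is to note that, enlarging $K$ if necessary, $F = F_{\mathrm{fin}} + F_{\mathrm{nil}}$ where $U_{p^2}$ is invertible on the closed subspace containing $F_{\mathrm{fin}}$ and nilpotent on $F_{\mathrm{nil}}$, so $F_{\mathrm{fin}} = U_{p^2}^{-j} U_{p^2}^j F_{\mathrm{fin}}$ inherits analytic continuation from $U_{p^2}^j F_{\mathrm{fin}}$, while $F_{\mathrm{nil}} = F - F_{\mathrm{fin}}$ is a finite sum whose continuation reduces to the same iterated-$U_{p^2}$ bootstrap.

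The main obstacle, I expect, is the very last descent step — making rigorous the passage from ``$U_{p^2}^j F$ continues'' to ``$F$ continues.'' The iteration that grows the radius is essentially automatic from the geometry already set up (Section \ref{sec:hecke} and Theorem 3.3(v) of \cite{buzzard}), and the polynomial manipulation is trivial linear algebra; but to invert $U_{p^2}$ one genuinely needs its compactness (Proposition A5.2 of \cite{coleman}) together with Riesz theory to split off the finite-slope part, and one must be careful that this spectral decomposition is compatible with the analytic-continuation functional on forms. I would handle this by working in the Banach space $H^0(X_1(4Np^{m+1}/\q)^\an_{\geq p^{-r}}, \OO(k\Sigma_{4Np^{m+1}/\q}))$ and using that the restriction map to $H^0$ of the smaller region is the compact one, exactly as in Coleman's original argument, so that the kernel of ``extends to $X_1(\cdots)^\an_{\geq p^{-1/(1+p)}}$'' is a closed $U_{p^2}$-stable subspace containing a cofinite piece — and then $c_0\neq 0$ does the rest.
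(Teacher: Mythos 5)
Your geometric intuition is right that $U_{p^2}$ improves the overconvergence radius, and this is indeed the engine of the proof. But the way you deploy the hypothesis $P(0)\neq 0$ is both more complicated than necessary and, more importantly, has a genuine gap.

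The gap is in the descent step. You propose to split $F = F_{\mathrm{fin}} + F_{\mathrm{nil}}$ via Riesz theory and invert $U_{p^2}$ on the finite-slope part. But first, for a compact operator there is no global decomposition into a finite-dimensional piece and a piece on which the operator is nilpotent: the slopes accumulate at $\infty$, and on the ``slope-$\infty$'' complement $U_{p^2}$ is not nilpotent, only topologically so. Second, and fatally, even granting some piece $F_{\mathrm{nil}}$ with $U_{p^2}^j F_{\mathrm{nil}} = 0$ for $j$ large, the observation that $U_{p^2}^j F_{\mathrm{nil}} = 0$ extends is vacuous --- it tells you nothing about whether $F_{\mathrm{nil}}$ itself extends, and your phrase ``reduces to the same iterated-$U_{p^2}$ bootstrap'' does not close this hole. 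You also never actually use the hypothesis on $P(U_{p^2})F$ in an essential way: your displayed identity for $c_0 U_{p^2}^j F$ only re-derives something you already have from the automatic radius improvement of $U_{p^2}^j$.

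The fix is to invert the scalar $P(0)$ rather than the operator $U_{p^2}$. Write $P(T) = a + P_0(T)$ with $a = P(0)\neq 0$ and $P_0(0)=0$, so $P_0(T) = T R(T)$, and rearrange to
$$F = \frac{1}{a}\bigl(P(U_{p^2})F - P_0(U_{p^2})F\bigr).$$
The first term extends to $X_1(4Np^{m+1}/\q)^\an_{\geq p^{-1/(1+p)}}$ by hypothesis, and the second term $P_0(U_{p^2})F = U_{p^2}\bigl(R(U_{p^2})F\bigr)$ extends to $X_1(4Np^{m+1}/\q)^\an_{\geq p^{-p^2 r}}$ precisely because $U_{p^2}$ improves the radius (provided $r < 1/p(1+p)$). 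So the whole right-hand side, and hence $F$, extends to radius $p^2 r$. Now iterate: since $r>0$, finitely many iterations reach radius $\geq 1/p(1+p)$, and one final application from $r = 1/p^2(1+p)$ lands on $1/(1+p)$. No compactness, no Riesz theory, no eigendecomposition is needed --- and the slope-$\infty$ part is handled automatically because you never divide by anything except the nonzero scalar $a$.
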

\begin{proof}
  Write $P(T) = P_0(T)+a$ with $P_0(0)=0$ and $a\neq 0$.  Then $$F =
  \frac{1}{a}\left( P(U_{p^2})F-P_0(U_{p^2})F\right).$$ If
  $0<r<1/p(1+p)$, then the right side analytically continues to
  $X_1(4Np^{m+1}/\q)^\an_{\geq p^{-p^2r}}$, and hence so does $F$.
  Since $r>0$, we may repeat this process until we have analytically
  continued $F$ to $X_1(4Np^{m+1}/\q)^\an_{\geq p^{-s}}$ for some
  $s\geq 1/p(1+p)$.  Now restrict $F$ to $X_1(4Np^{m+1}/\q)^\an_{\geq
    p^{-1/p^2(1+p)}}$ and apply the process once more to get the
  desired result.
\end{proof}

The second analytic continuation step requires that we introduce some
admissible opens in $X_1(4Np^{m+1}/\q)^\an_{\Q_p}$ defined by Buzzard
in \cite{buzzard}.  The use of the letter $\scr{W}$ in this part of
the argument is intended to keep the notation parallel to that in
\cite{buzzard} and should not be confused with weight space.  If
$p\neq 2$, we let $\scr{W}_0\subseteq X_1(4N,p)^\an_{\Q_p}$ denote the
admissible open subspace whose points reduce to the irreducible
component on the special fiber of $X_1(4N,p)$ in characteristic $p$
that contains the cusp associated to the datum $(\Tate(q),P,\mu_p)$
for some (equivalently, any) point of order $4N$ on $\Tate(q)$.
Alternatively, $\scr{W}_0$ can be characterized as the compliment of
the connected component of the ordinary locus in
$X_1(4N,p)_{\Q_p}^\an$ containing the cusp associated to
$(\Tate(q),P,\ip{q_p})$ for some (equivalently, any) choice of $P$.
If $p=2$, we let $\scr{W}_0\subseteq X_1(N,2)^\an_{\Q_p}$ denote the
admissible open subspace whose points reduce to the irreducible
component on the special fiber of $X_1(N,2)$ in characteristic $2$
that contains the cusp associated to the datum $(\Tate(q),P,\mu_2)$
for some (equivalently, any) point of order $N$ on $\Tate(q)$.
Alternatively, $\scr{W}_0$ can be characterized as the compliment of
the connected component of the ordinary locus in $X_1(N,2)_{\Q_p}^\an$
containing the cusp associated to $(\Tate(q),P,\ip{q_2})$ for some
(equivalently, any) choice of $P$.  In particular $\scr{W}_0$ always
contains the entire supersingular locus.  The reader concerned about
problems with small $N$ in these descriptions should focus on the
``alternative'' versions and the remarks in Section
\ref{sec:modcurves} about adding level structure and taking invariants.

In \cite{buzzard}, Buzzard introduces a map
$v':\scr{W}_0\longrightarrow \Q$ defined as follows.  If
$x\in\scr{W}_0$ is a cusp, then set $v'(x)=0$.  Otherwise,
$x\in\scr{W}_0$ corresponds to a triple $(E/L,P,C)$ with $E/L$ an
elliptic curve, $P$ a point of order $4N$ ($N$ if $p=2$) on $E$, and
$C\subset E$ a cyclic subgroup of order $p$.  If $E$ has bad or
ordinary reduction, then set $v'(x)=0$.  Otherwise, if
$0<v(E)<p/(1+p)$, then $E$ has a canonical subgroup $H$ of order $p$, and we
define
$$v'(x) = \left\{\begin{array}{ll} v(E) & H=C \\ 1-v(E/C) & H\neq
C\end{array}\right.$$ Finally, if $v(E)\geq p/(1+p)$ we define $v'(x)
= p/(1+p)$.  Note that $v'$ does not depend on the point $P$.  For a
nonnegative integer $n$, we let $V_n$ denote the region in $\scr{W}_0$
defined by the inequality $v'\leq 1-1/p^{n-1}(1+p)$.  Buzzard proves
that $V_n$ is an admissible affinoid open in $\scr{W}_0$ for each $n$,
and that $\scr{W}_0$ is admissibly covered by the $V_n$.

Let 
$$f:X_1(4Np^{m+1}/\q)^\an_{\Q_p}\longrightarrow
\left\{\!\begin{array}{cc} X_1(4N,p)^\an_{\Q_p} & p\neq 2
\\ X_1(N,2)^\an_{\Q_p} & p=2\end{array}\right.$$ denote the map
characterized by $$(E,P)\longmapsto \left\{\!\begin{array}{cc}
(E/\ip{4NpP}, p^mP/\ip{4NpP},\ip{4NP/\ip{4NpP}} & p\neq 2
\\ (E/\ip{2NP}, 2^{m+1}P/\ip{2NP}, \ip{NP/\ip{2NP}}) &
p=2\end{array}\right.$$ on noncuspidal points.  Define $\scr{W}_1 =
f^{-1}(\scr{W}_0)$ and $Z_n=f^{-1}(V_n)$ for $n\geq 0$.  It follows
from the above that $\scr{W}_1$ is an admissible open in
$X_1(4Np^{m+1}/\q)^\an_K$ and that $\scr{W}_1$ is admissibly covered
by the admissible opens $Z_n$.  The latter are affinoid since $f$ is
finite.
\begin{lemm}\label{lem:inclusion}
  The inclusion $\pi_1^{-1}(Z_{n+2})\subseteq \pi_2^{-1}(Z_n)$ holds for
  all $n\geq 0$.
\end{lemm}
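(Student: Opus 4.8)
The plan is to transport the inclusion down to Buzzard's curve via the degeneracy map $f$ and recognize it there as two applications of his analysis of the $U_p$-correspondence. Since $Z_{n+2}=f^{-1}(V_{n+2})$ and $Z_n=f^{-1}(V_n)$, the assertion $\pi_1^{-1}(Z_{n+2})\subseteq\pi_2^{-1}(Z_n)$ is literally the inclusion $(f\circ\pi_1)^{-1}(V_{n+2})\subseteq(f\circ\pi_2)^{-1}(V_n)$ of admissible opens in $X_1(4Np^{m+1}/\q,p^2)^\an_{\Q_p}$. So I would first compute the two composite maps $f\circ\pi_1$ and $f\circ\pi_2$ to $X_1(4N,p)^\an_{\Q_p}$ (resp.\ $X_1(N,2)^\an_{\Q_p}$ when $p=2$) explicitly on noncuspidal points, and then feed the result into Buzzard's work.

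Unwinding the definitions, a point $y=(E,P,C)$, with $C$ cyclic of order $p^2$ meeting $\ip{P}$ trivially, has $f\circ\pi_1(y)=(E/\ip{4NpP},P_1,C_1)=:(E_1,P_1,C_1)$, while $f\circ\pi_2(y)=f(E/C,\,P\bmod C)=\bigl(E/(C+\ip{4NpP}),\ldots\bigr)$. Because $\ip{4NpP}\subseteq\ip{P}$ is disjoint from $C$, the image $\bar C$ of $C$ in $E_1$ is again cyclic of order $p^2$; and one checks from $C\cap\ip{P}=0$ that $\bar C\cap C_1=0$ and that $f\circ\pi_2(y)=(E_1/\bar C,\,P_1\bmod\bar C,\,C_1\bmod\bar C)$. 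Factoring the quotient by $\bar C$ through its unique order-$p$ subgroup $\bar C[p]$ then exhibits $f\circ\pi_2$ as the composite of $f\circ\pi_1$ with two successive steps of the $U_p$-Hecke correspondence on $X_1(4N,p)$ (resp.\ $X_1(N,2)$): one verifies, again using only triviality of the relevant intersections, that $\bar C[p]\cap C_1=0$ and that the image of $\bar C$ in $E_1/\bar C[p]$ is disjoint from the image of $C_1$, so that each of the two steps is a genuine point of the correspondence, carrying the $\Gamma_0(p)$-structure $C_1$ to its image in the respective quotient.

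It now remains to invoke the geometric heart of \cite{buzzard}: the analysis of the function $v'$ shows that the $U_p$-correspondence carries $V_{k+1}$ into $V_k$ for every $k\ge 0$, i.e.\ if $\pi_1',\pi_2'$ denote its two projections then $(\pi_1')^{-1}(V_{k+1})\subseteq(\pi_2')^{-1}(V_k)$. Applying this twice along the two steps identified above — first with $k=n+1$ and then with $k=n$ — a point $y$ with $f\circ\pi_1(y)\in V_{n+2}$ is carried after the first step into $V_{n+1}$ and after the second into $V_n$; hence $f\circ\pi_2(y)\in V_n$, which is exactly the inclusion we want (note that membership in $\scr{W}_0$ is automatic throughout, since $V_n\subseteq V_{n+1}\subseteq V_{n+2}\subseteq\scr{W}_0$). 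The case $p=2$ is identical, with $\q=4$ and the curves $X_1(N,2)$, $X_1(2^{m+1}N,4)$ in place of $X_1(4N,p)$, $X_1(4Np^{m+1}/\q,p^2)$.

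The only delicate point is the bookkeeping of the middle paragraph: one must follow the various subgroups of order $p$ and $p^2$ through $f$ and through $\pi_1,\pi_2$ and check at each stage that the disjointness conditions defining the relevant moduli problems are preserved, so that the factorization into two honest $U_p$-steps is legitimate. The substantive mathematical content — the behaviour of $v'$ under quotient by a noncanonical, respectively canonical, subgroup of order $p$, which is precisely what makes $V_{k+1}\mapsto V_k$ hold — is already contained in \cite{buzzard}.
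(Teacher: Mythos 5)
Your proof is correct and takes essentially the same approach as the paper: reduce to noncuspidal points, transport to Buzzard's curve via $f$, and apply the one-step estimate from \cite{buzzard} (Lemma 4.2(2) there) twice, once for each of the two $p$-isogenies into which the quotient by $C$ factors. The paper's own proof is a one-sentence citation of that lemma; you have written out the moduli-theoretic bookkeeping (the identities $f\circ\pi_2(E,P,C)=(E_1/\bar C,P_1/\bar C,C_1/\bar C)$ and the disjointness checks at each stage) that the paper leaves implicit.
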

\begin{proof}
  Since the maps $\pi_1$ and $\pi_2$ are finite, the stated inclusion
  is between affinoids and can be checked on noncuspidal points.  Then
  the assertion follows immediately from two applications of Lemma 4.2
  (2) of \cite{buzzard}.  
\end{proof}

We can now state and prove the second analytic continuation result.
\begin{prop}\label{prop:an2}
  Let $r>0$ and let $F\in
  \widetilde{M}'_{k/2}(4Np^{m+1}/\q,K,p^{-r})$.  Suppose that there
  exists a polynomial $P(T)\in K[T]$ with $P(0)\neq 0$ such that
  $P(U_{p^2})F$ extends to $\scr{W}_1$.  Then $F$ extend to this
  region as well.
\end{prop}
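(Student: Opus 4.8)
The plan is to follow the proof of Proposition \ref{prop:an1} essentially line for line, with $\scr{W}_1$ taking the place of $X_1(4Np^{m+1}/\q)^\an_{\geq p^{-1/(1+p)}}$ and Lemma \ref{lem:inclusion} supplying the behaviour of $U_{p^2}$ on the relevant regions. Since the $Z_n$ form a \emph{nested} admissible cover of $\scr{W}_1$, taking the union over $n$ of the inclusions of Lemma \ref{lem:inclusion} gives $\pi_1^{-1}(\scr{W}_1)\subseteq\pi_2^{-1}(\scr{W}_1)$. Intersecting this with the inclusion $\pi_1^{-1}(X_1(4Np^{m+1}/\q)^\an_{\geq p^{-p^2s}})\subseteq\pi_2^{-1}(X_1(4Np^{m+1}/\q)^\an_{\geq p^{-s}})$ valid for $s<1/p(1+p)$ (recalled in Section \ref{sec:hecke}), one sees that $U_{p^2}$ carries a section of $\OO(k\Sigma_{4Np^{m+1}/\q})$ over $\scr{W}_1\cap X_1(4Np^{m+1}/\q)^\an_{\geq p^{-s}}$ to a section over $\scr{W}_1\cap X_1(4Np^{m+1}/\q)^\an_{\geq p^{-p^2s}}$, the requisite pullback--multiply-by-$\Theta_{p^2}^k$--trace being available just as in the construction of $U_{p^2}$ in Section \ref{sec:hecke}. (Here we use that $X_1(4Np^{m+1}/\q)^\an_{\geq p^{-r}}\subseteq\scr{W}_1$, which is implicit in the statement of the proposition and holds because on this connected region the $p$-power part of the chosen level structure cuts out the canonical subgroup, as it does at the defining cusp.)

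Now write $P(T)=a+TR(T)$ with $a=P(0)\neq 0$ and $R(T)\in K[T]$, so that on $X_1(4Np^{m+1}/\q)^\an_{\geq p^{-r}}$ we have $F=a^{-1}\bigl(P(U_{p^2})F-U_{p^2}(R(U_{p^2})F)\bigr)$. After shrinking $r$ (harmless since $r>0$), the term $U_{p^2}(R(U_{p^2})F)$ extends to $\scr{W}_1\cap X_1(4Np^{m+1}/\q)^\an_{\geq p^{-p^2r}}$ by the previous paragraph, while $P(U_{p^2})F$ extends to $\scr{W}_1$ by hypothesis, so $F$ extends to $\scr{W}_1\cap X_1(4Np^{m+1}/\q)^\an_{\geq p^{-p^2r}}$. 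Iterating, and interposing (if the exact bound requires it) a single ``restrict, then apply $U_{p^2}$ once more'' step of the kind used at the end of the proof of Proposition \ref{prop:an1}, one extends $F$ to $\scr{W}_1\cap X_1(4Np^{m+1}/\q)^\an_{\geq p^{-s}}$ for some $s<p/(1+p)$. The map $f$ realizes the elliptic curve underlying a point of $Z_0$ as a curve $E$ whose quotient $E/H$ by its canonical subgroup $H$ of $p$-power order satisfies $v(E/H)\leq 1/(1+p)$; as $v(E/H)$ is a fixed $p$-power multiple of $v(E)$, every point of $Z_0$ satisfies $v(E)\leq c_m$ for an explicit constant $c_m<p/(1+p)$. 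Choosing $s\geq c_m$ above (possible precisely because $c_m<p/(1+p)$), we get $Z_0\subseteq\scr{W}_1\cap X_1(4Np^{m+1}/\q)^\an_{\geq p^{-s}}$, so that $F$ is now defined over $Z_0$.

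Finally, Lemma \ref{lem:inclusion} shows directly that $U_{p^2}$ sends sections over $Z_n$ to sections over $Z_{n+2}$, so the displayed identity --- with $P(U_{p^2})F$ still furnished over $\scr{W}_1\supseteq Z_{n+2}$ by the hypothesis --- extends $F$ successively to $Z_2,Z_4,Z_6,\dots$. Since $\{Z_{2\ell}\}_{\ell\geq 0}$ is cofinal in the admissible cover $\{Z_n\}_n$ of $\scr{W}_1$, and each $Z_{2\ell}$ is connected and contains the cusp attached to $(\Tate(q),\zeta)$ for any primitive $(4Np^{m+1}/\q)$-th root of unity $\zeta$ --- so that the successive extensions agree by the $q$-expansion principle --- these glue to a section over $\scr{W}_1$ restricting to $F$, as desired. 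I expect the main obstacle to be the bookkeeping of radii in the middle paragraph: isolating the extra step needed near $v(E)=p/(1+p)$ when $m$ is small, and verifying that the regions $\scr{W}_1\cap X_1(4Np^{m+1}/\q)^\an_{\geq p^{-s}}$ together with the $Z_n$ genuinely sweep out $\scr{W}_1$. Everything structural is formal once Lemma \ref{lem:inclusion} is in hand.
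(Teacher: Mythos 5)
Your overall strategy matches the paper's: establish $F$ on $Z_0$, then extend inductively from $Z_n$ to $Z_{n+2}$ via Lemma~\ref{lem:inclusion} and the identity $F=a^{-1}(P(U_{p^2})F-P_0(U_{p^2})F)$, and conclude from the admissible cover. The difference is in how you reach $Z_0$, and here the paper's route is both cleaner and avoids a shaky step in yours. The paper simply observes the identity
$$X_1(4Np^{m+1}/\q)^\an_{\geq p^{-1/(1+p)}} = Z_0 \subseteq \scr{W}_1,$$
after which Proposition~\ref{prop:an1} \emph{already} hands you $F$ on $Z_0$ with no further work. (This identity also instantly gives the containment $X_1(4Np^{m+1}/\q)^\an_{\geq p^{-r}}\subseteq\scr{W}_1$ you invoke parenthetically, since $r\leq r_m<1/(1+p)$.) Your middle paragraph instead re-runs the $U_{p^2}$-iteration of Proposition~\ref{prop:an1} in the ambient $\scr{W}_1\cap X_1(\cdots)_{\geq p^{-s}}$ before trying to absorb $Z_0$, which is redundant once the above identity is in hand.

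Moreover, your justification that $Z_0\subseteq X_1(\cdots)_{\geq p^{-c_m}}$ with $c_m<p/(1+p)$ leans on the assertion that ``$v(E/H)$ is a fixed $p$-power multiple of $v(E)$.'' That is not correct as a blanket statement: the scaling of $v$ under quotient by a cyclic $p$-power subgroup depends on whether the subgroup is canonical, on whether $v(E)$ lies in the relevant range, and at the threshold the relation degenerates (cf.\ Theorem 3.3 of \cite{buzzard}, where quotient by a \emph{non}-canonical subgroup of order $p$ gives $v(E/C)=v(E)/p$). For a point of $Z_0=f^{-1}(V_0)$ one must first argue that the quotienting subgroup $\langle 4NpP\rangle$ is the canonical one, and that is not built into the definition of $V_0$ --- it is essentially the content of the identity $Z_0=X_1(\cdots)_{\geq p^{-1/(1+p)}}$ that the paper cites and you are trying to circumvent. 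Finally, the appeal to the $q$-expansion principle in your last paragraph is unnecessary: the $Z_n$ are nested, and each extension is uniquely determined by the displayed formula, so the successive extensions automatically restrict correctly.
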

\begin{proof}
  Note that $$X_1(4Np^{m+1}/\q)^\an_{\geq p^{-1/(1+p)}} = Z_0\subseteq
  \scr{W}_1$$ so that by Proposition \ref{prop:an1}, $F$ extends to
  $Z_0$.  Now we proceed inductively to extend $F$ to each $Z_n$.  Let
  $P(T) = P_0(T)+a$ with $P_0(0)=0$ and $a\neq 0$.  Then $$F =
  \frac{1}{a}( P(U_{p^2})F - P_0(U_{p^2})F).$$ Suppose $F$ extends to
  $Z_n$ for some $n\geq 0$.  By hypothesis $P(U_{p^2})F$ extends to
  all of $\scr{W}_1$, and by the construction of $U_{p^2}$ and Lemma
  \ref{lem:inclusion}, $P_0(U_{p^2})F$ extends to $Z_{n+2}$, and hence
  so does $F$.  Thus by induction $F$ extends to $Z_n$ for all $n$,
  and since $\scr{W}_1$ is admissibly covered by the $Z_n$, $F$
  extends to $\scr{W}_1$.
\end{proof}

If $p\neq 2$ and $m=1$ (that is, if there is only one $p$ in the level),
then this is the end of the second analytic continuation step.  In all
other cases, Buzzard's techniques in \cite{buzzard} allow us to
analytically continue to more connected components of the ordinary
locus.  Define $$\mathbf{m} = \ord_p(\q p^{m-1}) =
\left\{\!\begin{array}{cc} m & p\neq 2 \\ m+1 &
p=2\end{array}\right.$$ Following Buzzard, for $0\leq r\leq
\mathbf{m}$ let $\scr{U}_r$ denote the admissible open in
$X_1(4Np^{m+1}/\q)^\an_K$ whose non-cuspidal points parameterize pairs
$(E,P)$ that are either supersingular or satisfy
$$H_{p^{\m-r}}(E)=\left\{\!\begin{array}{cc}
H_{p^{m-r}}(E) = \ip{4Np^rP} & p\neq 2 \\
H_{2^{m+1-r}}(E) = \ip{N2^rP} & p=2
\end{array}\right.$$
 We have $$\scr{W}_1=\scr{U}_0\subseteq\scr{U}_1\subseteq\cdots
 \subseteq \scr{U}_{\m}=X_1(4Np^{m+1}/\q)^\an_K$$ The last goal of the
 second step is to analytically continue eigenforms to 
$\scr{U}_{\m-1}$.

\begin{lemm}\label{lem:inclusion2}
For $0\leq r\leq \m-2$ we have  $\pi_1^{-1}(\scr{U}_{r+1})\subseteq
\pi_2^{-1}(\scr{U}_r)$.
\end{lemm}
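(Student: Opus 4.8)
The plan is to reduce the assertion to a purely modular statement about canonical subgroups and then quote the relevant facts from \cite{buzzard}, exactly as was done for Lemma \ref{lem:inclusion}. Since the maps $\pi_1,\pi_2:X_1(4Np^{m+1}/\q,p^2)^\an_K\to X_1(4Np^{m+1}/\q)^\an_K$ are finite, the regions $\pi_1^{-1}(\scr{U}_{r+1})$ and $\pi_2^{-1}(\scr{U}_r)$ are admissible opens whose comparison may be checked on non-cuspidal points (the cusps all lie in $\scr{U}_0$, hence in every $\scr{U}_s$, so they present no difficulty). Thus I would fix a point of $X_1(4Np^{m+1}/\q,p^2)^\an_K$, given by a triple $(E,P,D)$ with $D$ a cyclic subgroup of order $p^2$, lying in $\pi_1^{-1}(\scr{U}_{r+1})$, and show that $\pi_2(E,P,D)=(E/D,P/D)$ lies in $\scr{U}_r$.

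First I would dispose of the supersingular case: if $E$ is supersingular then so is $E/D$, and supersingular points lie in every $\scr{U}_s$ by definition, so there is nothing to prove. So assume $(E,P)\in\scr{U}_{r+1}$ is not supersingular, which means $H_{p^{\m-r-1}}(E)$ exists and equals $\ip{4Np^{r+1}P}$ (writing the $p\neq2$ case; the $p=2$ case is identical with $4N$ replaced by $N$ and the powers of $p$ by powers of $2$). The key point is to understand the behavior of higher canonical subgroups under the degree-$p^2$ isogeny $E\to E/D$. Here one must split into cases according to the position of $D$ relative to the canonical filtration of $E$: the relevant standard facts (Theorem 3.3 of \cite{buzzard}, and the inductive description of $H_{p^m}$ recalled in Section \ref{sec:modcurves}) tell us how $v(E/D)$ and the canonical subgroups $H_{p^s}(E/D)$ are computed in each case, and in particular whether $H_{p^{\m-r}}(E/D)$ exists and how it relates to the image of $\ip{4Np^rP}$ in $E/D$. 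The content of the lemma is that in every case the defining condition of $\scr{U}_r$ for the pair $(E/D,P/D)$ — namely $H_{p^{\m-r}}(E/D)=\ip{4Np^r\cdot(P/D)}$ — is forced by the hypothesis on $(E,P)$. I expect this to come down to two applications of the Buzzard facts (one for each of the two canonical-subgroup steps comprising the degree-$p^2$ quotient), exactly paralleling the proof of Lemma \ref{lem:inclusion}; the appropriate reference is Lemma 4.2 of \cite{buzzard}, applied twice.

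The main obstacle is the bookkeeping in the non-canonical case: if $D$ (or one of its order-$p$ constituents) fails to be contained in the canonical subgroup of the relevant curve, the measure of singularity can \emph{increase} under quotient, and one must check that the hypothesis $(E,P)\in\scr{U}_{r+1}$ is strong enough to prevent this from pushing $(E/D,P/D)$ out of $\scr{U}_r$ — in other words, that the level structure $P$ pins down $D$ enough to stay within the canonical tower. This is precisely the role of the index shift from $r+1$ to $r$: the extra canonical subgroup available for $(E,P)$ guarantees that the order-$p$ subgroup $\ip{4Np^{r+1}P}$ we are implicitly killing is canonical, so the quotient is well-behaved and the argument goes through cleanly. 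Once this is set up correctly the verification is routine, so I would keep the written proof short and defer the case analysis to the cited results.
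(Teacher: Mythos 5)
Your high-level reduction is exactly right: check on noncuspidal points, handle the supersingular locus trivially, and reduce to a statement about canonical subgroups. But the middle of your proposal goes astray, and you miss the single observation that makes the lemma easy and eliminates the case analysis you anticipate.

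The key point the paper uses, which your proposal never articulates, is the following. If $(E,P)\in\scr{U}_{r+1}$ is ordinary and noncuspidal, then $H_{p^{\m-r-1}}(E)=\ip{4Np^{r+1}P}\subseteq\ip{P}$. The subgroup $C$ of order $p^2$ in the $\Gamma_1(4Np^{m+1}/\q,p^2)$-structure meets $\ip{P}$ trivially by definition, and since $r\leq\m-2$ the group $H_{p^{\m-r-1}}(E)$ is nontrivial; hence $C\cap H_{p^{\m-r-1}}(E)=0$. In particular $C$ automatically misses the canonical subgroup, so there is no ``position of $D$ relative to the canonical filtration'' to worry about: the level structure pins down the situation completely, and the paper then invokes Proposition 3.5 of \cite{buzzard} in a single step (not Lemma 4.2 applied twice, as in Lemma \ref{lem:inclusion}) to conclude that $H_{p^{\m-r}}(E/C)$ is generated by the image of $4Np^rP$.

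Several of your surrounding remarks are also incorrect. You say that $\ip{4Np^{r+1}P}$ is ``the order-$p$ subgroup we are implicitly killing'' — but that subgroup has order $p^{\m-r-1}$, not $p$, and it is not being killed at all (it injects into $E/C$, since it misses $C$); the subgroup being quotiented out is $C$, which is anti-canonical. You also assert that if $D$ fails to be contained in the canonical subgroup ``the measure of singularity can increase under quotient'' — this is backwards: quotienting by a non-canonical order-$p$ subgroup makes $v$ decrease (Theorem 3.3 of \cite{buzzard}), while quotienting by the canonical subgroup makes it increase. These confusions would likely have led you into an unnecessary and possibly circular case analysis; the correct proof has no cases.
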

\begin{proof}
  As usual, it suffices to check this on non-cuspidal points.
  Moreover, it suffices to check it on ordinary points, since the
  entire supersingular locus is contained in each $\scr{U}_r$.  For
  brevity we will assume $p\neq 2$.  The case $p=2$ is proven in
  exactly the same manner.  Let $(E,P,C)\in \pi_1^{-1}(\scr{U}_{r+1})$
  be such a point.  Then $H_{p^{m-r-1}}(E) = \ip{4Np^{r+1}P}$ and
  since $r+1<m$, we conclude that $H_{p^{m-r-1}}(E)\cap C=0$.  Now
  Proposition 3.5 of \cite{buzzard} implies that $H_{p^r}(E/C)$ is
  indeed generated by the image of $4Np^rP$ in $E/C$, so $(E,P,C)\in
  \pi_2^{-1}(\scr{U}_r)$.
\end{proof}

\begin{prop}\label{prop:an2.1}
  Let $r>0$ and let $F\in
  \widetilde{M}'_{k/2}(4Np^{m+1}/\q,K,p^{-r})$.  Suppose that there
  exists a polynomial $P(T)\in K[T]$ with $P(0)\neq 0$ such that
  $P(U_{p^2})F$ extends to $\scr{U}_{\m-1}$.  Then $F$ extend to this
  region as well.
\end{prop}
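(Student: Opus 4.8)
The plan is to imitate the proof of Proposition \ref{prop:an2}, this time climbing the chain $\scr{W}_1=\scr{U}_0\subseteq\scr{U}_1\subseteq\cdots\subseteq\scr{U}_{\m-1}$ one region at a time. We may assume $\m\geq 2$, for if $\m=1$ then $\scr{U}_{\m-1}=\scr{W}_1$ and the statement is Proposition \ref{prop:an2} itself. In all cases the hypothesis of Proposition \ref{prop:an2} is satisfied here, since extension of $P(U_{p^2})F$ to $\scr{U}_{\m-1}$ entails, by restriction, extension to $\scr{W}_1=\scr{U}_0$; hence $F$ itself already extends to $\scr{U}_0$.

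Next I would record how $U_{p^2}$ moves sections between the $\scr{U}_r$. The construction of $U_{p^2}$ recalled above together with Lemma \ref{lem:inclusion2} shows that $U_{p^2}$ carries $H^0(\scr{U}_r,\OO(k\Sigma_{4Np^{m+1}/\q}))$ into $H^0(\scr{U}_{r+1},\OO(k\Sigma_{4Np^{m+1}/\q}))$ for $0\leq r\leq\m-2$; the extra input, which is the heart of the matter, is that $U_{p^2}$ also maps $H^0(\scr{U}_{\m-1},\OO(k\Sigma_{4Np^{m+1}/\q}))$ into itself, i.e.\ that $\pi_1^{-1}(\scr{U}_{\m-1})\subseteq\pi_2^{-1}(\scr{U}_{\m-1})$. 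Granting this and using that $F$ extends to $\scr{U}_0$, iterated application of $U_{p^2}$ shows that $U_{p^2}^jF$ extends to $\scr{U}_j$ for $1\leq j\leq\m-1$ and to $\scr{U}_{\m-1}$ for $j\geq\m-1$; in particular $U_{p^2}^jF$ extends to $\scr{U}_{\m-1}$ for every $j\geq1$. Writing $P(T)=P_0(T)+a$ with $P_0(0)=0$ and $a\neq0$, we then have $F=\frac{1}{a}\bigl(P(U_{p^2})F-P_0(U_{p^2})F\bigr)$; the term $P(U_{p^2})F$ extends to $\scr{U}_{\m-1}$ by hypothesis, and every monomial $U_{p^2}^jF$ with $j\geq1$ occurring in $P_0(U_{p^2})F$ extends to $\scr{U}_{\m-1}$ by the previous sentence, so $F$ extends to $\scr{U}_{\m-1}$ as well.

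The main obstacle is thus the inclusion $\pi_1^{-1}(\scr{U}_{\m-1})\subseteq\pi_2^{-1}(\scr{U}_{\m-1})$. This lies just beyond the range $0\leq r\leq\m-2$ covered by Lemma \ref{lem:inclusion2}, but --- crucially --- with $\scr{U}_{\m-1}$, not $\scr{U}_{\m}=X_1(4Np^{m+1}/\q)^\an_K$, on the source, so it is a genuinely different (and true) assertion. I would verify it on non-cuspidal points: given $(E,P)\in\scr{U}_{\m-1}$ and a cyclic subgroup $C\subseteq E$ of order $p^2$ meeting $\ip{P}$ trivially, one must show $(E/C,P/C)\in\scr{U}_{\m-1}$. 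If $(E,P)$ lies in the supersingular locus this is immediate, because $E/C$ is again supersingular and $\scr{U}_{\m-1}$ contains the entire supersingular locus. Otherwise $H_p(E)=\ip{4Np^{\m-1}P}$ is contained in $\ip{P}$, whence $C\cap H_p(E)=0$; since $H_p(E)$ is the unique subgroup of order $p$ in the cyclic group $H_{p^2}(E)$, an elementary argument with cyclic $p$-groups upgrades this to $C\cap H_{p^2}(E)=0$, and then the standard facts about quotients by non-canonical subgroups (Theorem 3.3 and Proposition 3.5 of \cite{buzzard}) identify $H_p(E/C)$ with the image of $H_p(E)$, that is with $\ip{4Np^{\m-1}(P/C)}$, so that $(E/C,P/C)\in\scr{U}_{\m-1}$. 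The case $p=2$ goes through in exactly the same way, with $N2^{\m-1}P$ in place of $4Np^{\m-1}P$.
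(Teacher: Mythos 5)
Your argument is correct, but it takes a slightly different route than the paper's, and the auxiliary lemma you develop is more work than is needed. The paper runs a clean one-step induction on $r$: assuming $F$ extends to $\scr{U}_r$ for some $0\leq r\leq\m-2$, each $U_{p^2}^j F$ with $j\geq 1$ extends to $\scr{U}_{r+1}$ (iterate $U_{p^2}$ as an endomorphism of sections over $\scr{U}_r$ and apply it once more, landing in $\scr{U}_{r+1}$ by Lemma \ref{lem:inclusion2}), so $P_0(U_{p^2})F$ extends to $\scr{U}_{r+1}$; since $P(U_{p^2})F$ extends to all of $\scr{U}_{\m-1}\supseteq\scr{U}_{r+1}$ by hypothesis, the identity $F=\tfrac{1}{a}\bigl(P(U_{p^2})F-P_0(U_{p^2})F\bigr)$ extends $F$ to $\scr{U}_{r+1}$. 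This formula is thus invoked at every step, gaining one region per step, and never requires $U_{p^2}$ to stabilize $\scr{U}_{\m-1}$.

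Your version instead pushes all the $U_{p^2}^j F$ up to $\scr{U}_{\m-1}$ first and applies the formula only once, at the cost of needing the inclusion $\pi_1^{-1}(\scr{U}_{\m-1})\subseteq\pi_2^{-1}(\scr{U}_{\m-1})$ for the monomials with $j\geq\m$. Your direct verification of this inclusion (reducing to non-cuspidal points, peeling off the supersingular locus, and using $C\cap H_{p^2}(E)=0$ together with the standard behaviour of canonical subgroups under quotient by an \'etale cyclic $p$-subgroup) is sound, but it is also unnecessary: the inclusion already follows from Lemma \ref{lem:inclusion2} applied with $r=\m-2$, since $\pi_1^{-1}(\scr{U}_{\m-1})\subseteq\pi_2^{-1}(\scr{U}_{\m-2})\subseteq\pi_2^{-1}(\scr{U}_{\m-1})$ using $\scr{U}_{\m-2}\subseteq\scr{U}_{\m-1}$. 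Indeed the paper quietly uses exactly this observation just before Proposition \ref{prop:an3}. So your proof is correct, your extra lemma is true, but you could have saved a paragraph either by deriving the inclusion from Lemma \ref{lem:inclusion2} as above or, better, by mimicking the paper's per-step induction and not needing it at all.
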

\begin{proof}
 Since $\scr{U}_0 = \scr{W}_1$, Proposition \ref{prop:an2} ensures
 that $F$ analytically continues to $\scr{U}_0$.  Now we proceed
 inductively to extend $F$ to each $\scr{U}_r$, $0\leq r\leq \m-1$.
 Let $P(T) = P_0(T)+a$ with $P_0(0)=0$ and $a\neq 0$.  Then $$F =
 \frac{1}{a}( P(U_{p^2})F - P_0(U_{p^2})F).$$ Suppose $F$ extends to
 $\scr{U}_r$ for some $0\leq r\leq \m-2$.  By hypothesis $P(U_{p^2})F$
 extends to all of $\scr{U}_{\m-1}$, and by the construction of
 $U_{p^2}$ and Lemma \ref{lem:inclusion2}, $P_0(U_{p^2})F$ extends to
 $\scr{U}_{r+1}$, and hence so does $F$.  Proceeding inductively, we
 see that $F$ can be extended all the way to $\scr{U}_{\m-1}$.
\end{proof}

The third and most difficult analytic continuation step is to continue
to the rest of the curve $X_1(4Np^{m+1}/\q)^\an_K$.  If $p\neq 2$, we
let $\scr{V}_0$ denote the admissible open in $X_1(4N,p)^\an_K$ whose
points reduce to the irreducible component on the special fiber in
characteristic $p$ that contains the cusp associated to
$(\Tate(q),P,\ip{q_p})$ for some (equivalently, any) choice of $P$.
On the other hand, if $p=2$, we let $\scr{V}_0$ denote the admissible
open in $X_1(N,2)^\an_K$ whose points reduce to the irreducible
component on the special fiber in characteristic $2$ that contains the
cusp associated to $(\Tate(q),P,\ip{q_2})$ for some (equivalently,
any) choice of $P$.  Let $\scr{V}$ denote the preimage of $\scr{V}_0$
under the finite map
\begin{eqnarray*}
  g:X_1(4Np^{m+1}/\q)^\an_{\Q_p} & \longrightarrow &
  \left\{\!\begin{array}{cc} X_1(4N,p)^\an_{\Q_p} & p\neq 2
  \\ X_1(N,2)^\an_{\Q_p} & p=2\end{array}\right.
\\ (E,P) & \longmapsto & \left\{\!\begin{array}{cc}
(E,p^mP,\ip{4Np^{m-1}P}) & p\neq 2 \\
(E,2^{m+1}P,\ip{2^mNP}) & p=2\end{array}\right.
\end{eqnarray*}
Note that the preimage under $g$ of the locus that reduces to the
other component of $X_1(4N,p)_{\F_p}$ (or $X_1(N,2)_{\F_2}$ if
$p=2$) is $\scr{U}_{\m-1}$, so in particular $\{\scr{U}_{\m-1},
\scr{V}\}$ is an admissible cover of $X_1(4Np^{m+1}/\q)^\an_{\Q_p}$
and $\scr{U}_{\m-1}\cap \scr{V}$ is the supersingular locus.

For any subinterval $I\subseteq (p^{-p/(1+p)},1]$ let $\scr{V}I$
  (respectively $\scr{U}_{\m-1}I$) denote the admissible open in
  $\scr{V}$ (respectively $\scr{U}_{\m-1}$) defined by the condition
  $p^{-v(E)}\in I$.  Note that the complement of $\scr{U}_{\m-1}$ in
  $X_1(4Np^{m+1}/\q)^\an_K$ is $\scr{V}[1,1]$.  Given a
  $U_{p^2}$-eigenform of suitably low slope we will define a function
  on $\scr{V}[1,1]$ and use the gluing techniques of \cite{kassaei} to
  glue it to the analytic continuation of our eigenform to
  $\scr{U}_{\m-1}$ guaranteed by Proposition \ref{prop:an2}. These
  techniques rely heavily on the norms introduced in Section
  \ref{norms}.  The use of Lemma \ref{ignorecusps} to reduce these
  norms to the supremum norm on the complement of the residue disks
  around the cusps will be implicit in many of the estimates that
  follow.

Over $\scr{V}(p^{-1/p(1+p)},1]$ we have a section $h$ to $\pi_1$ given
on noncuspidal points by 
\begin{eqnarray*}
h:\scr{V}(p^{-1/p(1+p)},1] &\longrightarrow &
  X_1(4Np^{m+1}/\q,p^2)^\an_K \\ (E,P) & \longmapsto & (E,P,H_{p^2})
\end{eqnarray*}
By standard results on quotienting by the canonical subgroup
(\cite{buzzard}, Theorem 3.3), the composition $\pi_2\circ h$
restricts to a map 
\begin{equation}\label{Qrestricts}
Q:\scr{V}(p^{-r},1]\longrightarrow
\scr{V}(p^{-p^2r},1]
\end{equation}
for any $0\leq r\leq 1/p(1+p)$.  Note that since $Q$ preserves the
property of having ordinary or supersingular reduction, $Q$ restricts
to a map $\scr{V}(p^{-r},1)\to\scr{V}(p^{-p^2r},1)$.  Define a
meromorphic function $\vartheta$ on $\scr{V}(p^{-1/p(1+p)},1]$ by
$\vartheta = h^*\Theta_{p^2}$, and note that
\begin{equation}\label{varthetadiv}
\div(\vartheta) = h^*(\pi_2^*\Sigma_{4Np^{m+1}/\q}
-\pi_1^*\Sigma_{4Np^{m+1}/\q}) = Q^*\Sigma_{4Np^{m+1}/\q}
-\Sigma_{4Np^{m+1}/\q}.
\end{equation}

Let $F\in H^0(\scr{U}_{\m-1},\OO(k\Sigma_{4Np^{m+1}/\q}))$ and
suppose that
$$U_{p^2}F = \alpha F+H$$ on $\scr{U}_{\m-1}$ for some \emph{classical}
form $H$ and some $\alpha\neq 0$.  Note that this condition makes
sense because $\pi_1^{-1}(\scr{U}_{\m-1}) \subseteq
\pi_2^{-1}(\scr{U}_{\m-1})$ by Lemma \ref{lem:inclusion2}. For a pair
$(E,P)\in\scr{U}_{\m-1}$ corresponding to a noncuspidal point, we have
\begin{equation}\label{F1extends}
F(E,P) = \frac{1}{\alpha p^2}\sum_C F(E/C,P/C)\Theta_{p^2}^k(E,P,C) -
\frac{1}{\alpha}H(E,P)
\end{equation}
where the sum is over the cyclic subgroups of order $p^2$ having
trivial intersection with the group generated by $P$.  Suppose that
$(E,P)$ corresponds to a point in $\scr{V}(p^{-1/p(1+p)},1)$.  Then
the subgroup generated by $P$ has trivial intersection with the
canonical subgroup $H_{p^2}$, and thus the canonical subgroup is among
the subgroups occurring in the sum above.  One can check using Theorem
3.3 of \cite{buzzard} that $(E/H_{p^2},P/H_{p^2})$ corresponds to a
point of $\scr{V}(p^{-p/(1+p)},1)$, while if $C\neq H_{p^2}$ is a
cyclic subgroup of order $p^2$ with trivial intersection with
$\ip{P}$, then $(E/C,P/C)$ corresponds to a point of
$\scr{U}_{\m-1}(p^{-1/p(1+p)},1]$.  Define $F_1$ on
$\scr{V}(p^{-1/p(1+p)},1)$ by
$$F_1 = F - \frac{1}{\alpha p^2
  }\vartheta^kQ^*(F|_{\scr{V}(p^{-p/(1+p)},1)}).$$

\begin{lemm}
  The function $F_1$ on $\scr{V}(p^{-1/p(1+p)},1)$ extends to an
  element of $H^0(\scr{V}(p^{-1/p(1+p)},1],\OO(k\Sigma_{4Np^{m+1}/\q}))$.
\end{lemm}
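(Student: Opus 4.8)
The plan is to rewrite $F_1$ on $\scr{V}(p^{-1/p(1+p)},1)$ using the eigenvalue equation $U_{p^2}F=\alpha F+H$, isolating the contribution of the canonical subgroup, and then to recognize what remains as the trace of a section along a finite flat map that is defined over the \emph{larger} region $\scr{V}(p^{-1/p(1+p)},1]$. Write $\Sigma=\Sigma_{4Np^{m+1}/\q}$. Since $\pi_1^{-1}(\scr{U}_{\m-1})\subseteq\pi_2^{-1}(\scr{U}_{\m-1})$ by Lemma \ref{lem:inclusion2}, the hypothesis unwinds at a noncuspidal $(E,P)\in\scr{U}_{\m-1}$ to
$$\alpha F(E,P)+H(E,P)=\frac1{p^2}\sum_C F(E/C,P/C)\,\Theta_{p^2}^k(E,P,C),$$
the sum running over cyclic $C\subset E$ of order $p^2$ with $C\cap\ip{P}=0$. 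For $(E,P)\in\scr{V}(p^{-1/p(1+p)},1)$ the canonical subgroup $H_{p^2}$ occurs among the $C$ (its triviality of intersection with $\ip{P}$ being recorded just before the lemma), and its term is $\vartheta^k(E,P)\,(Q^*F)(E,P)$ by the definitions $\vartheta=h^*\Theta_{p^2}$, $Q=\pi_2\circ h$; here one uses that $Q$ lands in $\scr{V}(p^{-p/(1+p)},1)$, which is contained in the supersingular locus and hence in $\scr{U}_{\m-1}$, as noted after (\ref{Qrestricts}). Solving for $F$ and subtracting $\tfrac1{\alpha p^2}\vartheta^k Q^*F$ gives, on $\scr{V}(p^{-1/p(1+p)},1)$,
$$F_1=\frac1{\alpha p^2}\sum_{C\ne H_{p^2}}F(E/C,P/C)\,\Theta_{p^2}^k(E,P,C)-\frac1\alpha H.$$

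Next I would exhibit an extension of the right-hand side. The section $h\colon(E,P)\mapsto(E,P,H_{p^2})$ of $\pi_1$ is defined on all of $\scr{V}(p^{-1/p(1+p)},1]$, since the canonical subgroup of order $p^2$ exists over the ordinary locus as well; set $\scr{Z}^\circ:=\pi_1^{-1}(\scr{V}(p^{-1/p(1+p)},1])\setminus h(\scr{V}(p^{-1/p(1+p)},1])$. Granting (i) that $\scr{Z}^\circ$ is admissible open with $\pi_1\colon\scr{Z}^\circ\to\scr{V}(p^{-1/p(1+p)},1]$ finite and flat, and (ii) that $\pi_2(\scr{Z}^\circ)\subseteq\scr{U}_{\m-1}$, one argues by divisor bookkeeping: $\pi_2^*F$ is a section of $\pi_2^*\OO(k\Sigma)$ on $\scr{Z}^\circ$, and since $\Theta_{p^2}^k$ has divisor $k(\pi_2^*\Sigma-\pi_1^*\Sigma)$ the product $\pi_2^*F\cdot\Theta_{p^2}^k$ lies in $\pi_1^*\OO(k\Sigma)$, so $\pi_{1*}$ together with the projection formula yields
$$G:=\frac1{\alpha p^2}\,\pi_{1*}\!\left(\pi_2^*F\cdot\Theta_{p^2}^k\big|_{\scr{Z}^\circ}\right)-\frac1\alpha H\ \in\ H^0\!\left(\scr{V}(p^{-1/p(1+p)},1],\OO(k\Sigma)\right),$$
the subtracted $H$ being classical and hence globally defined. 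Over a noncuspidal point of $\scr{V}(p^{-1/p(1+p)},1)$ the fibre of $\scr{Z}^\circ$ is exactly $\{(E,P,C):C\ne H_{p^2}\}$, so $G$ there equals the displayed formula for $F_1$; as noncuspidal points are Zariski-dense and both $G$ and $F_1$ are sections of a line bundle over a reduced space, $G|_{\scr{V}(p^{-1/p(1+p)},1)}=F_1$, so $G$ is the desired extension.

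The main obstacle is establishing (i) and (ii): that the canonical section $h$ splits off as a clopen piece of $\pi_1^{-1}$ over the \emph{entire} region, including the ordinary edge $v(E)=0$ and the cusps lying over it, where the numerical criterion ($v(E/C)=p^2v(E)$, which singles out $C=H_{p^2}$ when $v(E)>0$) degenerates, and that every remaining quotient $(E/C,P/C)$ nevertheless lands in $\scr{U}_{\m-1}$. On the locus $v(E)>0$ the latter is precisely the assertion recorded in the discussion preceding the lemma; on the ordinary edge I would deduce both statements by continuity from the $v(E)>0$ case, together with the standard facts on quotients by canonical and non-canonical subgroups of $p$-power order (\cite{buzzard}, Theorem 3.3) and the description of cyclic $p^2$-isogenies over the ordinary locus. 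I expect the finite-flatness of $\pi_1|_{\scr{Z}^\circ}$ at the cusps to be the fussiest point.
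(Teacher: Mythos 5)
Your approach is the paper's: isolate the canonical-subgroup term via the section $h$ to $\pi_1$, recognize the remainder as a trace over the complement, and conclude. The difference is that you leave items (i) and (ii) as unproven hypotheses, which is exactly the content the paper actually needs to supply. The paper's key assertion is that $h$ is an isomorphism onto a \emph{connected component} of $\pi_1^{-1}(\scr{V}(p^{-1/p(1+p)},1])$. Once that is known, your $\scr{Z}^\circ$ is the union of the remaining components — automatically an admissible open, clopen in $\pi_1^{-1}(\scr{V}(p^{-1/p(1+p)},1])$ — and $\pi_1|_{\scr{Z}^\circ}$ is finite flat simply because it is the restriction of the finite flat map $\pi_1$ to a clopen subspace of the source. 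So your instinct that ``finite-flatness at the cusps'' is the fussiest point is misplaced: it is free once clopen-ness is in hand, and the genuine work is the clopen-ness itself.

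For that, ``continuity from the $v(E)>0$ case'' is not the right mechanism — you cannot extend a clopen decomposition to a boundary by continuity alone. What one actually uses is that a section of a finite (hence separated) map is a closed immersion, and then that its image is also open: over the supersingular locus this is the standard fact that the canonical subgroup section carves out a component (Buzzard, Theorem~3.3 and the surrounding discussion), and on the component of the ordinary locus contained in $\scr{V}$ the image of $h$ is precisely the connected component containing the cusp $(\Tate(q),\zeta,\mu_{p^2})$, as one sees by reducing mod $p$: the canonical subgroup specializes to $\ker(\mathrm{Fr}^2)$, and the components of $X_1(4Np^{m+1}/\q,p^2)$ over the ordinary locus are indexed by which mod-$p$ subgroup the level-$p^2$ datum specializes to. Your item (ii) (that the non-canonical quotients land in $\scr{U}_{\m-1}$) is, as you say, the pointwise statement preceding the lemma, and since $\scr{Z}^\circ$ and $\pi_2^{-1}(\scr{U}_{\m-1}(p^{-1/p(1+p)},1])\cap\scr{Z}^\circ$ are admissible opens it suffices to check it on noncuspidal points — that part of your argument is fine. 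In short: the structure is right, but you have left unproved precisely the step (clopen-ness of $h$'s image) that the paper's proof hangs on, and your proposed fix does not actually close it.
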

\begin{proof}
  Equation (\ref{F1extends}) and the comments that follow it show how
  to define the extension $\widetilde{F}_1$ of $F_1$, at least on
  noncuspidal points.  For a pair $(E,P)$ corresponding to a
  noncuspidal point of $\scr{V}(p^{-1/p(1+p)},1]$, we would like
  $$\widetilde{F}_1(E,P) = \frac{1}{\alpha p^2}\sum_C
  F(E/C,P/C)\Theta_{p^2}^k(E,P,C)-\frac{1}{\alpha}H(E,P)$$
  where the
  sum is over the cyclic subgroups of order $p^2$ of $E$ not meeting
  $\ip{P}$ and not equal to $H_{p^2}(E)$.  We can formalize this as
  follows.

  The canonical subgroup of order $p^2$ furnishes a section to the
  finite map
  $$\pi_1^{-1}(\scr{V}(p^{-1/p(1+p)},1])\stackrel{\pi_1}{
    \longrightarrow} \scr{V}(p^{-1/p(1+p)},1]$$
  and this section is an
  isomorphism onto a connected component of
  $\pi_1^{-1}(\scr{V}(p^{-1/p(1+p)},1])$.  Let $\scr{Z}$ denote the
  compliment of this connected component.  Then $\pi_1$ restricts to a
  finite and flat map $$\scr{Z}\longrightarrow
  \scr{V}(p^{-1/p(1+p)},1].$$
 Note that
 $$\scr{Z}=\pi_1^{-1}(\scr{V}(p^{-1/p(1+p)},1])\cap\scr{Z} \subseteq
 \pi_2^{-1}(\scr{U}_{\m-1}(p^{-1/p(1+p)},1])\cap\scr{Z}$$
 as can be checked
 on noncuspidal points (see the comments following Equation
 (\ref{F1extends})).  Now we may apply the general construction of
 Section \ref{sec:hecke} with this $\scr{Z}$ and define
 $$\widetilde{F}_1 = \frac{1}{\alpha
   p^2}\pi_{1*}(\pi_2^*F\cdot\Theta_{p^2}^k) - \frac{1}{\alpha}H.$$
 Then
 $$\widetilde{F}_1\in
 H^0(\scr{V}(p^{-1/p(1+p)},1],\OO(k\Sigma_{4Np^{m+1}/\q}))$$
 and Equation
 (\ref{F1extends}) shows that $\widetilde{F}_1$ extends $F_1$.
\end{proof}

For $n\geq 1$ we define an element $F_n$ of
$H^0(\scr{V}(p^{-1/p^{2n-1}(1+p)},1],\OO(k\Sigma_{4Np^{m+1}/\q}))$
  inductively, where $F_1$ is as above and for $n\geq 1$ we
  set $$F_{n+1} = F_1 + \frac{1}{\alpha p^2}\vartheta^k
  Q^*(F_n|_{\scr{V}(p^{-1/p^{2n+1}(1+p)},1]}).$$ Note that
    (\ref{Qrestricts}) and (\ref{varthetadiv}) show that the $F_n$ do
    indeed lie in the spaces indicated.  Our goal is to show that the
    sequence $\{F_n\}$, when restricted to $\scr{V}[1,1]$, converges
    to an element of $G$ of
    $H^0(\scr{V}[1,1],\OO(k\Sigma_{4Np^{m+1}/\q}))$ that glues to $F$
    in the sense that there exists a global section of
    $\OO(k\Sigma_{4Np^{m+1}/\q})$ that restricts to $F$ and $G$ on
    $\scr{U}_{\m-1}$ and $\scr{V}[1,1]$, respectively.  To do this we
    will use Kassaei's gluing lemma as developed in \cite{kassaei}.
    The following lemmas furnish some necessary norm estimates.

\begin{lemm}\label{thetaintegral}
  The function $\Theta_{p^2}$ on $Y_1(4,p^2)_{\Q_p}$ is integral.
  That is, it extends to a regular function on the fine moduli scheme
  $Y_1(4,p^2)_{\Z_p}$.
\end{lemm}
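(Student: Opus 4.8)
The plan is to read the result off from the $q$-expansions of $\Theta_{p^2}$ recorded earlier, using the $q$-expansion principle in the guise of normality. Write $Y=Y_1(4,p^2)_{\Z_p}=\Spec B$. For $p$ odd this is an integral, normal, affine scheme, flat of relative dimension one over $\Z_p$ and regular away from the supersingular points: indeed $Y_1(4,p^2)\to Y_0(p^2)$ is finite etale ($4$ being a unit), and $X_0(p^2)_{\Z_p}$ has the standard Deligne--Rapoport/Katz--Mazur description; for $p=2$ the curve is $Y_1(4,4)_{\Z_2}$ and one invokes the appendix instead. Since $B$ is normal, $B=\bigcap_{\mathfrak p}B_{\mathfrak p}$ inside $\mathrm{Frac}(B)$, the intersection over the height-one primes $\mathfrak p$, so it is enough to check $\Theta_{p^2}\in B_{\mathfrak p}$ for each. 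Now $\Theta_{p^2}$, as a rational function on $X_1(4,p^2)_{\Q_p}$, has divisor $\pi_2^*\Sigma_4-\pi_1^*\Sigma_4$, supported on the cusps, hence is regular on the open curve $Y_{\Q_p}$; this disposes of every height-one prime not containing $p$. So the task reduces to showing $\ord_\eta(\Theta_{p^2})\geq 0$ at the generic point $\eta$ of each irreducible component $\mathfrak Z$ of the special fiber $Y_{\F_p}$.

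For this I would, given $\mathfrak Z$, pick a cusp $c$ of $X_1(4,p^2)$ whose reduction $c_0$ lies on $\mathfrak Z$ and in the regular locus of $Y$ --- all cusps reduce into the ordinary locus, away from the supersingular crossing points. Using the description of the cusps by level structures on $\Tate(q)$, the cyclic subgroups of order $p^2$ meeting $\ip{\zeta_4}$ trivially are $\mu_{p^2}$, the $\ip{\zeta_{p^2}^a q_{p^2}}$, and the $\ip{\zeta_{p^2}^j q_p}$, which reduce mod $p$ to the multiplicative, etale, and mixed connected-etale types, so the cusps $(\Tate(q),\zeta_4,\mu_{p^2})$, $(\Tate(q),\zeta_4,\ip{q_{p^2}})$ and $(\Tate(q),\zeta_4,\ip{\zeta_{p^2}q_p})$ lie on the three distinct components of $Y_{\F_p}$. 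For such a cusp the Tate curve identifies $\widehat{\OO}_{Y,c_0}$ with a power series ring $\OO_c[\![Q]\!]$ over a complete discrete valuation ring $\OO_c$ finite over $\Z_p$, $Q$ the Tate parameter, and the image of $\Theta_{p^2}$ there is exactly its $q$-expansion at $c$. By the computations above (and \cite{mfhi}) these $q$-expansions are $p\,\theta(q^{p^2})/\theta(q)$, $\theta(q_{p^2})/\theta(q)$, and $\left(\!\frac{-1}{p}\!\right)\mathfrak{g}_p(\zeta_p)\,\theta(\zeta_p q)/\theta(q)$: in each case the theta quotient is a unit of $\OO_c[\![Q]\!]$ (its coefficients are $p$-integral and its constant term is $1$) and the leading scalar is a $p$-adic integer --- for the last one the Gauss sum $\mathfrak{g}_p(\zeta_p)$ has $p$-adic valuation $1/2$ and lies in $\OO_c=\Z_p[\zeta_4,\zeta_p]$ precisely because that mixed component occurs with multiplicity $p-1$ in $Y_{\F_p}$, making $\OO_c$ ramified of degree $p-1$. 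Hence $\Theta_{p^2}\in\widehat{\OO}_{Y,c_0}$, and since $\OO_{Y,c_0}\to\widehat{\OO}_{Y,c_0}$ is faithfully flat we get $\mathrm{Frac}(\OO_{Y,c_0})\cap\widehat{\OO}_{Y,c_0}=\OO_{Y,c_0}$, so $\Theta_{p^2}\in\OO_{Y,c_0}\subseteq\OO_{Y,\eta}$ and $\ord_\eta(\Theta_{p^2})\geq 0$, as needed. The same applies at any other cusp on $\mathfrak Z$, its $q$-expansion being $p$-integral by the same kind of computation from the divisor and the theta-function formulas.

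The content of the argument is bookkeeping about the special fiber rather than anything deep: one must enumerate the components of $Y_1(4,p^2)_{\F_p}$ and match each to a cusp whose $q$-expansion --- leading Gauss-sum or $p$-power factor included --- is $p$-integral, which is routine given the explicit nature of $\Theta_{p^2}$ in \cite{mfhi}. The one point to watch is that $Y_{\F_p}$ is non-reduced along the mixed component, so the relevant $\widehat{\OO}_{Y,c_0}$ is ramified over $\Z_p$, which is exactly what accommodates $\mathfrak{g}_p(\zeta_p)$; and the case $p=2$, where the curve is $Y_1(4,4)_{\Z_2}$, must be run through the appendix's analogue of the Deligne--Rapoport description. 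One could instead argue by intersection theory on the arithmetic surface $X_1(4,p^2)_{\Z_p}$ --- $\div(\Theta_{p^2})$ meets each vertical prime divisor in degree zero, and negative semidefiniteness of the vertical intersection pairing, normalized by the unit $q$-expansion $\theta(q_{p^2})/\theta(q)$ at the etale cusp, pins down the vertical multiplicities --- but verifying that the cuspidal horizontal part pushes those multiplicities in the effective direction again reduces to the same $q$-expansion input, so little is saved.
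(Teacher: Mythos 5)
Your proof is correct and follows essentially the same strategy as the paper's: reduce integrality to a check at each irreducible component of the special fiber (you via normality and height-one primes, the paper via the Cohen--Macaulay property and Katz's $q$-expansion-principle argument), and then read off the result from the explicit $q$-expansions of $\Theta_{p^2}$ at the cusps computed in \cite{mfhi}. Two cosmetic slips worth fixing: the completed local ring at a cusp should be written $\widehat{\OO}_{X,c_0}$ with $X=X_1(4,p^2)_{\Z_p}$ the compactification (since $c_0 \notin Y$), and the open moduli scheme $Y_1(4,p^2)_{\Z_p}$ is in fact regular everywhere including over the supersingular points (it is only the one-dimensional special fiber that is singular there), so "regular away from the supersingular points" overstates the caveat — though neither affects the argument since you only use normality.
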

\begin{proof}
  Each $\Gamma_1(4)\cap \Gamma^0(p^2)$ structure on the elliptic curve
  $\Tate(q)/\Q_p(\!(q)\!)$ lifts trivially to one over the Tate curve
  thought of over $\Z_p(\!(q)\!)$.  Since the Tate curve is ordinary,
  such a structure specializes to a unique component of the special
  fiber $Y_1(4,p^2)_{\F_p}$.  Since $Y_1(4,p^2)_{\Z_p}$ is
  Cohen-Macaulay, the usual argument used to prove the $q$-expansion
  principal (as in the proof of Corollary 1.6.2 of \cite{katz}) shows
  that $\Theta_{p^2}$ is integral as long as it has integral
  $q$-expansion associated to a level structure specializing to each
  component of the special fiber.  In fact, all $q$-expansion of
  $\Theta_{p^2}$ are computed explicitly in Section 5 of \cite{mfhi},
  and are all integral.
\end{proof}

\begin{lemm}\label{thetabound1}
  Let $R$ be an $\F_p$-algebra, let $E$ be an elliptic curve over $R$,
  and let $E^{(p)}$ denote the base change of $E$ via the absolute
  Frobenius morphism on $\Spec(R)$.  Let $$\Fr :E\longrightarrow
  E^{(p)}$$
  denote the relative Frobenius morphism.  Then for any point $P$
  of order $4$ on $E$ we have
  $$\Theta_{p^2}(E,P,\ker( \Fr^2)) = 0$$
\end{lemm}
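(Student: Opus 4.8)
The plan is to reduce the identity to the vanishing of a single regular function, exploiting the integrality of $\Theta_{p^2}$ proved in Lemma \ref{thetaintegral}. Over any $\F_p$-algebra $R$ and any elliptic curve $E/R$, the subgroup scheme $\ker(\Fr^2)\subseteq E$ is finite locally free of order $p^2$ and is a cyclic subgroup in the Drinfeld sense (this is one of the standard facts underlying the Katz--Mazur description of the fibres of $X_0(p^n)$ in characteristic $p$; see \cite{katzmazur}), and it meets $\ip{P}$ trivially because it has $p$-power order. Hence $(E,P,\ker(\Fr^2))$ is an $R$-point of $Y_1(4,p^2)$, functorially in $(E,P)$, so the rule $(E,P)\mapsto(E,P,\ker(\Fr^2))$ defines a section $s\colon Y_1(4)_{\F_p}\to Y_1(4,p^2)_{\F_p}$ of the first degeneracy map; it extends to a map $\bar s\colon X_1(4)_{\F_p}\to X_1(4,p^2)_{\F_p}$, since on a N\'eron polygon the kernel of $\Fr^2$ is $\mu_{p^2}$. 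By Lemma \ref{thetaintegral}, $\Theta_{p^2}$ is a rational function on $X_1(4,p^2)_{\F_p}$ that is regular on $Y_1(4,p^2)_{\F_p}$, so $s^*\Theta_{p^2}$ is a regular function on $Y_1(4)_{\F_p}$. The lemma asserts precisely that $s^*\Theta_{p^2}=0$: granting this, for any $E/R$ with $P$ of order $4$ the classifying morphism $\Spec R\to Y_1(4)_{\F_p}$ composed with $s$ classifies $(E,P,\ker(\Fr^2))$, and so $\Theta_{p^2}(E,P,\ker(\Fr^2))$ is the pullback of $s^*\Theta_{p^2}=0$.

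To see that $\bar s^*\Theta_{p^2}=0$ (which contains the statement $s^*\Theta_{p^2}=0$) I would use the $q$-expansion principle. The key point is that over $\F_p$ the Tate curve, and more generally each N\'eron polygon, is ordinary with connected $p^2$-torsion $\mu_{p^2}$, so $\bar s$ sends every cusp of $X_1(4)$ to a cusp of $X_1(4,p^2)$ whose $\Gamma^0(p^2)$-part is $\mu_{p^2}$. The $q$-expansion of $\Theta_{p^2}$ at such a cusp is computed in Section 5 of \cite{mfhi}; it is $p$ times a ratio of theta series --- this is the $\ell=p$ instance of the identity $\Theta_{\ell^2}(\Tate(q),\zeta_4,\mu_{\ell^2})=\ell\,\theta(q^{\ell^2})/\theta(q)$ already used in Section \ref{sec:hecke}. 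Since we are in characteristic $p$ and $\theta(q)$ is a unit in $\F_p[\![q]\!]$, this $q$-expansion is $0$; in particular $\bar s^*\Theta_{p^2}$ has no pole and vanishing $q$-expansion at every cusp of $X_1(4)_{\F_p}$. As every irreducible component of $X_1(4)_{\F_p}$ contains a cusp, the $q$-expansion principle --- applied componentwise, which is legitimate because $Y_1(4,p^2)_{\Z_p}$ is Cohen--Macaulay, exactly as in the proof of Lemma \ref{thetaintegral} --- forces $\bar s^*\Theta_{p^2}=0$.

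I expect the main obstacle to be the bookkeeping in residue characteristic $2$. When $p=2$ there is no primitive fourth root of unity over $\F_2$, so the map $\bar s$ and the relevant cusps of $X_1(4,4)$ must be described entirely through Drinfeld level structures on $\Tate(q)$ over $\F_2(\!(q_4)\!)$, and one must also keep track of the component structure of $X_1(4)_{\F_2}$ so that the componentwise $q$-expansion principle really does cover every component. This is the same kind of care taken elsewhere in the paper, but it is the part of the argument that is least automatic.
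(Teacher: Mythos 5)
Your proposal is correct and follows essentially the same route as the paper: construct the section $s:(E,P)\mapsto(E,P,\ker(\Fr^2))$ of the forgetful map $Y_1(4,p^2)_{\F_p}\to Y_1(4)_{\F_p}$, use Lemma \ref{thetaintegral} to see that $s^*\Theta_{p^2}$ is regular, identify $\ker(\Fr^2)$ on the Tate curve with $\mu_{p^2}$ via the characteristic-zero lift of Frobenius, read off the $q$-expansion $p\,\theta(q^{p^2})/\theta(q)\equiv 0\pmod p$, and conclude by the $q$-expansion principle. The caution you raise about $p=2$ is reasonable but not a gap: the paper's computation is done over $\Z_p(\!(q_4)\!)[\zeta_4]$ and reduced mod $p$, and the Drinfeld-level-structure bookkeeping at the cusps (including the non-\'etale $\mu_4$ in characteristic $2$) is already absorbed into the proof of Lemma \ref{thetaintegral} and the references to \cite{mfhi} and \cite{katzmazur} that both you and the paper invoke.
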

\begin{proof}
  In characteristic $p$, the forgetful map $$Y_1(4,p^2)_{\F_p}\longrightarrow
  Y_1(4)_{\F_p}$$
  has a section given on noncuspidal points by
  $$s:(E,P)\longmapsto (E,P,\ker(\Fr^2)).$$
  By Lemma
  \ref{thetaintegral}, we may pull back (the reduction of)
  $\Theta_{p^2}$ through this section to arrive at a regular function
  on the smooth curve $Y_1(4)_{\F_p}$.
  
  The $q$-expansion of $s^*\Theta_{p^2}$ at the cusp associated to
  $(\Tate(q),\zeta_{4})$ is $$s^*\Theta_{p^2}(\Tate(q),\zeta_{4}) =
  \Theta_{p^2}(\Tate(q),\zeta_4,(\ker(\Fr^2))).$$
  Recall that the
  map
  $$\Tate(q) \longrightarrow \Tate(q^p) $$ given by quotienting by
  $\mu_p$ is a lifting of $\Fr$ to characteristic zero (more
  specifically, to the ring $\Z(\!(q)\!)$). Thus the $q$-expansion we
  seek is the reduction of $$\Theta_{p^2}(\Tate(q),\zeta_4,\mu_{p^2})
  = p\frac{\sum_{n\in\Z} q^{p^2n^2}} {\sum_{n\in\Z} q^{n^2}}$$ modulo
  $p$, which is clearly zero.  We refer the reader to Section 5 of
  \cite{mfhi} for the computation of the above $q$-expansion in
  characteristic zero.  It follows from the $q$-expansion principle
  that $s^*\Theta_{p^2}=0$, which implies our claim.
\end{proof}

\begin{lemm}\label{thetabound2}
  Let $0\leq r<1/p(1+p)$.  Then the section $\vartheta$ of
  $$\OO(\Sigma_{4Np^{m+1}/\q} - Q^*\Sigma_{4Np^{m+1}/\q})$$
  satisfies
  $$\|\vartheta\|_{\scr{V}[p^{-r},1]}\leq p^{pr-1}.$$
\end{lemm}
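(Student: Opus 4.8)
The plan is to combine a characteristic-$p$ argument that pins down $\vartheta$ on the ordinary part of $\scr{V}$ with a maximum modulus estimate on the supersingular annuli that propagates the bound as $v(E)$ grows. First, by Lemma \ref{ignorecusps} the quantity $\|\vartheta\|_{\scr{V}[p^{-r},1]}$ is simply the supremum norm of the restriction of $\vartheta$ to the complement of the cuspidal residue disks, where $\vartheta$ is an honest rigid-analytic function (recall from (\ref{varthetadiv}) that $\div(\vartheta) = Q^*\Sigma_{4Np^{m+1}/\q} - \Sigma_{4Np^{m+1}/\q}$ is supported on the cusps). Since $\Theta_{p^2}$ is integral by Lemma \ref{thetaintegral} and $\vartheta = h^*\Theta_{p^2}$, we have $|\vartheta(x)|\leq 1$ at every point, which already gives the crude bound $\|\vartheta\|_{\scr{V}(p^{-1/p(1+p)},1]}\leq 1$ to start from.

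Next I would treat the boundary component $\scr{V}[1,1]$, where $v(E)=0$. Over the corresponding ordinary formal model the canonical subgroup $H_{p^2}(E)$ is a finite flat subgroup scheme whose special fibre is $\ker(\Fr^2)$, so the section $h$ reduces to the section $s$ of Lemma \ref{thetabound1}; hence the mod-$p$ reduction of $\vartheta$ equals $s^*\overline{\Theta}_{p^2}=0$. Thus $p\mid\vartheta$ on this component, i.e.\ $\|\vartheta\|_{\scr{V}[1,1]}\leq |p| = p^{-1}$, which is exactly the $r=0$ case. For general $r<1/p(1+p)$, by the maximum modulus principle $\|\vartheta\|_{\scr{V}[p^{-r},1]}$ is the maximum of this and the norms of $\vartheta$ over the finitely many closed sub-annuli $\{0< v(E)\leq r\}$ surrounding the supersingular points. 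On each such annulus $\vartheta$ is invertible (its divisor lives on the cusps), so $|\vartheta|$ depends only on the radial coordinate and $\log_p|\vartheta|$ is an affine function of $v(E)$; its value as $v(E)\to 0$ is $\leq -1$ by the previous paragraph, so it suffices to bound the slope of this affine function by $p$, after which one reads off $\log_p|\vartheta|\leq p\,v(E)-1\leq pr-1$ throughout $\scr{V}[p^{-r},1]$.

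The slope bound is the heart of the matter and the main obstacle: the crude bound $|\vartheta|\leq 1$ valid on all of $\{v(E)<1/p(1+p)\}$ only forces the slope to be $\leq p(1+p)$, which is too weak. To sharpen it I would follow Buzzard's analysis of quotienting by the canonical subgroup on supersingular annuli (Theorem 3.3 of \cite{buzzard} and the accompanying description of the maps on residue disks), which controls precisely the rate at which $h(x)=(E,P,H_{p^2}(E))$ recedes, in the radial parameter, from the component of the special fibre carrying the connected level structure $\ker(\Fr^2)$ — the very component along which $\overline{\Theta}_{p^2}$ vanishes by Lemma \ref{thetabound1}. Together with the fact that this vanishing is to order one (visible from the explicit $q$-expansion $\Theta_{p^2}(\Tate(q),\zeta_4,\mu_{p^2}) = p\sum q^{p^2n^2}/\sum q^{n^2}$ computed in \cite{mfhi}), this pins the growth rate of $\log_p|\vartheta|$ along each annulus at exactly $p$, and combining with the inner-boundary value $\leq -1$ yields the asserted estimate $\|\vartheta\|_{\scr{V}[p^{-r},1]}\leq p^{pr-1}$.
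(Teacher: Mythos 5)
Your plan diverges from the paper's proof in structure: the paper gives a uniform pointwise estimate, whereas you decompose $\scr{V}[p^{-r},1]$ into its ordinary piece and supersingular annuli and then propagate a boundary bound via the affinity of $\log_p|\vartheta|$ in $v(E)$.  The ordinary step (mod-$p$ reduction of $h$ equals $s$, so $p\mid\vartheta$ there) is sound, as is the observation that a nowhere-vanishing function on an annulus has $\log_p|\cdot|$ affine in the radial parameter.  The problem is that you have isolated the slope bound $\le p$ as ``the heart of the matter'' and then not proven it.

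The citation you offer for the slope (Theorem 3.3 of \cite{buzzard}) does not contain the needed input: that theorem controls $v(E/C)$ in terms of $v(E)$, whereas what you actually need is a quantitative congruence saying how closely $H_p(E)$ agrees with $\ker(\Fr)$ on the reduction as $v(E)$ grows.  That is precisely Theorem 3.10 of \cite{gorenkassaei}, which the paper invokes: $\mathbf H_p$ reduces to $\ker(\Fr)$ modulo $p^{1-v(E)}$, and iterating once (using $v(E/H_p)=pv(E)$, which \emph{is} Buzzard's Theorem 3.3) gives that $\mathbf H_{p^2}$ reduces to $\ker(\Fr^2)$ modulo $p^{1-pv(E)}$.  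Integrality of $\Theta_{p^2}$ then yields $|\vartheta(x)|\le p^{pv(E)-1}$ directly, at every point.  Note that this pointwise estimate is literally the statement that $\log_p|\vartheta|\le -1 + p\,v(E)$, i.e.\ the slope bound and boundary value you want; so proving your slope claim is not a lighter task than the paper's argument but the same task, and once one has it the maximum-modulus / decomposition scaffolding becomes unnecessary.  The gesture towards ``simple vanishing'' of $\overline\Theta_{p^2}$ and ``the rate at which $h(x)$ recedes'' is pointing at the right geometry, but you would still need to convert it into the quantitative congruence above, and you have neither carried that out nor cited the reference that supplies it.
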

\begin{proof}
  By Lemma \ref{ignorecusps}, we may ignore points reducing to cusps
  in computing the norm.  Let $x\in\scr{V}[p^{-r},1]$ be outside of
  this collection of points, so $x$ corresponds to a pair $(E,P)$ with
  good reduction.  Let $H_{p^i}$ denote the canonical subgroup of $E$
  of order $p^i$ (for whichever $i$ this is defined).  Let
  $\mathbf{E}$ be a smooth model of $E$ over $\OO_L$ and let
  $\mathbf{P}$ and $\mathbf{H}_{p^2}$ be the extensions of $P$ and
  $H_{p^2}$ to $\mathbf{E}$, respectively (these $\mathbf{E}$ and
  $\mathbf{H}$ should not be confused with the functions by the same
  name introduced in Section \ref{sec:somefunctions}).

  By Theorem 3.10 of \cite{gorenkassaei}, $\mathbf{H}_{p}$ reduces
  modulo $p/p^{v(E)}$ to $\ker(\Fr)$.  Applying this to
  $\mathbf{E}/\mathbf{H}_p$ we see that
  $\mathbf{H}_{p^2}/\mathbf{H}_p$ reduces modulo $p/p^{v(E/H_p)}$ to
  $\ker(\Fr)$ on the corresponding reduction of $E/H_p$.  By Theorem
  3.3 of \cite{buzzard}, we know that $v(E/H_p) = pv(E)$, so
  $p^{1-v(E/H_p)}\mid p^{1-v(E)}$ and we may combine these statements
  to conclude that $\mathbf{H}_{p^2}$ reduces modulo $p^{1-pv(E)}$ to
  $\ker(\Fr^2)$ on the reduction of $E$.

  Combining this with the integrality of $\Theta_{p^2}$ (from Lemma
  \ref{thetaintegral}), we have $$h(x)=
  \Theta_{p^2}(E,P,H_{p^2})\equiv \Theta_{p^2}(E,P,\ker(\Fr^2)) \pmod{
    p^{1-pv(E)}}.$$ This is zero by Lemma
  \ref{thetabound1}, so $$|h(x)|\leq |p^{1-pv(E)}| =
  p^{pv(E)-1}\leq 
  p^{pr-1}$$ as desired.
\end{proof}

\begin{prop}\label{prop:an3}
  Let $F\in H^0(\scr{U}_{\m-1},\OO(k\Sigma_{4Np^{m+1}/\q}))$ and suppose
  that $U_{p^2}F - \alpha F$ is classical for some $\alpha\in K$ with
  $v(\alpha)< 2\lambda -1$.  Then $F$ is classical as well.
\end{prop}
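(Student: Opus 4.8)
The plan is to carry out Kassaei's gluing argument with the sequence $\{F_n\}$ already constructed. First I would show that the restrictions $F_n|_{\scr{V}[1,1]}$ converge in norm to a section $G$ of $\OO(k\Sigma_{4Np^{m+1}/\q})$ over $\scr{V}[1,1]$, and then that $G$ and $F$ patch together to a single global section over $X_1(4Np^{m+1}/\q)^\an_K$. Having such a global extension is exactly what it means for $F$ to be classical (via the identification of Proposition \ref{comparisonofdefs}), so this proves the proposition; Theorem \ref{lowslopeisclassical} then follows by feeding an eigenform, transported through Proposition \ref{comparisonofdefs} and extended to $\scr{U}_{\m-1}$ by Propositions \ref{prop:an1}--\ref{prop:an2.1} (applied with $P(T)=T-\alpha$), into this proposition with $H=0$.

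The first task is bookkeeping with the recursion on the overlapping domains. From $F_{n+1}=F_1+\frac{1}{\alpha p^2}\vartheta^kQ^*(F_n)$ one gets $F_{n+1}-F_n=\frac{1}{\alpha p^2}\vartheta^kQ^*(F_n-F_{n-1})$ for $n\geq2$, while the definition of $F_1$ gives $F_1-F=-\frac{1}{\alpha p^2}\vartheta^kQ^*(F)$ on $\scr{V}(p^{-1/p(1+p)},1)$; these are legitimate because $\scr{V}(p^{-p/(1+p)},1)$ consists of points with $v(E)>0$, hence lies in the supersingular locus and so in $\scr{U}_{\m-1}$, where $F$ is defined. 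Iterating, $F_n-F=-\bigl(\tfrac{1}{\alpha p^2}\vartheta^kQ^*\bigr)^n(F)$ on $\scr{V}(p^{-1/p^{2n-1}(1+p)},1)$.

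The crux is the norm estimate. Over $\scr{V}[1,1]$ the map $Q$ preserves $\scr{V}[1,1]$, so $Q^*$ does not increase norms there, and Lemma \ref{thetabound2} with $r=0$ gives $\|\vartheta\|_{\scr{V}[1,1]}\leq p^{-1}$, hence $\|\vartheta^k\|_{\scr{V}[1,1]}\leq p^{-k}$. Since $2\lambda-1=k-2$, the hypothesis $v(\alpha)<2\lambda-1$ is exactly the statement that $\bigl|\tfrac{1}{\alpha p^2}\bigr|\,p^{-k}=p^{v(\alpha)+2-k}<1$; therefore $\|F_{n+1}-F_n\|_{\scr{V}[1,1]}\leq p^{v(\alpha)+2-k}\|F_n-F_{n-1}\|_{\scr{V}[1,1]}$, and since $H^0(\scr{V}[1,1],\OO(k\Sigma_{4Np^{m+1}/\q}))$ is a Banach space (Lemma \ref{banachspaces}) the sequence converges there to $G$. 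For the overlap I would check that, along the iteration on $\scr{V}(p^{-1/p^{2n-1}(1+p)},1)$, the points $Q^j(x)$ with $j\leq n-1$ have measure of singularity $p^{2j}v(E_x)$ bounded by $p/(1+p)$ uniformly in $n$, so that the multiplicativity in Lemma \ref{thetabound2} forces $\|(\vartheta^kQ^*)^n(F)\|$ on these annuli to be at most $C^kp^{-kn}\|F\|_{\scr{U}_{\m-1}}$ for an absolute constant $C$; combined with $\bigl|\tfrac{1}{\alpha p^2}\bigr|^n=p^{n(v(\alpha)+2)}$ this shows $\|F-F_n\|$ on these shrinking annuli is at most $C^k(p^{v(\alpha)+2-k})^n\|F\|_{\scr{U}_{\m-1}}\to0$.

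Finally I would invoke Kassaei's gluing lemma from \cite{kassaei} with $\scr{U}_{\m-1}$ carrying $F$, with $\scr{V}[1,1]$ as the limiting region, and with the shrinking neighbourhoods $\scr{V}(p^{-1/p^{2n-1}(1+p)},1]$ carrying the $F_n$: its input is precisely the convergence $F_n|_{\scr{V}[1,1]}\to G$ and the decay of $\|F-F_n\|$ over $\scr{U}_{\m-1}\cap\scr{V}(p^{-1/p^{2n-1}(1+p)},1]$ from the previous paragraph, and its output is a global section $\widetilde{F}\in H^0(X_1(4Np^{m+1}/\q)^\an_K,\OO(k\Sigma_{4Np^{m+1}/\q}))$ restricting to $F$ on $\scr{U}_{\m-1}$, which is the asserted classicality. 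I expect the hardest part to be this last step: matching the two flavours of convergence to the exact hypotheses of the gluing lemma and keeping the norm bookkeeping over the supersingular overlap annuli under control. The slope hypothesis is used only once, to make the contraction factor $p^{v(\alpha)+2-k}$ strictly less than $1$, which is why the sharp bound is $v(\alpha)<2\lambda-1$.
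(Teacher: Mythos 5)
Your proposal follows the paper's strategy essentially step for step: convergence of $F_n|_{\scr{V}[1,1]}$ to some $G$ via the contraction factor $p^{v(\alpha)+2-k}<1$ from Lemma \ref{thetabound2} with $r=0$, then Kassaei's gluing lemma with the $F_n$ as the approximating sections, yielding a global section of $\OO(k\Sigma_{4Np^{m+1}/\q})$. The recursion $F_{n+1}-F=\frac{1}{\alpha p^2}\vartheta^kQ^*(F_n-F)$ and its closed form are correct, and the geometric-sum bookkeeping for the $\|\vartheta\|$ factors across the nested annuli is sound.

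However, there is a genuine gap in the estimate $\|F-F_n\|\leq C^k(p^{v(\alpha)+2-k})^n\|F\|_{\scr{U}_{\m-1}}$. You treat $\|F\|_{\scr{U}_{\m-1}}$ (equivalently $\|F\|$ over the open annulus onto which $Q^n$ lands, say $\scr{V}(p^{-p/(1+p)},1)$) as an a priori finite constant, but neither region is affinoid, and this annulus abuts $\scr{V}[1,1]$, which lies \emph{outside} the domain $\scr{U}_{\m-1}$ of $F$. Thus $F$ could in principle be unbounded as $v(E)\to 0^+$, making your estimate vacuous. The paper explicitly flags this issue (``it is not even clear that the indicated norms are finite'') and resolves it by a separate inductive argument: since $\scr{V}_1=\scr{V}[p^{-1/p^2(1+p)},p^{-1/p^4(1+p)}]$ is affinoid, $\|F\|_{\scr{V}_1}$ is finite, and the recursion $F=F_1+\frac{1}{\alpha p^2}\vartheta^kQ^*F$ together with $Q\colon\scr{V}_n\to\scr{V}_{n-1}$ propagates a uniform bound on $\|F\|_{\scr{V}_n}$ outward toward $\scr{V}[1,1]$. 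This propagation uses the slope hypothesis a second time (to make the contraction factors $p^{2-k}/|\alpha|$ at most $1$); without it, the uniform bound fails and so does the whole argument. Your write-up omits this step entirely, which is precisely the delicate point you flag at the end as likely to be the hardest.
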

\begin{proof}
  Define $F_n$ as above.  We first show that the sequence
  $F_n|_{\scr{V}[1,1]}$ converges.  Note that over $\scr{V}[1,1]$ we
  have
  \begin{eqnarray*}
    F_{n+2}-F_{n+1} &=&
        \left(F_1 + 
        \frac{1}{\alpha 
        p^2}\vartheta^k Q^*F_{n+1}\right)  -
        \left( F_1 + 
      \frac{1}{\alpha p^2}\vartheta^k
        Q^*F_n\right)\\ & =&  \frac{1}{\alpha 
      p^2}\vartheta^k
        Q^*(F_{n+1}-F_n).
  \end{eqnarray*}
  By Lemma \ref{thetabound2} (with $r=0$) we have
  $$\|F_{n+2} - F_{n+1}\|_{\scr{V}[1,1]} \leq
  \frac{p^{2-k}}{|\alpha|}\|F_{n+1} - F_n\|_{\scr{V}[1,1]}.$$
  The
  hypothesis on $\alpha$ ensures that
  $$\left(\frac{p^{2-k}}{|\alpha|}\right)^n\longrightarrow 0\ \ \ 
  \mbox{as}\ \ \ n\longrightarrow \infty$$
  and hence that the sequence
  has successive differences that tend to zero.  As
  $H^0(\scr{V}[1,1],\OO(k\Sigma_{4Np^{m+1}/\q}))$ is a Banach algebra with
  respect to $\|\!\cdot\!\|_{\scr{V}[1,1]}$ by Lemma
  \ref{banachspaces}, it follows that the sequence converges.  Set $$G
  = \lim_{n\to \infty} F_n|_{\scr{V}[1,1]}.$$
  
  Next we apply Kassaei's gluing lemma (Lemma 2.3 of \cite{kassaei})
  to glue $G$ to $F$ as sections of the line bundle $\OO(\lfloor
  k\Sigma_{4Np^{m+1}/\q}\rfloor)$.  So that we are gluing over an affinoid as
  required in the hypotheses of the gluing lemma, we first restrict
  $F$ to $\scr{V}[p^{-1/p(1+p)},1)$ and glue $G$ to this restriction
  to get a section over the smooth affinoid
  $\scr{V}[p^{-1/p(1+p)},1]$.  Since the pair
  $\{\scr{V}[p^{-1/p(1+p)},1], \scr{U}_{\m-1}\}$ is an admissible cover of
  $X_1(4Np^{m+1}/\q)^\an_K$, this section glues to $F$ to give a global
  section.
  
  The ``auxiliary'' approximating sections that are required in the
  hypotheses of this lemma (denoted $F_n$ in \cite{kassaei}) are the
  $F_n$ introduced above.  So that the $F_n$ live on affinoids (as in
  the hypotheses of the gluing lemma) we simply restrict $F_n$ to
  $\scr{V}[p^{-1/p^{2n}(1+p)},1]$.  The two conditions to be verified
  are
  $$\|F_n-F\|_{\scr{V}[p^{-1/p^{2n}(1+p)},1)}\to 0\ \ \mbox{and}\ \ 
  \|F_n-G\|_{\scr{V}[1,1]}\to 0.$$
  The second of these is simply the
  definition of $G$.  As for the first, it is not even clear that the
  indicated norms are finite (since the norms are over non-affinoids).
  To see that these norms are finite and that the ensuing estimates
  make sense, we must show that $F$ has finite norm over
  $\scr{V}[p^{-1/p^2(1+p)},1)$.  It suffices to show that the norms of
  $F$ over the affinoids $$\scr{V}_n=
  \scr{V}[p^{-1/p^{2n}(1+p)},p^{-1/p^{2n+2}(1+p)}]$$ are uniformly
  bounded for $n\geq 1$.  The key is that the map $Q$ restricts to a
  map $$Q: \scr{V}_n\longrightarrow \scr{V}_{n+1}$$ for each $n\geq
  1$.  Since $F_1$ extends to the affinoid
  $\scr{V}[p^{-1/p^2(1+p)},1]$, its norms over the $\scr{V}_n$ are
  certainly uniformly bounded, say, by $M$.  We have 
  \begin{eqnarray*}
    \|F\|_{\scr{V}_n} & \leq & \max\left( \|F_1\|_{\scr{V}_n},
    \left\| \frac{1}{\alpha p^2}\vartheta^k Q^*F
    \right\|_{\scr{V}_n}\right) \\ &\leq & \max\left( M , 
    \frac{p^2}{|\alpha|}\|\vartheta^k\|_{\scr{V}_n}
    \|Q^*F\|_{\scr{V}_n}\right)  
    \\ &\leq & \max\left( M,
    \frac{p^2}{|\alpha|}\left(p^{\frac{1}{p^{2n-1}(1+p)}-1}
    \right)^k\|Q^*F\|_{\scr{V}_n} \right) \\ &\leq & \max\left( M,
    \frac{p^{2-k}}{|\alpha|}p^{\frac{k}{p^{2n-1}(1+p)}}\|F\|_{\scr{V}_{n-1}}
    \right) 
  \end{eqnarray*}
  Iterating this, we see that $\|F\|_{\scr{V}_n}$ does not exceed the
  maximum of
  $$\max_{0\leq m\leq n-2} \left(M\left(\frac{p^{2-k}}{|\alpha|}\right)^m
    p^{\frac{k}{1+p}\left(\frac{1}{p^{2n-1}}+\cdots
        +\frac{1}{p^{2(n-m)+1}}\right)}\right)$$
  and
  $$\left(\frac{p^{2-k}}{|\alpha|}\right)^{n-1}
  p^{\frac{k}{1+p}\left(\frac{1}{p^{2n-1}}+\cdots
      +\frac{1}{p^3}\right)}\|F\|_{\scr{V}_1}.$$
  The sums in the
  exponents of are geometric and do not exceed $1/(p^3-p)$.  Moreover,
  the hypothesis on $\alpha$ ensures that $p^{2-k}/|\alpha| <1$.  Thus we
  have $$\|F\|_{\scr{V}_n} \leq \max\left(
  Mp^{\frac{k}{1+p}\frac{1}{p^3-p}},
  p^{\frac{k}{1+p}\frac{1}{p^3-p}}\|F\|_{\scr{V}_1} \right), $$ which
  is independent of $n$, as desired.  This ensures that all of the
  norms encountered below are indeed finite.
  
  From the definition of the $F_n$, we have
 \begin{eqnarray*}
    F_{n+1}-F & = & F_1 +
    \frac{1}{\alpha p^2}\vartheta^kQ^*F_n - F \\
    &=& F-\frac{1}{\alpha p^2}\vartheta^k Q^*F + \frac{1}{\alpha
    p^2}\vartheta^k Q^*F_n -F \\
  &=& \frac{1}{\alpha p^2}\vartheta^k Q^*(F_n-F).
  \end{eqnarray*}
  Taking supremum norms over the appropriate admissible opens, we
  see
  \begin{eqnarray*}
    \lefteqn{
      \|F_{n+1}-F\|_{\scr{V}[p^{-1/p^{2n+2}(1+p)},1)}  } &&  \\ & \leq & 
    \frac{p^2}{|\alpha|}
    \|\vartheta\|^k_{\scr{V}[p^{-1/p^{2n+2}(1+p)},1)}
    \|Q^*(F_n-F)\|_{\scr{V}[p^{-1/p^{2n+2}(1+p)},1)} \\ &\leq& 
    \frac{p^2}{|\alpha|}\left(p^{\frac{1}{p^{2n+1}(1+p)}-1}\right)^k
      \|F_n-F\|_{\scr{V}[p^{-1/p^{2n}(1+p)},1)} \\ &=&
      \frac{p^{2-k}}{|\alpha|}p^{\frac{k}{p^{2n+1}(1+p)}}\|F_n -
      F\|_{\scr{V}[p^{-1/p^{2n}(1+p)},1)} 
  \end{eqnarray*}
  Iterating this we find that
  $$\|F_n-F\|_{\scr{V}[p^{-1/p^{2n}(1+p)},1)} \leq
  \left(\frac{p^{2-k}}{|\alpha|}\right)^{n-1}p^{\frac{k}{1+p}\left(
      \frac{1}{p^3}+\frac{1}{p^5}+\cdots+\frac{1}{p^{2n-1}} \right)}
  \|F_1 - F\|_{\scr{V}[p^{-1/p^2(1+p)},1)}.$$
  Again the sum in the
  exponent is less than $1/(p^3-p)$ for all $n$, so the hypothesis on
  $\alpha$ ensures that the above norm tends to zero as $n\to\infty$,
  as desired
\end{proof}

We are now ready to prove the main result, which is a mild
generalization of Theorem \ref{lowslopeisclassical} stated at the
beginning of this section.

\begin{theo}\label{genlowslopeisclassical}
  Let $m$ be a positive integer, let
  $\psi:(\Z/\q p^{m-1}\Z)^\times\longrightarrow K^\times$ be a character, and
  define $\kappa(x) = x^\lambda\psi(x)$.  Let $P(T)\in K[T]$ be a
  monic polynomial all roots of which have valuation less than
  $2\lambda-1$.  If $F\in \widetilde{M}^\dagger_{\kappa}(4N,K)$ and
  $P(U_{p^2})F$ is classical, then $F$ is classical as well.
\end{theo}
\begin{proof}
  Pick $0<r<r_m$ such that $F\in \widetilde{M}_{\kappa}(4N,K,p^{-r})$
  and let let $F'\in \widetilde{M}_{k/2}(4Np^{m+1}/\q,K,p^{-r})$ be
  the form corresponding to $F$ under the isomorphism of Proposition
  \ref{comparisonofdefs}.  We must show that $F'$ is classical in the
  sense that it analytically continues to all of
  $X_1(4Np^{m+1}/\q)^\an_K$.  Note that $P(0)\neq 0$ for such a
  polynomial, so by Proposition \ref{prop:an2.1}, $F'$ analytically
  continues to an element of $H^0(\scr{U}_{\m-1},
  \OO(k\Sigma_{4Np^{m+1}/\q}))$.  Now we proceed by induction on the
  degree $d$ of $P$.  The case $d=1$ is Proposition \ref{prop:an3}.
  Suppose the result holds for some degree $d\geq 1$ and let $P(T)$ be
  a polynomial of degree $d+1$ as above.  We may pass to a finite
  extension and write $$P(T) = (T-\alpha_1)\cdots(T-\alpha_{d+1}).$$
  The condition that $P(U_{p^2})F'$ is classical implies by the
  inductive hypothesis that $(U_{p^2}-\alpha_{d+1})F'$ is classical.
  This implies that $F'$ is classical by the case $d=1$.
\end{proof}

\begin{rema}
  The results of this section likely also follow from the very general
  classicality machinery developed in the recent paper
  \cite{kassaeiclassicality} of Kassaei, though we have not checked
  the details.
\end{rema}

\section{The half-integral weight eigencurve}\label{sec:eigencurve}

To construct our eigencurve, we will use the axiomatic version of
Coleman and Mazur's Hecke algebra construction, as set up by Buzzard
in his paper \cite{buzzardeigenvarieties}.  We briefly recall some
relevant details.

Let us for the moment allow $\scr{W}$ to be any reduced rigid space
over $K$.  Let $\mathbf{T}$ be a set with a distinguished element
$\phi$.  Suppose that, for each admissible affinoid open
$X\subseteq\scr{W}$, we are given a Banach module $M_X$ over $\OO(X)$
satisfying a certain technical hypothesis (called ({\it Pr}) in
\cite{buzzardeigenvarieties}) and a map
\begin{eqnarray*}
\mathbf{T} &\longrightarrow & \End_{\OO(X)}(M_X) \\
t & \longmapsto & t_X
\end{eqnarray*}
whose image consists of commuting endomorphisms and such that $\phi_X$
is compact for each $X$.  Assume that, for admissible affinoids
$X_1\subseteq X_2\subseteq \scr{W}$, we are given a continuous
injective $\OO(X_1)$-linear map $$\alpha_{12}: M_{X_1}\longrightarrow
M_{X_2}\widehat{\otimes}_{\OO(X_2)}\OO(X_1)$$
that is a ``link'' in the
sense of \cite{buzzardeigenvarieties} and such that
$(t_{X_2}\widehat{\otimes} 1)\circ\alpha_{12} = \alpha_{12}\circ
t_{X_1}$.  Assume moreover that, if $X_1\subseteq X_2\subseteq
X_3\subseteq \scr{W}$ are admissible affinoids, then $\alpha_{13} =
\alpha_{23}\circ\alpha_{12}$ with the obvious notation.  Note that the
link condition ensures that the characteristic power series $P_X(T)$
of $\phi_X$ acting on $M_X$ is independent of $X$ in the sense that
the image of $P_{X_2}(T)$ under the natural map
$\OO(X_2)[\![T]\!]\to\OO(X_1)[\![T]\!]$ is $P_{X_1}(T)$ (see
\cite{buzzardeigenvarieties}).

Out of this data, Buzzard constructs rigid analytic spaces $D$ and
$Z$, called the \emph{eigenvariety} and \emph{spectral variety},
respectively, equipped with canonical maps
\begin{equation}\label{eigenspecweight}
D\longrightarrow Z\longrightarrow\scr{W}.
\end{equation}
The points of $D$ parameterize systems of eigenvalues of $\mathbf{T}$
acting on the $\{M_X\}$ for which the eigenvalue of $\phi$ is nonzero,
in a sense that will be made precise in Lemma \ref{eigenpoints}, while
the image of such a point in $Z$ simply records the inverse of the
$\phi$ eigenvalue and a point of $\scr{W}$.  If $\scr{W}$ is
equidimensional of dimension $d$, then the same is true of both of the
spaces $D$ and $Z$.

As the details of this construction will be required in the next
section, we recall them here.  The following is Theorem 4.6 of
\cite{buzzardeigenvarieties}, and is the deepest part of the
construction.
\begin{theo}\label{cover}
  Let $R$ be a reduced affinoid algebra over $K$, let $P(T)$ be a
  Fredholm series over $R$, and let $Z\subset \Sp(R)\times \A^1$
  denote the hypersurface cut out by $P(T)$ equipped with the
  projection $\pi: Z\longrightarrow \Sp(R)$.  Define $\scr{C}(Z)$ to
  be the collection of admissible affinoid opens $Y$ in $Z$ such that
  \begin{itemize}
  \item $Y'=\pi(Y)$ is an admissible affinoid open in $\Sp(R)$,
  \item $\pi: Y\longrightarrow Y'$ is finite, and
  \item there exists $e\in \OO(\pi^{-1}(Y'))$ such that $e^2=e$ and $Y$
  is the zero locus of $e$.
  \end{itemize}
  Then $\scr{C}(Z)$ is an admissible cover of $Z$.
\end{theo}
\noindent We will generally take $Y'$ to be connected in what follows.  This is
not a serious restriction, since $Y$ is the disjoint union of the
parts lying over the various connected components of $Y'$.  We also
remark that the third of the above conditions follows from the first
two (this is observed in \cite{buzzardeigenvarieties} where references
to the proof are supplied).

To construct $D$, first fix an admissible affinoid open
$X\subseteq\scr{W}$.  Let $Z_X$ denote the zero locus of $P_X(T) =
\det(1-\phi_X T\ |\ M_X)$ in $X\times\A^1$ and let $\pi:Z_X\to X$
denote the projection onto the first factor.  Let $Y\in \scr{C}(Z_X)$
and let $Y'=\pi(Y)$ as above and assume that $Y'$ is connected.  We
wish to associate to $Y$ a polynomial factor of $P_{Y'}(T) =
\det(1-(\phi_{X}\widehat{\otimes}1)T\ |\ M_{X}\widehat{\otimes}_{\OO(X)}\OO(Y'))$. 
Since the algebra $\OO(Y)$ is a finite and locally free module over
$\OO(Y')$, we may consider the characteristic polynomial $Q'$ of $T\in
\OO(Y)$.  Since $T$ is a root of its characteristic polynomial, we have
a map
\begin{equation}\label{structureofOY}
\OO(Y')[T]/(Q'(T))\longrightarrow \OO(Y).
\end{equation}
It is shown in Section 5 of \cite{buzzardeigenvarieties} that this map
is surjective and therefore an isomorphism since both sides are
locally free of the same rank.

Now since the natural map
$$\OO(Y')[T]/(Q'(T))\longrightarrow \OO(Y')\{\!\{T\}\!\}/(Q'(T))$$
is an
isomorphism, it follows that $Q'(T)$ divides $P_{Y'}(T)$ in
$\OO(Y')\{\!\{T\}\!\}$.  If $a_0$ is the constant term of $Q'(T)$, then
this divisibility implies that $a_0$ is a unit.  We set $Q(T) =
a_0^{-1}Q'(T)$.  The spectral theory of compact operators on Banach
modules (see Theorem 3.3 of \cite{buzzardeigenvarieties}) furnishes a
unique decomposition
$$M_X\widehat{\otimes}_{\OO(X)}\OO(Y') \cong N\oplus F$$
into closed
$\phi$-invariant $\OO(Y')$-submodules such that $Q^*(\phi)$ is zero on
$N$ and invertible on $F$.  Moreover, $N$ is projective of rank equal
to the degree of $Q$ and the characteristic power series of $\phi$ on
$N$ is $Q(T)$.  The projector
$M_X\widehat{\otimes}_{\OO(X)}\OO(Y')\longrightarrow N$ is in the
closure of $\OO(Y')[\phi]$, so $N$ is stable under all of the
endomorphisms associated to elements of $\mathbf{T}$.  Let
$\mathbf{T}(Y)$ denote the $\OO(Y')$-subalgebra of $\End_{\OO(Y')}(N)$
generated by these endomorphisms.  Then $\mathbf{T}(Y)$ is finite over
$\OO(Y')$ and hence affinoid, so we we may set $D_Y =
\Sp(\mathbf{T}(Y))$.  Because the leading coefficient of $Q$ (= the
constant term of $Q^*$) is a unit there is an isomorphism
\begin{eqnarray*}
  \OO(Y')[T]/(Q(T)) & \longrightarrow & \OO(Y')[S]/(Q^*(S)) \\
  T &\longmapsto & S^{-1}
\end{eqnarray*}
Thus we obtain a canonical map $D_Y\longrightarrow Y$, namely, the one
corresponding to the map $$\OO(Y)\cong \OO(Y')[T]/(Q(T))\cong
\OO(Y')[S]/(Q^*(S)) \stackrel{S\mapsto \phi}{\longrightarrow
}\mathbf{T}(Y)$$
of affinoid algebras.

For general $Y\in \scr{C}(Z_X)$, we define $D_Y$ be the disjoint union
of the affinoids defined above from the various connected components
of $Y'$.  We then glue the affinoids $D_Y$ for $Y\in\scr{C}(Z_X)$ to
obtain a rigid space $D_X$ equipped with maps $$D_X\longrightarrow
Z_X\longrightarrow X.$$
Finally, we vary $X$ and glue the desired
spaces and maps above to obtain the spaces and maps in
(\ref{eigenspecweight}).  This final step is where the links
$\alpha_{ij}$ above come into play.  We refer the reader to
\cite{buzzardeigenvarieties} for further details.

\begin{defi}
  Let $L$ be a complete discretely-valued extension of $K$.  An
  $L$-valued system of eigenvalues of $\mathbf{T}$ acting on
  $\{M_X\}_X$ is a pair $(\kappa,\gamma)$ consisting of a map of sets
  $\gamma:\mathbf{T}\longrightarrow L$ and a point $\kappa\in
  \scr{W}(L)$ such that there exists an affinoid $X\subseteq \scr{W}$
  containing $\kappa$ and a nonzero element $m\in
  M_X\widehat{\otimes}_{\OO(X),\kappa} L$ such that
  $(t_X\widehat{\otimes} 1)m = \gamma(t)m$ for all $t\in \mathbf{T}$.
  Such a system of eigenvalues is called $\phi$-finite if
  $\gamma(\phi)\neq 0$.
\end{defi}
Let $x$ be an $L$-valued point of $D$.  Then $x$ lies over a point in
$\kappa_x\in\scr{W}(L)$ which lies in $X$ for some affinoid $X$, and $x$
moreover lies in $D_Y(L)$ for some $Y\in \scr{C}(Z_X)$.  Thus to $x$
and the choice of $X$ and $Y$ corresponds a map
$\mathbf{T}(Y)\longrightarrow L$, and in particular a map of sets
$\lambda_x:\mathbf{T}\longrightarrow L$. 
In \cite{buzzardeigenvarieties}, Buzzard proves the following
characterization of the points of $D$.
\begin{lemm}\label{eigenpoints}
  The correspondence $x\longmapsto (\kappa_x,\lambda_x)$ is a
  well-defined bijective correspondence between $L$-valued points of
  $D$ and $\phi$-finite $L$-valued systems of eigenvalues of
  $\mathbf{T}$ acting on the $\{M_X\}$.
\end{lemm}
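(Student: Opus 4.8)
Since this lemma is \cite{buzzardeigenvarieties}, the plan is to recall the shape of Buzzard's argument in the present notation. Given an $L$-valued point $x$ of $D$, I would first produce the pair $(\kappa_x,\lambda_x)$: the point $x$ maps to some $\kappa_x\in\scr{W}(L)$, I would choose an admissible affinoid $X\subseteq\scr{W}$ containing $\kappa_x$, so that $x$ lies in $D_X(L)$ and hence in $D_Y(L)$ for some $Y\in\scr{C}(Z_X)$, and then $x$ corresponds to an $\OO(Y')$-algebra homomorphism $\mathbf{T}(Y)\to L$ which I compose with $\mathbf{T}\to\mathbf{T}(Y)$ to get $\lambda_x:\mathbf{T}\to L$. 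The first point to check is that $\lambda_x$ is independent of the choices of $X$ and $Y$; this is exactly the compatibility built into the two layers of gluing (the restriction isomorphisms $D_{Y''}\cong D_Y|_{Y''}$ for $Y''\subseteq Y$ in $\scr{C}(Z_X)$, and the cocycle identity $\alpha_{13}=\alpha_{23}\circ\alpha_{12}$ on the links). For $\phi$-finiteness I would use that the image of $x$ in $Z$ is a pair $(\kappa_x,t)$ with $P_{\kappa_x}(t)=0$, where $P_{\kappa_x}$ is the specialization at $\kappa_x$ of the Fredholm series $P_X(T)$; such a series has constant term $1$, so $t\neq 0$, and by the description of $D_Y\to Y$ recalled just before the lemma we have $\lambda_x(\phi)=t^{-1}\neq 0$.

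Next I would verify that $(\kappa_x,\lambda_x)$ really is a system of eigenvalues in the sense of the definition. Recall that $x$ is a point of $\Sp\mathbf{T}(Y)$ lying over $\kappa_x\in Y'$, that $M_X\widehat{\otimes}_{\OO(X)}\OO(Y')=N\oplus F$ is the spectral decomposition attached to $Y$, and that $\mathbf{T}(Y)\subseteq\End_{\OO(Y')}(N)$ acts faithfully on the finite projective $\OO(Y')$-module $N$. Faithfulness forces $x$ into the support of $N$, so after base change along the map $\OO(Y')\to L$ induced by $\kappa_x$ the fibre $N\otimes_{\OO(Y'),\kappa_x}L$ is nonzero and carries an action of the Artinian $L$-algebra $\mathbf{T}(Y)\otimes_{\OO(Y'),\kappa_x}L$ with $x$ in its support; passing to the local factor at $x$ and taking a vector in its socle produces a nonzero common eigenvector $m$ for $\mathbf{T}$ with eigenvalues $\lambda_x(t)$. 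Since $N$ is a direct summand of $M_X\widehat{\otimes}_{\OO(X)}\OO(Y')$, the vector $m$ lies inside $M_X\widehat{\otimes}_{\OO(X),\kappa_x}L$, which is precisely the witness demanded by the definition.

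For surjectivity, starting from a $\phi$-finite system $(\kappa,\gamma)$ I would pick $X\ni\kappa$ and a nonzero $m\in M_X\widehat{\otimes}_{\OO(X),\kappa}L$ with $(t_X\widehat{\otimes}1)m=\gamma(t)m$ for all $t$, and set $\alpha=\gamma(\phi)^{-1}$, a nonzero element of $L$. Then $m$ is an eigenvector of $\phi_X\widehat{\otimes}1$ with nonzero eigenvalue $\alpha^{-1}$, so $P_\kappa(\alpha)=0$ and $(\kappa,\alpha)\in Z_X(L)$, which lies in some $Y\in\scr{C}(Z_X)$. Because $(\kappa,\alpha)\in Y$, the scalar $\alpha$ is a root of the specialization of $Q$, equivalently $\alpha^{-1}$ is a root of $Q^*$; writing $m=m_N+m_F$ in the base-changed decomposition and applying $Q^*(\phi)$, which annihilates $m$ and is invertible on the $F$-part, forces $m_F=0$, so $m\in N\otimes_{\OO(Y'),\kappa}L$. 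The line $Lm$ is stable under $\mathbf{T}(Y)$, since that algebra is generated over $\OO(Y')$ by the images of $\mathbf{T}$, each of which fixes $m$; this yields an $\OO(Y')$-algebra character $\mathbf{T}(Y)\to L$, i.e.\ a point $x\in D_Y(L)\subseteq D(L)$ with $(\kappa_x,\lambda_x)=(\kappa,\gamma)$. For injectivity, two points $x,x'$ with the same $(\kappa,\gamma)$ both lie over $\kappa$, hence over a common $X$, and map to the same point $(\kappa,\gamma(\phi)^{-1})$ of $Z_X$; using that the $D_Y$ glue compatibly over $Z_X$ I would reduce to $x,x'\in D_Y(L)$ for a single $Y$, where a character of $\mathbf{T}(Y)$ is determined by its restrictions to $\OO(Y')$ (giving $\kappa$) and to $\mathbf{T}$ (giving $\gamma$), whence $x=x'$.

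The step I expect to carry the weight is the bookkeeping of the two gluings — the local gluing of the $D_Y$ over $Z_X$ supplied by Theorem \ref{cover}, and the gluing of the $D_X$ over $\scr{W}$ via the links — since it is exactly this that legitimizes the independence of choices in the first paragraph and the reduction to a single $D_Y$ in the injectivity argument. Once that structure is taken as given, the rest is formal: it rests only on the spectral decomposition $M_X\widehat{\otimes}_{\OO(X)}\OO(Y')=N\oplus F$, the description of $\mathbf{T}(Y)$ as a faithful finite algebra of operators on the projective module $N$, and the fact that the nonzero eigenvalues of a compact operator are the reciprocals of the zeros of its Fredholm series.
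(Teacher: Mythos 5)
The paper does not give its own proof of this lemma: it sets up the correspondence $x\longmapsto(\kappa_x,\lambda_x)$ in the paragraph preceding the statement and cites \cite{buzzardeigenvarieties} for the fact that it is a bijection. Your argument is a correct reconstruction of Buzzard's proof and uses exactly the machinery the paper delegates to (the admissible cover $\scr{C}(Z_X)$ from Theorem \ref{cover}, the spectral decomposition $M_X\widehat{\otimes}_{\OO(X)}\OO(Y')=N\oplus F$ with $\mathbf{T}(Y)$ finite and faithful on the projective module $N$, and the identification of nonzero $\phi$-eigenvalues with reciprocals of zeros of the Fredholm series), so it is consistent with the approach the paper takes by citation.
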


In our case, we let $\scr{W}$ be weight space over $\Q_p$ as in Section
\ref{weightspace}, and let $\mathbf{T}$ be the set of symbols
$$\left\{\!\begin{array}{cc}
\{T_{\ell^2}\}_{\ell \not\ \!| 4Np}\cup\{U_{\ell^2}\}_{\ell|4Np}
\cup \{\ip{d}_{4N}\}_{d\in(\Z/4N\Z)^\times} & p\neq 2 \\
\{T_{\ell^2}\}_{\ell \not\ \!| 4N}\cup\{U_{\ell^2}\}_{\ell|4N}
\cup \{\ip{d}_{N}\}_{d\in(\Z/N\Z)^\times} & p=2 \end{array}\right.$$
For an admissible affinoid open $X\subseteq\scr{W}$ we let $$M_X =
\widetilde{M}_X(4N,\Q_p,p^{-r_n})$$
where $n$ is the smallest positive
integer such that $X\subseteq\scr{W}_n$.  This module is a direct
summand of the $\Q_p$-Banach space
$$\left\{\!\begin{array}{cc} H^0(X_1(4Np)^\an_{\geq
  p^{-r_n}},\OO(\Sigma_{4Np}))\widehat{\otimes}_{\Q_p}\OO(X) & p\neq 2
\\ H^0(X_1(4N)^\an_{\geq
  2^{-r_n}},\OO(\Sigma_{4N}))\widehat{\otimes}_{\Q_p}\OO(X) &
p=2\end{array}\right.$$ and therefore satisfies property ({\it Pr})
since this latter space is potentially orthonormalizable in the
terminology of \cite{buzzardeigenvarieties} by the discussion in
Section 1 of \cite{serrefredholm}.  We take the map
$$\mathbf{T}\longrightarrow
\End_{\OO(X)}(M_X)$$ to be the one sending each symbol to the endomorphism
by that name defined in Section \ref{sec:hecke}.

Let $X_1\subseteq X_2\subseteq\scr{W}$ be admissible affinoids and let
$n_i$ be the smallest positive integer with $X_i\subseteq
\scr{W}_{n_i}$.  Then $n_1\leq n_2$ so that $r_{n_2}\leq r_{n_1}$ and
we have an inclusion
$$\widetilde{M}_{X_1}(4N,\Q_p,p^{-r_{n_1}})\longrightarrow
\widetilde{M}_{X_1}(4N,\Q_p,p^{-r_{n_2}})$$
given by restriction.  We define
the required continuous injection $\alpha_{12}$ via the diagram
$$\xymatrix{\widetilde{M}_{X_1}(4N,\Q_p,p^{-r_{n_1}})\ar[r]
  \ar[dr]_{\alpha_{12}} & \widetilde{M}_{X_1}(4N,\Q_p,p^{-r_{n_2}})
  \\ &
  \widetilde{M}_{X_2}(4N,\Q_p,p^{-r_{n_2}})\widehat{\otimes}_{\OO(X_2)}
  \OO(X_1)\ar[u]_\sim}$$
and note that the required compatibility
condition is satisfied.  To see that these maps are links, choose
numbers $$r_{n_1}=s_0\geq s_1> s_2 > \cdots > s_{k-1} \geq s_k =
r_{n_2}$$
with the property that $p^2s_{i+1} >s_i$ for all $i$.  Then
the map $\alpha_{12}$ factors as the composition the maps
$$\widetilde{M}_{X_1}(4N,\Q_p,p^{-s_i}) \longrightarrow
\widetilde{M}_{X_1}(4N,\Q_p,p^{-s_{i+1}})$$ for $0\leq i\leq k-2$ and the
map $$\widetilde{M}_{X_1}(4N,\Q_p,p^{-s_{k-1}})\longrightarrow
\widetilde{M}_{X_2}(4N,\Q_p,p^{-s_k})\widehat{\otimes}_{\OO(X_2)} \OO(X_1).$$
Each of these maps is easily seen to be a primitive link from the
construction of $U_{p^2}$.

The result is that we obtain rigid analytic spaces $\widetilde{D}$ and
$\widetilde{Z}$ which we call \emph{the half-integral weight
  eigencurve} and \emph{the half-integral weight spectral curve},
respectively, as well as canonical  maps
$$\widetilde{D}\longrightarrow \widetilde{Z} \longrightarrow
\scr{W}.$$
As usual, the tilde serves to distinguish these spaces from
their integral weight counterparts first constructed in level 1 by
Coleman and Mazur and later constructed for general level by Buzzard
in \cite{buzzardeigenvarieties}.

If instead of using the full spaces of forms we use only the cuspidal
subspaces everywhere, then we obtain  cuspidal versions of all of
the above spaces, which we will delineate with a superscript $0$.
Thus we have $\widetilde{D}^0$ and $\widetilde{Z}^0$ with the usual
maps,  and the points of these spaces parameterize systems of eigenvalues
of the Hecke operators acting on the spaces of cusp forms by Lemma
\ref{eigenpoints}.  We remark that there is a commutative diagram
$$\xymatrix{ \widetilde{D}^0\ar[rr]\ar[d] & & \widetilde{D}\ar[d] \\
  \widetilde{Z}^0\ar[dr]\ar[rr] & & \widetilde{Z}\ar[dl] \\ & \scr{W}
  &}$$
where the horizontal maps are injections that identify the
cuspidal spaces on the left with unions of irreducible components of
the spaces on the right.  This is an exercise in the linear algebra
that goes into the construction of these eigenvarieties and basic
facts about irreducible components of rigid spaces found in
\cite{conradirredcpnts}, and is left to the reader.

For $\kappa\in \scr{W}(K)$, let $\widetilde{D}_\kappa$ and
$\widetilde{D}^0_\kappa$ denote the fibers $\widetilde{D}$ and
$\widetilde{D}^0$ over $\kappa$.  The following
theorem summarizes the basic properties of these eigencurves.
\begin{theo}
  Let $\kappa\in \scr{W}(K)$.  For a complete extension $L/K$, the
  correspondence $x\longmapsto \lambda_x$ is a bijection between the
  $L$-valued points of the fiber $\widetilde{D}_\kappa(L)$ and the set
  of finite-slope systems of eigenvalues of the Hecke
  operators and tame diamond operators occurring on the space
  $\widetilde{M}^\dagger_{\kappa}(4N,L)$ of overconvergent forms of
  weight $\kappa$ defined over $L$.  The same statement holds with
  $\widetilde{D}$ replaced by $\widetilde{D}^0$ and
  $\widetilde{M}^\dagger_{\kappa}(4N,L)$ replaced by
  $\widetilde{S}^\dagger_{\kappa}(4N,L)$.
\end{theo}
\begin{proof}
  We prove the statement for the full space of forms.  The proof for cuspidal
  forms is identical.  Fix $\kappa\in \scr{W}(K)$.  Once we establish
  that the $L$-valued systems of eigenvalues of the form
  $(\kappa,\gamma)$ occurring on the $\{M_X\}_X$ as defined above are
  exactly the systems of eigenvalues the Hecke and tame diamond
  operators that occur on $\widetilde{M}_{\kappa}^\dagger(4N,L)$, the
  result is simply Lemma \ref{eigenpoints} ``collated by weight.''  To
  see this one simply notes that, for any $f\in
  \widetilde{M}^\dagger_\kappa(4N,L)$, we have both $f\in
  \widetilde{M}_\kappa(4N,L,p^{-r_n})$ and $\kappa\in \scr{W}_n$ for
  $n$ sufficiently large.  In particular, if $f$ is a nonzero
  eigenform for the Hecke and tame diamond operators, then the system
  of eigenvalues associated to $f$ occurs in the module $M_{\scr{W}_n}$
  for $n$ sufficiently large.
\end{proof}

We remark that the classicality result of Section \ref{classicality}
has the expected consequence that the collection of points of
$\widetilde{D}$ corresponding to systems of eigenvalues occurring on
classical forms is Zariski-dense in $\widetilde{D}$.  This result is
contained in the forthcoming paper \cite{rigidshimura}.

\appendix\label{appendix}

\section{Properties of the stack $X_1(Mp,p^2)$ over $\Z_{(p)}$ \\
 {\rm by  Brian Conrad}}

\maketitle

\newcommand{\mathscr}{\scr}

In this appendix, we establish some geometric properties concerning
the cuspidal locus in compactified moduli spaces for level structures
on elliptic curves.  We are especially interested in the case of
non-\'etale $p$-level structures in characteristic $p$, so it is not
sufficient to cite the work in \cite{dr} (which requires \'etale level
structures in the treatment of moduli problems for generalized
elliptic curves) or \cite{katzmazur} (which works with Drinfeld structures
over arbitrary base schemes but avoids non-smooth generalized elliptic
curves).  The viewpoints of these works were synthesized in the study
of moduli stacks for Drinfeld structures on generalized elliptic
curves in \cite{conradamgec}, and we will use that as our foundation in
what follows.

Motivated by needs in the main text, for a prime $p$ and an integer $M
\ge 4$ not divisible by $p$ we wish to consider the moduli stack
$X_1(Mp^r,p^e)$ over $\Z_{(p)}$ that classifies triples $(E, P, C)$
where $E$ is a generalized elliptic curve over a $\Z_{(p)}$-scheme
$S$, $P \in E^{\rm{sm}}(S)$ is a Drinfeld $\Z/Mp^r\Z$-structure on
$E^{\rm{sm}}$, and $C \subseteq E^{\rm{sm}}$ is a cyclic subgroup with
order $p^e$ such that some reasonable ampleness and compatibility
properties for $P$ and $C$ are satisfied.  (See Definition
\ref{leveldef} for a precise formulation of these additional
properties.)  The relevant case for applications to $p$-adic modular
forms with half-integer weight is $e = 2$, but unfortunately such
moduli stacks were only considered in \cite{conradamgec} when either $r \ge
e$ or $r = 0$.  (This is sufficient for applications to Hecke
operators, and avoids some complications.)  We now need to allow $1
\le r < e$, and the purpose of this appendix is to explain how to
include such $r$ and to record some consequences concerning the cusps
in these cases.  The consequence that is relevant the main text is
Theorem \ref{app2}.  To carry out the proofs in this appendix we
simply have to adapt some proofs in \cite{conradamgec} rather than develop
any essentially new ideas.  For the convenience of the reader we will
usually use the single paper \cite{conradamgec} as a reference, though it
must be stressed that many of the key notions were first introduced in
the earlier work \cite{dr} and \cite{katzmazur}.  In the context of subgroups
of the smooth locus on a generalized elliptic curve, we will refer to
a Drinfeld $\Z/N\Z$-structure (resp. a Drinfeld $\Z/N\Z$-basis) as a
$\Z/N\Z$-structure (resp. $\Z/N\Z$-basis) unless some confusion is
possible.

\subsection{Definitions}

We refer the reader to \cite[\S2.1]{conradamgec} for the definitions of a
generalized elliptic curve $f:E \rightarrow S$ over a scheme $S$ and
of the closed subscheme $S^{\infty} \subseteq S$ that is the ``locus
of degenerate fibers'' for such an object.  (It would be more accurate
to write $S^{\infty,f}$, but the abuse of notation should not cause
confusion.)  Roughly speaking, $E \rightarrow S$ is a proper flat
family of geometrically connected and semistable curves of arithmetic
genus 1 that are either smooth or are so-called N\'eron polygons, and
the relative smooth locus $E^{\rm{sm}}$ is endowed with a commutative
$S$-group structure that extends (necessarily uniquely) to an action
on $E$ such that whenever $E_s$ is a polygon the action of
$E_s^{\rm{sm}}$ on $E_s$ is via rotations of the polygon.  Also,
$S^{\infty}$ is a scheme structure on the set of $s \in S$ such that
$E_s$ is not smooth.  The definition of the degeneracy locus
$S^{\infty}$ (as given in \cite[2.1.8]{conradamgec}) makes sense for any
proper flat and finitely presented map $C \rightarrow S$ with fibers
of pure dimension 1, and if $S'$ is any $S$-scheme and then there is
an inclusion $S' \times_S S^{\infty} \subseteq {S'}^{\infty}$ as
closed subschemes of $S'$ (with ${S'}^{\infty}$ corresponding to the
$S'$-curve $C \times_S S'$), but this inclusion can fail to be an
equality even when each geometric fiber $C_s$ is smooth of genus 1 or
a N\'eron polygon \cite[Ex.~2.1.11]{conradamgec}.  Fortunately, if $C$
admits a structure of generalized elliptic curve over $S$ then this
inclusion is always an equality \cite[2.1.12]{conradamgec}, so the
degeneracy locus makes sense on moduli stacks for generalized elliptic
curves (where it defines the cusps).

We wish to study moduli spaces for generalized elliptic curves
$E_{/S}$ equipped with certain ample level structures defined by
subgroups of $E^{\rm{sm}}$.  Of particular interest are those subgroup
schemes $G \subseteq E^{\rm{sm}}$ that are not only finite locally
free over the base with some constant order $n$ but are even {\em
  cyclic} in the sense that {\em fppf}-locally on the base we can
write $G = \langle P \rangle := \sum_{j \in \Z/n\Z} [jP]$ in
$E^{\rm{sm}}$ as Cartier divisors for some $n$-torsion point $P$ of
$E^{\rm{sm}}$.  By \cite[2.3.5]{conradamgec}, if $P$ and $P'$ are two such
points for the same $G$ then for any $d|n$ the points $(n/d)P$ and
$(n/d)P'$ are $\Z/(n/d)\Z$-generators of the same $S$-subgroup of $G$,
so by descent this naturally defines a cyclic $S$-subgroup $G_d
\subseteq G$ of order $d$ even if $P$ does not exist over the given
base scheme $S$.  We call $G_d$ the {\em standard} cyclic subgroup of
$G$ with order $d$.  For example, if $d = d' d''$ with $d', d'' \ge 1$
and $\gcd(d', d'') = 1$ then $G_{d'} \times G_{d''} \simeq G_d$ via
the group law on $G$.

\begin{defi}\label{leveldef}  Let $N, n \ge 1$ be integers.   
  
  A {\em $\Gamma_1(N)$-structure} on a generalized elliptic curve
  $E_{/S}$ is an $S$-ample $\Z/N\Z$-structure on $E^{\rm{sm}}$, which
  is to say an $N$-torsion point $P \in E^{\rm{sm}}(S)$ such that the
  relative effective Cartier divisor $D = \sum_{j \in \Z/N\Z} [jP]$ on
  $E^{\rm{sm}}$ is an $S$-subgroup and $D_s$ is ample on $E_s$ for all
  $s \in S$.
  
  A {\em $\Gamma_1(N,n)$-structure} on $E_{/S}$ is a pair $(P,C)$
  where $P$ is a $\Z/N\Z$-structure on $E^{\rm{sm}}$ and $C \subseteq
  E^{\rm{sm}}$ is a cyclic $S$-subgroup with order $n$ such that the
  relative effective Cartier divisor $D = \sum_{j \in \Z/N\Z} (jP +
  C)$ on $E$ is $S$-ample and there is an equality of closed
  subschemes
\begin{equation}\label{244}
  \sum_{j \in \Z/p^{e_p} \Z} (j(N/p^{e_p})P + C_{p^{e_p}}) = E^{\rm{sm}}[p^{e_p}]
\end{equation}
for all primes $p|\gcd(N,n)$, with $e_p = \ord_p(\gcd(N,n)) \ge 1$.
\end{defi}

\begin{exem} Obviously a $\Gamma_1(N,1)$-structure is the same thing
  as a $\Gamma_1(N)$-structure.  If $N = 1$ then we refer to
  $\Gamma_1(1)$-structures as $\Gamma(1)$-structures, and such a
  structure on a generalized elliptic curve $E_{/S}$ must be the
  identity section.  Thus, by the ampleness requirement, the geometric
  fibers $E_s$ must be irreducible.  Hence, the moduli stack
  $\mathscr{M}_{\Gamma(1)}$ of $\Gamma(1)$-structures on generalized
  elliptic curves classifies generalized elliptic curves with
  geometrically irreducible fibers.
\end{exem}  

In \cite[2.4.3]{conradamgec} the notion of $\Gamma_1(N,n)$-structure is
defined as above, but with the additional requirement that $\ord_p(n)
\le \ord_p(N)$ for all primes $p|\gcd(N,n)$.  This requirement always
holds when $n = 1$ and whenever it holds the standard subgroup
$C_{p^{e_p}}$ in (\ref{244}) is the $p$-part of $C$, but it turns out
to be unnecessary for the proofs of the basic properties of
$\Gamma_1(N,n)$-structures and their moduli, as we shall explain in
\S\ref{mstack}.  For example, the proof of \cite[2.4.4]{conradamgec}
carries over to show that we can replace (\ref{244}) with the
requirement that
$$\sum_{j \in \Z/d\Z} (j(N/d)P + C_d) = E^{\rm{sm}}[d]$$
in $E$ for $d
= \gcd(N,n)$.  Another basic property that carries over to the general
case is that if $(P,C)$ is a $\Gamma_1(N,n)$-structure on $E$ then the
relative effective Cartier divisor $\sum_{j \in \Z/N\Z} (jP + C)$ on
$E^{\rm{sm}}$ is an $S$-subgroup; the proof is given in
\cite[2.4.5]{conradamgec} under the assumption $\ord_p(n) \le \ord_p(N)$
for every prime $p|\gcd(N,n)$, but the argument works in general once
it is observed that after making an {\em fppf} base change to acquire
a $\Z/n\Z$-generator $Q$ of $C$ we can use symmetry in $P$ and $Q$ in
the rest of the argument so as to reduce to the case considered in
\cite{conradamgec}.

\subsection{Moduli stacks}\label{mstack}

As in \cite[2.4.6]{conradamgec}, for $N, n \ge 1$ we define the moduli
stack $\mathscr{M}_{\Gamma_1(N,n)}$ to classify
$\Gamma_1(N,n)$-structures on generalized elliptic curves over
arbitrary schemes, and we let $\mathscr{M}_{\Gamma_1(N,n)}^{\infty}
\hookrightarrow \mathscr{M}_{\Gamma_1(N,n)}$ denote the closed
substack given by the degeneracy locus for the universal generalized
elliptic curve.  The arguments in \cite[\S3.1--\S3.2]{conradamgec} carry
over verbatim (i.e., without using the condition $\ord_p(n) \le
\ord_p(N)$ for all primes $p|\gcd(N,n)$) to prove the following
result.

\begin{theo}\label{327}  The stack $\mathscr{M}_{\Gamma_1(N,n)}$ is an
  Artin stack that is proper over $\Z$.  It is smooth over $\Z[1/Nn]$,
  and it is Deligne--Mumford away from the open and closed substack in
  $\mathscr{M}_{\Gamma_1(N,n)}^{\infty}$ classifying degenerate
  triples $(E,P,C)$ in positive characteristics $p$ such that the
  $p$-part of each geometric fiber of $C$ is non-\'etale and
  disconnected.
\end{theo}

The proof of \cite[3.3.4]{conradamgec} does not use the condition
$\ord_p(n) \le \ord_p(N)$ for all primes $p|\gcd(N,n)$ (although this
condition is mentioned in the proof), so that argument gives:

\begin{lemm}\label{334} The open substack $\mathscr{M}_{\Gamma_1(N,n)}^0 =
  \mathscr{M}_{\Gamma_1(N,n)} - \mathscr{M}^{\infty}_{\Gamma_1(N,n)}$
  classifying elliptic curves endowed with a $\Gamma_1(N,n)$-structure
  is regular and $\Z$-flat with pure relative dimension $1$.
\end{lemm}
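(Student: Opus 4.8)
We would follow the proof of \cite[3.3.4]{conradamgec} --- the analogue for generalized elliptic curves of the Katz--Mazur regularity theorems for moduli of Drinfeld level structures \cite{katzmazur} --- and verify that the hypothesis $\ord_\ell(n)\le\ord_\ell(N)$ for $\ell\mid\gcd(N,n)$ plays no role once the reformulation of the compatibility relation recorded above (via the proof of \cite[2.4.4]{conradamgec}) is in force. We would dispatch $\Z$-flatness and pure relative dimension $1$ first, as they are soft. The forgetful morphism from $\mathscr{M}_{\Gamma_1(N,n)}$ to the proper stack of all generalized elliptic curves is proper --- by cancellation, the target being $\Z$-separated and the source $\Z$-proper by Theorem \ref{327} --- and quasi-finite, hence finite; since the preimage of the elliptic locus is exactly $\mathscr{M}_{\Gamma_1(N,n)}^0$, its base change $q\colon\mathscr{M}_{\Gamma_1(N,n)}^0\to\mathscr{M}_{\Gamma_1(1)}^0$ to the stack of elliptic curves is finite. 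On the elliptic locus every nonzero effective divisor is ample, so there $\mathscr{M}_{\Gamma_1(N,n)}^0$ is the locus in the fibre product over $\mathscr{M}_{\Gamma_1(1)}^0$ of the moduli of Drinfeld $\Z/N\Z$-structures and the moduli of cyclic subgroups of order $n$ --- each finite flat over $\mathscr{M}_{\Gamma_1(1)}^0$ by \cite{katzmazur} --- cut out by (\ref{244}), and this last is an open condition since it amounts to requiring that certain torsion sections form a Drinfeld basis of $E^{\rm{sm}}[d]$ with $d=\gcd(N,n)$; combined with the finiteness of $q$ this realizes $\mathscr{M}_{\Gamma_1(N,n)}^0$ as a union of connected components of that fibre product, hence as finite flat over $\mathscr{M}_{\Gamma_1(1)}^0$. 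Since $\mathscr{M}_{\Gamma_1(1)}^0$ is smooth over $\Z$ of pure relative dimension $1$, so is $\mathscr{M}_{\Gamma_1(N,n)}^0$; none of this used the hypothesis relating $\ord_\ell(n)$ and $\ord_\ell(N)$.

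The substance is the regularity of $\mathscr{M}_{\Gamma_1(N,n)}^0$, which we would check on an \'etale scheme presentation, at closed points. Away from the primes dividing $Nn$ the stack is smooth over $\Z$ by Theorem \ref{327}, hence regular there, so we fix $p\mid Nn$, work over $\Z_{(p)}$, and set $a=\ord_p(N)$, $b=\ord_p(n)$. The relations (\ref{244}) at distinct primes are independent, and those at $\ell\ne p$ involve only prime-to-$p$ torsion (\'etale in characteristic $p$), so $\mathscr{M}_{\Gamma_1(N,n)}^0$ is finite \'etale over $\mathscr{M}_{\Gamma_1(p^a,p^b)}^0$ (the base change along $\mathscr{M}_{\Gamma_1(p^a,p^b)}^0\to\mathscr{M}_{\Gamma_1(1)}^0$ of the finite \'etale morphism $\mathscr{M}_{\Gamma_1(N/p^a,\,n/p^b)}^0\to\mathscr{M}_{\Gamma_1(1)}^0$), and a finite \'etale cover of a regular stack is regular; thus it suffices to treat $\mathscr{M}_{\Gamma_1(p^a,p^b)}^0$ over $\Z_{(p)}$ (and there is nothing to do if $a=0$ or $b=0$). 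At a closed point $(E_0/k,(\overline P,\overline C))$ in characteristic $p$ the complete local ring of $\mathscr{M}_{\Gamma_1(1)}^0$ is the regular two-dimensional deformation ring $W(k)[[t]]$, over which the $p$-divisible group of the universal curve has dimension $1$ (it is $\widehat E$ in the supersingular case and, \'etale-locally, an extension of an \'etale group by $\widehat E$ in the ordinary case); a $\Gamma_1(p^a,p^b)$-structure on the universal curve is precisely a Drinfeld $\Z/p^a\Z$-structure together with a cyclic subgroup of order $p^b$, compatible as in (\ref{244}), on this one-dimensional object. The explicit analysis of \cite[\S5, \S10]{katzmazur} --- transported to generalized elliptic curves in \cite[\S3.2]{conradamgec}, and resting only on the fact that multiplication by $p$ on a one-dimensional formal group is, modulo the maximal ideal, a unit times a power of Frobenius --- then shows that this complete local ring is regular of dimension $2$.

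The one point demanding care is that no step above uses $b\le a$. In \cite{conradamgec} the hypothesis $\ord_p(n)\le\ord_p(N)$ enters only to arrange that the standard subgroup occurring in (\ref{244}) is the entire $p$-part of $C$; as recorded above (following the proofs of \cite[2.4.4, 2.4.5]{conradamgec}) one may instead use the single relation $\sum_{j\in\Z/d\Z}(j(N/d)P+C_d)=E^{\rm{sm}}[d]$ with $d=\gcd(N,n)$, and with this in hand the representability, finiteness and flatness statements, as well as the local regularity computation --- which concerns only the one-dimensional $p$-divisible group $E[p^\infty]$ and the adjunction to it of a Drinfeld point of level $p^a$ and a cyclic subgroup of order $p^b$ --- go through unchanged, the relation between $a$ and $b$ affecting only the combinatorics of ranks and of connected components. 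The main obstacle is therefore bookkeeping rather than mathematics: we must revisit the steps of \cite[\S3]{conradamgec}, in particular the ordinary case and the case $p=2$ where the Katz--Mazur ``crossings'' analysis is most delicate, and confirm that each invocation of $\ord_\ell(n)\le\ord_\ell(N)$ is of this cosmetic kind, so that $\mathscr{M}_{\Gamma_1(p^a,p^b)}^0$ --- and hence $\mathscr{M}_{\Gamma_1(N,n)}^0$ --- is regular with no constraint relating $a$ and $b$.
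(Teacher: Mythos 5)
Your proposal is essentially the paper's approach: the paper's proof of Lemma \ref{334} is a single sentence observing that the argument for \cite[3.3.4]{conradamgec} never actually uses the condition $\ord_p(n)\le\ord_p(N)$, and your detailed reconstruction (finite flat over $\mathscr{M}_{\Gamma(1)}^0$ for flatness and dimension, Katz--Mazur-style deformation theory of the one-dimensional $p$-divisible group for regularity, reduction to the $p$-primary case) is precisely a gloss on what that cited proof does. One small caveat worth flagging: your parenthetical claim that (\ref{244}) is an ``open condition'' on the fibre product is not immediate --- an equality of finite flat closed subschemes of the same degree is a priori a closed condition, and what one actually shows (as in Katz--Mazur and \cite{conradamgec}) is that the resulting moduli problem is itself finite flat over $\mathscr{M}_{\Gamma(1)}^0$ by a direct deformation computation rather than by cutting an open-and-closed locus out of a fibre product; but since you ultimately defer to revisiting \cite[\S3]{conradamgec} for the actual verification, this does not change the overall assessment.
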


We are interested in the structure of $\mathscr{M}_{\Gamma_1(N,n)}$
around its cuspidal substack, especially determining whether it is
regular or a scheme near such points.  Our analysis of
$\mathscr{M}_{\Gamma_1(N,n)}^{\infty}$ rests on the following theorem.

\begin{theo}\label{331}
  The map $\mathscr{M}_{\Gamma_1(N,n)} \rightarrow \Spec(\Z)$ is flat
  and Cohen-Macaulay with pure relative dimension $1$.
\end{theo}

\begin{proof}   By Lemma \ref{334}, we just have to work along the cusps.
  Also, it suffices to check the result after localization at each
  prime $p$, and if $p \nmid \gcd(N,n)$ or $1 \le \ord_p(n) \le
  \ord_p(N)$ then \cite[3.3.1]{conradamgec} gives the result over
  $\Z_{(p)}$.  It therefore remains to study the cusps in positive
  characteristic $p$ when $1 \le \ord_p(N) < \ord_p(n)$.  As in the
  cases treated in \cite{conradamgec}, the key is to study the deformation
  theory of a related level structure on generalized elliptic curves
  called a {\em $\widetilde{\Gamma}_1(N,n)$-structure}: this is a pair
  $(P,Q)$ where $P$ is a $\Z/N\Z$-structure on the smooth locus and
  $Q$ is a $\Z/n\Z$-structure on the smooth locus such that $(P,
  \langle Q \rangle)$ is a $\Gamma_1(N,n)$-structure.  The same
  definition is given in \cite[3.3.2]{conradamgec} with the unnecessary
  restriction $\ord_p(n) \le \ord_p(N)$ for all primes $p|\gcd(N,n)$,
  and the argument in \cite{conradamgec} immediately following that
  definition works without such a restriction to show that the moduli
  stack $\mathscr{M}_{\widetilde{\Gamma}_1(N,n)}$ of
  $\widetilde{\Gamma}_1(N,n)$-structures is a Deligne--Mumford stack
  over $\Z$ that is a finite flat cover of the proper Artin stack
  $\mathscr{M}_{\Gamma_1(N,n)}$.
  
  By the Deligne--Mumford property, any
  $\widetilde{\Gamma}_1(N,n)$-structure $x_0 = (E_0, P_0, Q_0)$ over
  an algebraically closed field $k$ admits a universal deformation
  ring. Since $\mathscr{M}_{\widetilde{\Gamma}_1(N,n)}$ is a finite
  flat cover of $\mathscr{M}_{\Gamma_1(N,n)}$, as in the proof of
  \cite[3.3.1]{conradamgec} it suffices to assume ${\rm{char}}(k) = p > 0$
  and to exhibit the deformation ring at $x_0$ as a finite flat
  extension of $W(k)[\![x]\!]$ when $E_0$ is a standard polygon, $n =
  p^e$, and $N = Mp^r$ with $p \nmid M$ and $e, r \ge 1$.  The case $e
  \le r$ was settled in \cite{conradamgec}, and we will adapt that argument
  to handle the case $1 \le r < e$.  By the ampleness condition at
  least one of $MP_0$ or $Q_0$ generates the $p$-part of the component
  group of $E_0^{\rm{sm}}$, and moreover $\{MP_0, p^{e-r}Q_0\}$ is a
  Drinfeld $\Z/p^r\Z$-basis of $E^{\rm{sm}}_0[p^r]$.  We shall break
  up the problem into three cases, and it is only in Case 3 that we
  will meet a situation essentially different from that encountered in
  the proof for $1 \le e \le r$ in \cite{conradamgec}.
  
  {\sc Case 1}: We first assume that $MP_0$ generates the $p$-part of
  the component group, so by the Drinfeld $\Z/p^r\Z$-basis hypothesis
  this point is a basis of $E^{\rm{sm}}_0(k)[p^{\infty}]$ over
  $\Z/p^r\Z$ (as we are in characteristic $p$ and $E_0$ is a polygon).
  Hence, $Q_0 = j MP_0$ for a unique $j \in \Z/p^r\Z$ (so $p^{e-r}Q_0
  = p^{e-r}jMP_0$).  Since $n$ is a $p$-power, it also follows that
  $\langle P_0 \rangle$ is ample.  In particular, $(E_0,P_0)$ is a
  $\Gamma_1(N)$-structure.  Thus, the formation of an infinitesimal
  deformation $(E,P,Q)$ of $(E_0,P_0,Q_0)$ can be given in three
  steps: first give an infinitesimal deformation $(E,P)$ of
  $(E_0,P_0)$ as a $\Gamma_1(N)$-structure, then give a Drinfeld
  $\Z/p^r\Z$-basis $(MP,Q')$ of $E^{\rm{sm}}[p^r]$ with $Q'$ deforming
  $p^{e-r}Q_0$, and finally specify a $p^{e-r}$th root $Q$ of $Q'$
  lifting $Q_0 = jMP_0$.  The one aspect of this description that
  merits some explanation is to justify that such a $p^{e-r}$th root
  $Q$ of $Q'$ must be a $\Z/p^e\Z$-structure on $E^{\rm{sm}}$.  The
  point $Q$ is clearly killed by $p^e$, so the Cartier divisor $D =
  \sum_{j \in \Z/p^e\Z} [jQ]$ in $E^{\rm{sm}}$ makes sense and we have
  to check that it is automatically a subgroup scheme.
  
  The identification $(E^{\rm{sm}}_0)^0[p^t] = \mu_{p^t}$ uniquely
  lifts to an isomorphism $(E^{\rm{sm}})^0[p^t] \simeq \mu_{p^t}$ for
  any $t \ge 0$.  In particular, if $p^{\nu}$ is the order of the
  $p$-part of the cyclic component group of $E_0^{\rm{sm}}$ (with $\nu
  \ge r$) then $E^{\rm{sm}}[p^e]$ is an extension of $\Z/p^j\Z$ by
  $\mu_{p^e}$ where $j = \min(\nu,e)$.  The image of $\langle Q_0
  \rangle$ in the component group can be uniquely identified with
  $\Z/p^i\Z$ (for some $i \le j$) such that $Q_0 \mapsto 1$, and this
  $\Z/p^i\Z$ has preimage $G$ in $E^{\rm{sm}}[p^e]$ that is a
  $p^e$-torsion commutative extension of $\Z/p^i\Z$ by $\mu_{p^e}$
  with $0 \le i \le e$.  Since $Q$ is a point of $G$ over the (artin
  local) base, it follows from \cite[2.3.3]{conradamgec} that $Q$ is a
  $\Z/p^e\Z$-structure on $E^{\rm{sm}}$ if and only if the point $p^i
  Q$ in $\mu_{p^{e-i}}$ is a $\Z/p^{e-i}\Z$-generator of
  $\mu_{p^{e-i}}$.  The case $i = e$ is therefore settled, so we can
  assume $i < e$ (i.e., $\langle Q_0 \rangle$ is not \'etale, or
  equivalently $p^{e-1}Q_0 = 0$).  By hypothesis $p^{e-r}Q = Q'$ is a
  $\Z/p^r\Z$-structure on $E^{\rm{sm}}$ with $1 \le r < e$, so
  $p^{e-1}Q = p^{r-1}Q'$ is a $\Z/p\Z$-structure on $E^{\rm{sm}}$.
  This $\Z/p\Z$-structure must generate the subgroup $\mu_p \subseteq
  E^{\rm{sm}}[p^e]$ since $p^{e-1}Q$ lies in $(E^{\rm{sm}})^0$ (as
  $p^{e-1}Q_0 = 0$).  Hence, $Q'' = p^iQ$ is a point of
  $\mu_{p^{e-i}}$ such that $p^{e-i-1}Q''$ is a $\Z/p\Z$-generator of
  $\mu_p$.  Since $\Z/m\Z$-generators of $\mu_m$ are simply roots of
  the cyclotomic polynomial $\Phi_m$ \cite[1.12.9]{katzmazur}, our problem is
  reduced to the assertion that if $s$ is a positive integer (such as
  $e-i$) then an element $\zeta$ in a ring is a root of the cyclotomic
  polynomial $\Phi_{p^s}$ if $\zeta^{p^{s-1}}$ is a root of $\Phi_p$.
  This assertion is obvious since $\Phi_{p^s}(T) =
  \Phi_p(T^{p^{s-1}})$, and so our description of the infinitesimal
  deformation theory of $(E_0, P_0, Q_0)$ is justified.
  
  The torsion subgroup $E^{\rm{sm}}[p^r]$ is uniquely an extension of
  $\Z/p^r\Z$ by $\mu_{p^r}$ deforming the canonical such description
  for $E_0^{\rm{sm}}[p^r]$, so the condition on $Q'$ is that it has
  the form $\zeta + p^{e-r} j MP$ for a point $\zeta$ of the scheme of
  generators $\mu_{p^r}^{\times}$ of $\mu_{p^r} =
  (E^{\rm{sm}})^0[p^r]$.  Thus, to give $Q$ is to specify a
  $p^{e-r}$th root of $\zeta$ in $E^{\rm{sm}}$ deforming the identity,
  which is to say a point of $\mu_{p^e}^{\times}$.  It is shown in the
  proof of \cite[3.3.1]{conradamgec} that the universal deformation ring
  $A$ for $(E_0,P_0)$ is finite flat over $W(k)[\![x]\!]$, and the
  specification of $\zeta$ amounts to giving a root of the cyclotomic
  polynomial $\Phi_{p^e}$, so the case when $MP_0$ generates the
  $p$-part of the component group of $E_0^{\rm{sm}}$ is settled (with
  deformation ring $A[T]/(\Phi_{p^e}(T))$).
  
  {\sc Case 2}: Next assume that $Q_0$ generates the $p$-part of the
  component group and that $\langle Q_0 \rangle$ is \'etale (i.e.,
  $Q_0 \in E_0^{\rm{sm}}(k)$ has order $p^e$).  The point $Q_0$ must
  generate $E_0^{\rm{sm}}(k)[p^{\infty}]$ over $\Z/p^e\Z$, and the
  \'etale hypothesis ensures that $Q_0$ is a $\Z/p^e\Z$-basis of
  $E_0^{\rm{sm}}(k)[p^{\infty}]$.  Thus, $MP_0 = p^{e-r} j Q_0$ for
  some (unique) $j \in \Z/p^r\Z$.  By replacing $P$ with $P - M^{-1}
  p^{e-r} j Q$ for any infinitesimal deformation $(E,P,Q)$ of $(E_0,
  P_0, Q_0)$ we can assume that the $p$-part of $P_0$ vanishes.  The
  $p$-part of $P$ must therefore be a point of $\mu_{p^r}^{\times}$.
  The $\Z/M\Z$-part of $P$ together with $Q$ constitutes a
  $\Gamma_1(Mp^e)$-structure on $E$ (in particular, the ampleness
  condition holds), and this is an \'etale level structure since the
  cyclic subgroup $\langle Q_0 \rangle$ in $E_0^{\rm{sm}}$ is \'etale.
  Hence, the infinitesimal deformation functor of $(E_0, P_0, Q_0)$ is
  pro-represented by $\mu_{p^r}^{\times}$ over the deformation ring of
  an \'etale $\Gamma_1(Mp^e)$-structure.  For any $R \ge 1$,
  deformation rings for \'etale $\Gamma_1(R)$-structures on polygons
  over $k$ have the form $W(k)[\![x]\!]$ (as is explained near the end
  of the proof of \cite[3.3.1]{conradamgec}, using \cite[II,~1.17]{dr}), so
  not only are we done but in this case the deformation ring for
  $(E_0, P_0, Q_0)$ is the ring $W(k)[\![x]\!][T]/(\Phi_{p^r}(T))$
  that is visibly regular.
  
  {\sc Case 3}: Finally, assume $Q_0$ generates the $p$-part of the
  component group but that $\langle Q_0 \rangle$ is not \'etale (i.e.,
  $Q_0 \in E_0^{\rm{sm}}(k)$ has order strictly less than $p^e$), so
  $p^{e-r}Q_0 \in E_0^{\rm{sm}}(k)$ has order strictly dividing $p^r$.
  Since $\{MP_0, p^{e-r}Q_0\}$ is a Drinfeld $\Z/p^r\Z$-basis of
  $E_0^{\rm{sm}}[p^r]$, the point $MP_0$ must be a $\Z/p^r\Z$-basis
  for $E_0^{\rm{sm}}(k)[p^r]$.  Hence, if we write $P_0 = P'_0 +
  P''_0$ corresponding to the decomposition $\Z/N\Z = (\Z/M\Z) \times
  (\Z/p^r\Z)$ then $P''_0$ has order exactly $p^r$ in
  $E_0^{\rm{sm}}(k)$.  We use $P''_0$ to identify
  $E_0^{\rm{sm}}(k)[p^r]$ with $\Z/p^r\Z$.  It follows that if we make
  the analogous canonical decomposition $P = P' + P''$ for an
  infinitesimal deformation $(E, P, Q)$ of $(E_0, P_0, Q_0)$ then the
  $p$-part $P''$ deforms $P''_0$ and generates an \'etale subgroup of
  $E^{\rm{sm}}$ with order $p^r$.  Thus, $P'$ and $Q$ together
  constitute a (non-\'etale) $\Gamma_1(Mp^e)$-structure on $E$ (in
  particular, the ampleness condition holds), and the data of $P''$
  amounts to a section over $1 \in \Z/p^r\Z$ with respect to the
  unique quotient map $E^{\rm{sm}}[p^r] \twoheadrightarrow \Z/p^r\Z$
  lifting the quotient map $E_0^{\rm{sm}}[p^r] \twoheadrightarrow
  \Z/p^r\Z$ defined by $P''_0$.  Since the specification of a
  $\Z/N\Z$-structure on $E^{\rm{sm}}$ is the ``same'' as the
  specification of a pair consisting of $\Z/M\Z$-structure and a
  $\Z/p^r\Z$-structure \cite[1.7.3]{katzmazur}, we conclude that the
  universal deformation ring of $(E_0, P_0, Q_0)$ classifies the fiber
  over $1 \in \Z/p^r\Z$ in the connected-\'etale sequence for the
  $p^r$-torsion in infinitesimal deformations of the underlying
  $\Gamma_1(Mp^e)$-structure $(E_0, P'_0, Q_0)$.  Universal
  deformation rings for $\Gamma_1(Mp^e)$-structures over $k$ are
  finite flat over $W(k)[\![x]\!]$ (by the proof of
  \cite[3.3.1]{conradamgec}), so we are therefore done.
\end{proof}

\begin{coro}\label{411}  
  The closed substack $\mathscr{M}_{\Gamma_1(N,n)}^{\infty}
  \hookrightarrow \mathscr{M}_{\Gamma_1(N,n)}$ is a relative effective
  Cartier divisor over $\Z$, and it has a reduced generic fiber over
  $\Q$.
\end{coro}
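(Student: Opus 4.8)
The plan is to reduce the statement to the deformation-theoretic computations already carried out in the proof of Theorem \ref{331}. Being a relative effective Cartier divisor over $\Z$ means that the ideal sheaf of $\mathscr{M}^\infty_{\Gamma_1(N,n)}$ in $\mathscr{M}_{\Gamma_1(N,n)}$ is invertible and that $\mathscr{M}^\infty_{\Gamma_1(N,n)}$ is flat over $\Z$; this, and the reducedness of the generic fibre over $\Q$, can all be checked \'etale-locally and indeed on complete local rings at geometric points of $\mathscr{M}^\infty_{\Gamma_1(N,n)}$. First I would pass to the finite flat cover $\mathscr{M}_{\widetilde{\Gamma}_1(N,n)} \to \mathscr{M}_{\Gamma_1(N,n)}$ of the proof of Theorem \ref{331}: this cover is faithfully flat, and it pulls the cuspidal substack back to the cuspidal substack, since forgetting the chosen generator of $C$ does not change the underlying generalized elliptic curve. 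Because invertibility of a coherent ideal sheaf and flatness over $\Z$ both descend along faithfully flat morphisms, and because reducedness descends along a faithfully flat ring map (such a map being injective), it is enough to prove the corollary after replacing $\mathscr{M}_{\Gamma_1(N,n)}$ by the Deligne--Mumford stack $\mathscr{M}_{\widetilde{\Gamma}_1(N,n)}$.

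On $\mathscr{M}_{\widetilde{\Gamma}_1(N,n)}$ the complete local ring $R$ at a cuspidal geometric point is the universal deformation ring of the corresponding degenerate triple, and there the cuspidal substack is cut out by the Tate parameter $x$ of the underlying polygon. In characteristic $p$, the proof of Theorem \ref{331} exhibits $R$ as a \emph{finite flat} extension of $W(k)[\![x]\!]$; since $x$ is a non-zero-divisor of $W(k)[\![x]\!]$ and $R$ is $W(k)[\![x]\!]$-flat, multiplication by $x$ is injective on $R$, so $V(x)$ is an effective Cartier divisor in $\Spec R$, and $R/xR$ is finite flat over $W(k)$ (hence flat over $\Z$), making it a \emph{relative} effective Cartier divisor over $\Z$. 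Over $\Z[1/Nn]$ --- in particular over $\Q$ --- the same conclusion is classical (\cite{dr}, \cite{conradamgec}): there $\mathscr{M}_{\widetilde{\Gamma}_1(N,n)}$ is smooth over the base by Theorem \ref{327}, the deformation ring is regular with $x$ part of a regular system of parameters, and $V(x)$ is again a relative effective Cartier divisor whose fibre over a $\Q$-point is regular, hence reduced. Assembling these local statements --- and recording via Lemma \ref{334} that the open complement $\mathscr{M}^0_{\Gamma_1(N,n)}$ is $\Z$-flat of pure relative dimension $1$, so that the cuspidal substack is genuinely a divisor --- yields both assertions on $\mathscr{M}_{\widetilde{\Gamma}_1(N,n)}$, and then on $\mathscr{M}_{\Gamma_1(N,n)}$ by descent.

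I expect the only real obstacle to be that $\mathscr{M}_{\Gamma_1(N,n)}$ need not be Deligne--Mumford in characteristic $p \mid \gcd(N,n)$ when the $p$-part of $C$ is non-\'etale and disconnected (Theorem \ref{327}), so that one cannot speak directly of a universal deformation ring at such a cusp. The passage to $\mathscr{M}_{\widetilde{\Gamma}_1(N,n)}$ together with faithfully flat descent is precisely the device that circumvents this, exactly as in the proof of Theorem \ref{331}; once that reduction is in place the argument is routine bookkeeping with the deformation rings produced there.
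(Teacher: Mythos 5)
Your argument is correct and follows essentially the same route as the paper: the paper's proof is a short citation to \cite[4.1.1(1), 4.3.2]{conradamgec} together with the remark that Conrad's methods carry over once Theorem~\ref{331} is available, and what you have written is precisely a reconstruction of that cited argument---pass to the finite flat Deligne--Mumford cover $\mathscr{M}_{\widetilde{\Gamma}_1(N,n)}$, use the deformation rings exhibited in the proof of Theorem~\ref{331} as finite flat over $W(k)[\![x]\!]$ so that the degeneracy parameter is a non-zero-divisor with $\Z$-flat quotient, and descend by faithful flatness. The one step you assert rather than prove---that the cuspidal locus is cut out by the image of $x$ in the local deformation ring---is exactly the content the paper is borrowing from \cite{conradamgec}, so this is the expected level of citation rather than a gap.
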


\begin{proof}  The reducedness over $\Q$ is shown in \cite[4.3.2]{conradamgec},
  and the proof works without restriction on $\gcd(N,n)$.  Likewise,
  the proof that $\mathscr{M}_{\Gamma_1(N,n)}^{\infty}$ is a $\Z$-flat
  Cartier divisor is part of \cite[4.1.1(1)]{conradamgec} in case
  $\ord_p(n) \le \ord_p(N)$ for all primes $p|\gcd(N,n)$, but by using
  the above proof of Theorem \ref{331} we see that the method of proof
  works in general.
\end{proof}

Using Lemma \ref{334}, Theorem \ref{331}, and Corollary \ref{411},
Serre's normality criterion can be used to prove normality for
$\mathscr{M}_{\Gamma_1(N,n)}$ in general.  (This is proved in
\cite[4.1.4]{conradamgec} subject to the restrictions on $\gcd(N,n)$ in the
definition of $\Gamma_1(N,n)$-structures in \cite{conradamgec}, but the
argument works in general by using the results that are stated above
without any such restriction on $\gcd(N,n)$.)  However, the proof of
regularity encounters complications at points of a certain locus of
cusps in bad characteristics. This problematic locus is defined as
follows.

\begin{defi}  Let $\mathscr{Z}_{\Gamma_1(N,n)} \hookrightarrow
  \mathscr{M}_{\Gamma_1(N,n)}^{ \infty}$ be the 0-dimensional closed
  substack with reduced structure consisting of geometric points
  $(E_0, P_0, C_0)$ in characteristics $p|\gcd(N,n)$ such that $1 \le
  \ord_p(N) < \ord_p(n)$, $C_0$ is not \'etale, and
  $(N/p^{\ord_p(N)})P_0$ does not generate the $p$-part of the
  component group of $E_0^{\rm{sm}}$.
\end{defi}

Note that if $\ord_p(n) \le \ord_p(N)$ for all primes $p|\gcd(N,n)$
(the situation considered in \cite{conradamgec}) then
$\mathscr{Z}_{\Gamma_1(N,n)}$ is empty; this includes the case of
$\Gamma_1(N)$-structures for any $N$ (take $n = 1$).  In all other
cases it is non-empty.  The geometric points of
$\mathscr{Z}_{\Gamma_1(N,n)}$ correspond to precisely the points in
Case 3 in the proof of Theorem \ref{331}.  The method in \cite{conradamgec}
for analyzing regularity along the cusps assumes
$\mathscr{Z}_{\Gamma_1(N,n)}$ is empty, and by combining it with the
modified arguments in the proof of Theorem \ref{331} (especially the
regularity observation in Case 2) we obtain the following consequence.

\begin{theo}\label{remreg}
  The stack $\mathscr{M}_{\Gamma_1(N,n)}$ is regular outside of the
  closed substack $\mathscr{Z}_{\Gamma_1(N,n)} \subseteq
  \mathscr{M}_{\Gamma_1(N,n)}^{\infty}$.
\end{theo}

\subsection{Applications}\label{appsec}

Before we apply the preceding results, we record a useful lemma.

\begin{lemm}\label{scheme} Let $S$ be a scheme
  and let $\mathscr{X}$ be an Artin stack over $S$.  Assume
  $\mathscr{X}$ is $S$-separated.  The locus of geometric points of
  $\mathscr{X}$ with trivial automorphism group scheme is an open
  substack $\mathscr{U} \subseteq \mathscr{X}$ that is an algebraic
  space.  This algebraic space is a scheme if $\mathscr{X}$ is
  quasi-finite over a separated $S$-scheme.
\end{lemm}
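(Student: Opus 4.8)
The plan is to reduce the whole statement to properties of the inertia stack $\mathscr{I}_{\mathscr{X}} \to \mathscr{X}$ (equivalently, after choosing a smooth presentation $U \to \mathscr{X}$ by a scheme, of the stabilizer group algebraic space $G \to U$ of the groupoid $U \times_{\mathscr{X}} U \rightrightarrows U$). Since $\mathscr{X}$ is $S$-separated, the diagonal $\Delta_{\mathscr{X}/S}$ is proper, and $\mathscr{I}_{\mathscr{X}} \to \mathscr{X}$ is its base change along $\Delta_{\mathscr{X}/S}$, hence proper; in particular it is a proper, finitely presented group algebraic space over $\mathscr{X}$, representable by algebraic spaces. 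Note first that a geometric point with a positive-dimensional automorphism group scheme cannot lie in $\mathscr{U}$, so by upper semicontinuity of fibre dimension (Chevalley) applied to $x \mapsto \dim_{e(x)} \mathscr{I}_{\mathscr{X},x}$ — obtained by pulling back the fibre-dimension function along the unit section $e$, and using that the fibres of $\mathscr{I}_{\mathscr{X}}$ are equidimensional group algebraic spaces, and that the diagonal of an Artin stack is locally of finite type — the locus $\mathscr{X}_{0} \subseteq \mathscr{X}$ where $\mathscr{I}_{\mathscr{X}}$ has $0$-dimensional (equivalently, since the fibres are proper, finite) fibres is open, and $\mathscr{U} \subseteq \mathscr{X}_{0}$.

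Over $\mathscr{X}_0$ the morphism $\mathscr{I}_{\mathscr{X}} \to \mathscr{X}_{0}$ is proper, representable, and quasi-finite, hence finite, so it is $\Spec$ of a finite (in particular coherent) quasi-coherent $\mathscr{O}_{\mathscr{X}_0}$-algebra $\mathscr{A}$, and the co-unit of the group structure splits off the structure map: $\mathscr{A} = \mathscr{O}_{\mathscr{X}_0} \oplus \mathscr{J}$ with $\mathscr{J}$ the augmentation ideal, a coherent $\mathscr{O}_{\mathscr{X}_0}$-module. A geometric fibre of $\mathscr{I}_{\mathscr{X}}$ is trivial as a group scheme if and only if the corresponding fibre of $\mathscr{A}$ is one-dimensional, i.e. if and only if $\mathscr{J}$ vanishes at that point. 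Since $x \mapsto \dim_{\kappa(x)} \mathscr{J} \otimes \kappa(x)$ is upper semicontinuous, the locus where this fibre vanishes is open; this locus is exactly $\mathscr{U}$, which is therefore an open substack of $\mathscr{X}$. Moreover, over $\mathscr{U}$ the coherent module $\mathscr{J}$ vanishes at every point, so $\mathscr{J} = 0$ by Nakayama and $\mathscr{I}_{\mathscr{U}} \to \mathscr{U}$ is an isomorphism; an algebraic stack whose inertia is trivial (equivalently, whose diagonal is a monomorphism) is an algebraic space, so $\mathscr{U}$ is an algebraic space.

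It remains to treat scheme-ness. Suppose $\mathscr{X}$ is quasi-finite over a separated $S$-scheme $Y$. Since $Y$ is $S$-separated, $\Delta_{\mathscr{X}/S}$ factors as $\Delta_{\mathscr{X}/Y}$ followed by the (locally closed) immersion $\mathscr{X} \times_Y \mathscr{X} \hookrightarrow \mathscr{X} \times_S \mathscr{X}$; as $\Delta_{\mathscr{X}/S}$ is proper and immersions are separated, the cancellation property forces $\Delta_{\mathscr{X}/Y}$ to be proper, i.e. $\mathscr{X} \to Y$ is separated. Composing with the open immersion $\mathscr{U} \hookrightarrow \mathscr{X}$ we find that $\mathscr{U} \to Y$ is a separated, locally quasi-finite morphism from an algebraic space to a scheme, hence representable by schemes by the version of Zariski's main theorem for algebraic spaces, and therefore $\mathscr{U}$ is a scheme.

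The step I expect to require the most care is the openness of $\mathscr{U}$: one must be sure that ``trivial automorphism group scheme at geometric points'' is faithfully captured by the two semicontinuity inputs above, which hinges on the reduction — via properness of the inertia, itself the one essential use of the $S$-separatedness hypothesis — to a finite, and hence coherent, situation where Nakayama and fibrewise-rank semicontinuity apply. The facts that trivial inertia implies algebraic space, and that a separated, locally quasi-finite morphism of algebraic spaces over a scheme is a scheme, are standard results in the theory of algebraic spaces and would simply be cited.
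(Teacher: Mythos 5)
Your argument is correct, and it reconstructs from first principles what the paper's proof (Conrad's) dispatches in one line by citation: the first assertion is cited to his earlier paper (\cite{conradamgec}, 2.2.5(2)), and the second to Laumon--Moret-Bailly (\cite{lm}, Thm.\ A.2). Your inertia-stack argument is the standard one behind the first citation: properness of $\mathscr{I}_{\mathscr{X}} \to \mathscr{X}$ from $S$-separatedness, Chevalley semicontinuity on fibre dimension at the unit section to cut down to finite inertia, then upper semicontinuity of fibre rank of the augmentation ideal plus Nakayama to identify $\mathscr{U}$ and trivialize inertia over it, giving an algebraic space. For the scheme-ness claim you correctly note that $\mathscr{U} \to Y$ inherits separatedness (by cancellation using that $\Delta_{Y/S}$ is a closed immersion) and local quasi-finiteness, and then invoke the Knutson/Laumon--Moret-Bailly theorem that a separated, locally quasi-finite algebraic space over a scheme is a scheme --- which is exactly the second citation. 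So the two routes are the same in content; the paper trades your self-contained argument for brevity, since both ingredients are genuinely standard and already in the literature. The one place to be a touch careful, which you handle implicitly and correctly, is that \emph{trivial automorphism group scheme} must be read scheme-theoretically (so that, e.g., $\alpha_p$ is excluded), and your criterion via vanishing of the augmentation ideal's fibre is exactly the right one; the fibre-dimension step uses, and you may as well say so explicitly, that a finite-type group scheme over a field is equidimensional, so the dimension at the identity computes the fibre dimension.
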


\begin{proof}
  The first part is \cite[2.2.5(2)]{conradamgec}, and the second part
  follows from the general fact that an algebraic space that is
  quasi-finite and separated over a scheme is a scheme
  \cite[Thm.~A.2]{lm}.
\end{proof}

In the setting of Lemma \ref{scheme}, if $\mathscr{X}$ is quasi-finite
over a separated $S$-scheme then we call $\mathscr{U}$ the {\em
  maximal open subscheme} of $\mathscr{X}$.  The case of interest to
us is $\mathscr{X} = \mathscr{M}_{\Gamma_1(N,n)/S}$ over any scheme
$S$.  This is quasi-finite over the $S$-proper stack
$\mathscr{M}_{\Gamma(1)/S}$ via fibral contraction away from the
identity component, and $\mathscr{M}_{\Gamma(1)/S}$ is quasi-finite
over $\mathbf{P}^1_S$ via the $j$-invariant, so $\mathscr{X}$ is
quasi-finite over the separated $S$-scheme $\mathbf{P}^1_S$.

We wish to prove results concerning when certain components of
$\mathscr{M}_{\Gamma_1(N,n)}^{\infty}$ lie in the maximal open
subscheme of $\mathscr{M}_{\Gamma_1(N,n)}$.  To this end, we first
record a general lemma.

\begin{lemm}\label{multfiber} Let $\mathscr{Y}$ be an irreducible
  Artin stack over $\F_p$, and 
  let $\mathscr{C}$ be a finite locally free commutative
  $\mathscr{Y}$-group that is cyclic with order $p^e$.  If
  $\mathscr{C}$ has a multiplicative geometric fiber over
  $\mathscr{Y}$ then all of its geometric fibers are connected.
\end{lemm}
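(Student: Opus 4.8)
The plan is to show that the set $Z$ of geometric points $y$ of $\mathscr{Y}$ at which the fibre $\mathscr{C}_y$ is connected is simultaneously closed and dense, so that $Z=\mathscr{Y}$ by irreducibility of $\mathscr{Y}$; this is precisely the assertion.

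First I would dispose of a purely fibrewise fact: a finite commutative group scheme of $p$-power order over a field $k$ of characteristic $p$ that is of multiplicative type is connected. Indeed, its maximal étale quotient is again of multiplicative type and of $p$-power order, while over $\bar k$ a diagonalizable group of $p$-power order is a product of groups $\mu_{p^{a}}$, all connected in characteristic $p$; hence that étale quotient is trivial. In particular the hypothesis supplies a geometric point $y_0$ with $\mathscr{C}_{y_0}$ of multiplicative type, hence $y_0\in Z$. Next, the locus $U\subseteq\mathscr{Y}$ of geometric points $y$ at which $\mathscr{C}_y$ is of multiplicative type is open: by Cartier duality it is the locus over which the Cartier dual $\mathscr{C}^{\vee}$ (again finite locally free over $\mathscr{Y}$, hence flat and of finite presentation) is étale, and the étale locus of such a morphism is open. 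Since $y_0\in U$, the set $U$ is non-empty, hence dense because $\mathscr{Y}$ is irreducible; and by the fibrewise fact $U\subseteq Z$.

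It remains to prove that $Z$ is closed; together with $U\subseteq Z$ and the density of $U$ this forces $Z=\mathscr{Y}$. The route I would take uses the relative Frobenius, available because everything lives over $\F_p$. One first checks that for a finite locally free group scheme of order $p^e$ over a field of characteristic $p$, connectedness is equivalent to being killed by the $e$-fold relative Frobenius $F^e$: if $F^e$ kills the group its étale part is trivial, and conversely a nontrivial connected finite group scheme has nontrivial $F$-kernel, so the chain $0\subseteq\mathscr{C}_y[F]\subseteq\mathscr{C}_y[F^2]\subseteq\cdots$ strictly increases — each strict step multiplying the order by at least $p$ — until it reaches $\mathscr{C}_y$, necessarily within $e$ steps. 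Now set $\mathscr{C}' := \mathscr{C}[F^e] = (F^e_{\mathscr{C}/\mathscr{Y}})^{-1}(e(\mathscr{Y}))$, a closed subscheme of $\mathscr{C}$ finite over $\mathscr{Y}$; since the relative Frobenius commutes with base change, $\mathscr{C}'_y = \mathscr{C}_y[F^e]$. Pushing forward along the finite map $\pi:\mathscr{C}\to\mathscr{Y}$, the coherent sheaf $\pi_*\mathscr{O}_{\mathscr{C}'}$ is a quotient of the locally free rank-$p^e$ sheaf $\pi_*\mathscr{O}_{\mathscr{C}}$, so its fibre rank at $y$ is at most $p^e$, with equality exactly when $\mathscr{C}'_y=\mathscr{C}_y$, i.e. when $\mathscr{C}_y$ is connected. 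Hence $Z$ is the locus where this fibre rank attains its maximal value $p^e$, which is closed by upper semicontinuity of fibre ranks of a coherent sheaf. (Alternatively one may simply invoke the standard upper semicontinuity of the rank of the identity component in the fibres of a finite locally free group scheme.)

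The main obstacle is this last step: one must present ``$\mathscr{C}_y$ is connected'' as a closed condition in a way that is robust over an Artin stack, and the delicate point is that ``killed by $F^e$'' is a priori strictly stronger than connectedness — it is the order-$p^e$ hypothesis that makes the two coincide, and getting the Frobenius-kernel bookkeeping exactly right, including its compatibility with base change so that fibres are computed correctly, is where the care is needed. Everything else — Cartier duality, openness of the étale locus, density from irreducibility — is routine.
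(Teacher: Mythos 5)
Your proof is correct, and it takes a genuinely different route from the paper's.  The paper first uses cyclicity to reduce to the standard subgroup of order $p$ (a cyclic $p$-power group over an algebraically closed field of characteristic $p$ is \'etale iff its standard order-$p$ subgroup is), and then argues by contradiction: if some geometric fibre were \'etale, openness of the \'etale locus plus irreducibility of $\scr{Y}$ would produce a dense open over which $\scr{C}$ is \'etale; specializing along a valuation ring to the assumed multiplicative point and passing to Cartier duals yields a multiplicative group of order $p$ specializing to an \'etale one, which is impossible (again by openness of the \'etale locus applied over the valuation ring).  You instead exhibit the connected locus directly as a closed set containing a dense set: closedness via the fact that in characteristic $p$ connectedness of a finite group scheme of order $p^e$ is equivalent to being killed by $F^e$, so it is the locus of maximal fibre rank of the coherent sheaf $\pi_*\OO_{\scr{C}[F^e]}$ (upper semicontinuity); density because the multiplicative-type locus is nonempty and open (by Cartier duality and openness of the \'etale locus) and sits inside the connected locus.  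Both arguments hinge on openness of the \'etale locus and Cartier duality, but the paper trades the Frobenius-kernel bookkeeping for a cyclicity reduction to order $p$ and a valuation-ring specialization, whereas your route avoids the cyclicity reduction entirely --- and thus establishes the statement for an arbitrary finite locally free commutative group of $p$-power order, not just a cyclic one --- at the cost of carrying out the $F^e$-kernel and semicontinuity argument over the Artin stack.
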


The abstract notion of cyclicity (with no ambient smooth curve group)
is developed in \cite[1.5,~1.9,~1.10]{katzmazur} over arbitrary base schemes,
and the theory carries over when the base is an Artin stack.  We will
only need the lemma for situations that arise within torsion on
generalized elliptic curves (over Artin stacks).

\begin{proof}
  We can assume $e \ge 1$, and we may replace $\mathscr{C}$ with its
  standard subgroup $\mathscr{C}_p$ of order $p$ because it is obvious
  by group theory that a {\em cyclic} group scheme $C$ of $p$-power
  order over an algebraically closed field of characteristic $p$ is
  \'etale if and only if its standard subgroup of order $p$ is
  \'etale.  Hence, we can assume that $\mathscr{C}$ has order $p$.
  Our problem is therefore to rule out the existence of \'etale
  fibers.  By openness of the locus of \'etale fibers and
  irreducibility of $\mathscr{Y}$, if there is an \'etale fiber then
  there is a Zariski-dense open $\mathscr{U} \subseteq \mathscr{Y}$
  over which $\mathscr{C}$ has \'etale fibers.  In particular, there
  is some geometric point $u$ of $\mathscr{U}$ that specializes to the
  geometric point $y \in \mathscr{Y}$ where we assume the fiber is
  multiplicative, so after pullback to a suitable valuation ring we
  get an \'etale group of order $p$ in characteristic $p$ specializing
  to a multiplicative one.  Passing to Cartier duals gives a
  multiplicative group of order $p$ having an \'etale specialization,
  and this is impossible since multiplicative groups of order $p$ in
  characteristic $p$ are not \'etale.
\end{proof}

\begin{theo}\label{app2}
  Let $p$ be a prime, and choose a positive integer $M$ not divisible
  by $p$ such that $M > 2$.  Also fix integers $e, r \ge 0$.  If $e =
  0$ or $r = 0$ then assume $M \ne 4$.  Let $x_0 = (E_0, P_0, C_0)$ be
  a geometric point on the special fiber of the cuspidal substack in
  the proper Artin stack $\mathscr{X} =
  \mathscr{M}_{\Gamma_1(Mp^r,p^e)/\Z_{(p)}}$ over $\Z_{(p)}$, and
  assume that $C_0$ is \'etale.
  
  Let $\mathscr{Y}$ be the irreducible component of $x_0$ in
  $\mathscr{X}_{\F_p}$.  For every geometric cusp $x_1 = (E_1, P_1,
  C_1)$ on $\mathscr{Y}$ the group $C_1$ is \'etale and $x_1$ lies in
  the maximal open subscheme of $\mathscr{X}$.  Moreover, if $x \in
  \mathscr{X}_{\Q}$ is a cusp specializing into $\mathscr{Y}$ then the
  Zariski closure $D$ of $x$ in $\mathscr{X}$ lies in the maximal open
  subscheme and $D$ is Cartier in $\mathscr{X}$.
\end{theo}

The case $e = 2$ is required in the main text.  It is necessary to
avoid the cases $M \le 2$ and $(M,r) = (4,0)$ because in these cases
there are cusps $x_0$ in characteristic $p$ as in the theorem such
that $x_0$ admits nontrivial automorphisms (and so $x_0$ cannot lie in
the maximal open subscheme of $\mathscr{X}$).

\begin{proof}
  We first check that the \'etale assumption at $x_0$ is inherited by
  all geometric cusps $x_1 \in \mathscr{Y}$.  Let
  $(\mathscr{E},\mathscr{P},\mathscr{C})$ be the pullback to
  $\mathscr{Y}$ of the universal family over $\mathscr{X}$.  The group
  $\mathscr{C}$ is cyclic of order $p^e$ with $e \ge 0$, so applying
  Lemma \ref{multfiber} to its Cartier dual gives the result (since at
  a cusp a connected subgroup of $p$-power order must be
  multiplicative).
  
  Now we can rename $x_1$ as $x_0$ without loss of generality, so we
  have to check that $x_0$ lies in the maximal open subscheme of
  $\mathscr{X}$ and that if $x \in \mathscr{X}_{\Q}$ is a geometric
  cusp specializing to $x_0$ then the Zariski closure of $x$ in
  $\mathscr{X}$ is Cartier.  But the \'etale hypothesis on $C_0$
  ensures that $x_0$ is not in the closed substack
  $\mathscr{Z}_{\Gamma_1(Mp^r,p^e)/\Z_{(p)}}$, so by Theorem
  \ref{remreg} the stack $\mathscr{X}$ is regular at $x_0$.  Hence,
  since $\mathscr{X}$ is $\Z_{(p)}$-flat with pure relative dimension
  1 (by Theorem \ref{331}), the desired properties of $D$ at the end
  of the theorem hold once we know that $x_0$ is in the maximal open
  subscheme of $\mathscr{X}$, which is to say that its automorphism
  group scheme $G$ is trivial.  To verify this triviality we will make
  essential use of the property that $C_0$ is \'etale.  Let $k$ be the
  algebraically closed field over which $x_0$ lives.  Since $E_0$ is
  $d$-gon over $k$ for some $d \ge 1$, $G$ is a closed subgroup of the
  automorphism group $\mu_d \rtimes \langle {\rm{inv}} \rangle$ of the
  $d$-gon.  Since $C_0$ is \'etale with order $p^e$ in characteristic
  $p$ it follows that $C_0$ maps isomorphically into the $p$-part of
  the component group of $E_0^{\rm{sm}} = \mathbf{G}_m \times
  (\Z/d\Z)$.  (In particular, $p^e|d$.)  If $R$ is an artin local
  $k$-algebra with residue field $k$ then any choice of generator
  $Q_0$ of $C_0$ must be carried to another generator of $C_0$ by any
  $g \in G(R)$ since $C_0(R) \rightarrow C_0(k)$ is a bijection.  But
  $\mu_d(R)$ acts on $(E_0)_R$ in a manner that preserves the
  components of the smooth locus, and $C_0$ meets each component of
  $E_0^{\rm{sm}}$ in at most one point.  Hence, $G \cap \mu_d$ acts as
  automorphisms of the $\Gamma_1(Mp^e)$-structure on $E_0$ defined by
  $p^rP_0$ and $Q_0$.  Since $Mp^e > 2$ and $Mp^e \ne 4$ (due to the
  cases we are avoiding), such an ample level structure on a $d$-gon
  has trivial automorphism group scheme.  This shows that $G \cap
  \mu_d$ is trivial, so $G$ injects into the group $\Z/2\Z$ of
  automorphisms of the identity component $\mathbf{G}_m$ of
  $E_0^{\rm{sm}}$.  Hence, the contraction operation on $E_0$ away
  from $\langle P_0 \rangle$ is faithful on $G$ since contraction does
  not affect the identity component.  It follows that $G$ is a
  subgroup of the automorphism group of the $\Gamma_1(Mp^r)$-structure
  obtained by contraction away from $\langle P_0 \rangle$.  But $Mp^r
  \not\in \{1,2,4\}$ since we assume $M > 2$ and $(M,r) \ne (4,0)$, so
  $\Gamma_1(Mp^r)$-structures on polygons have trivial automorphism
  functor.  Thus, $G = \{1\}$ as desired.
\end{proof}

We remark that, over the base $\Z_{(p)}$, the results of \S3-4 of
\cite{conradamgec} concerning the properties of the stack $X_1(N,n)$
carry over if $p\nmid n$.  In effect, the hypothesis on $\ord_p(n)$
imposed in \cite{conradamgec} only intervenes in the proofs when $n$
is not invertible on the base.

\bibliographystyle{smfplain}

\begin{thebibliography}{10}
\bibitem{bgr}
{\scshape S.~Bosch, U.~G{\"u}ntzer {\normalfont \smfandname} R.~Remmert} --
  \emph{Non-{A}rchimedean analysis}, Grundlehren der Mathematischen
  Wissenschaften [Fundamental Principles of Mathematical Sciences], vol. 261,
  Springer-Verlag, Berlin, 1984, A systematic approach to rigid analytic
  geometry.

\bibitem{formalrigid}
{\scshape S.~Bosch {\normalfont \smfandname} W.~L{\"u}tkebohmert} -- {\og
  Formal and rigid geometry. {I}. {R}igid spaces\fg}, \emph{Math. Ann.}
  \textbf{295} (1993), no.~2, p.~291--317.



\bibitem{buzzard} K. Buzzard -- {\og Analytic continuation of
  overconvergent eigenforms\fg}, \emph{J.  Amer. Math. Soc.}
  \textbf{16} (2003), no.~1, p.~29--55 (electronic).

\bibitem{buzzardeigenvarieties} \bysame , {\og Eigenvarieties\fg}, in
  \emph{{$L$}-functions and {G}alois representations}, London
  Math. Soc. Lecture Note Ser., vol. 320, Cambridge Univ. Press,
  Cambridge, 2007, p.~59--120.


\bibitem{colmaz} {\scshape R.~Coleman {\normalfont \smfandname}
  B.~Mazur} -- {\og The eigencurve\fg}, in \emph{Galois
  representations in arithmetic algebraic geometry (Durham, 1996)},
  London Math. Soc. Lecture Note Ser., vol. 254, Cambridge
  Univ. Press, Cambridge, 1998, p.~1--113.

\bibitem{coleman}
{\scshape R.~F. Coleman} -- {\og {$p$}-adic {B}anach spaces and families of
  modular forms\fg}, \emph{Invent. Math.} \textbf{127} (1997), no.~3,
  p.~417--479.

\bibitem{conradirredcpnts}
{\scshape B.~Conrad} -- {\og Irreducible components of rigid spaces\fg},
  \emph{Ann. Inst. Fourier (Grenoble)} \textbf{49} (1999), no.~2, p.~473--541.

\bibitem{conradamgec}
\bysame , {\og Arithmetic moduli of generalized elliptic curves\fg}, \emph{J.                       
  Inst. Math. Jussieu} \textbf{6} (2007), no.~2, p.~209--278.

\bibitem{dr}
{\scshape P.~Deligne {\normalfont \smfandname} M.~Rapoport} -- {\og Les
  sch\'emas de modules de courbes elliptiques\fg}, in \emph{Modular functions                       
  of one variable, II (Proc. Internat. Summer School, Univ. Antwerp, Antwerp,
  1972)}, Springer, Berlin, 1973, p.~143--316. Lecture Notes in Math., Vol.
  349.

\bibitem{gorenkassaei}
{\scshape E.~Z. Goren {\normalfont \smfandname} P.~L. Kassaei} -- {\og The
  canonical subgroup: a ``subgroup-free'' approach\fg}, \emph{Comment. Math.                        
  Helv.} \textbf{81} (2006), no.~3, p.~617--641.

\bibitem{dejong}
{\scshape A.~J. de~Jong} -- {\og Crystalline {D}ieudonn\'e module theory via
  formal and rigid geometry\fg}, \emph{Inst. Hautes \'Etudes Sci. Publ. Math.}
  (1995), no.~82, p.~5--96 (1996).

\bibitem{kassaeiclassicality}
{\scshape P.~L. Kassaei} -- {\og Overconvergence, analytic continuation, and
  classicality: the case of curves\fg}, \emph{To appear in Crelle's Journal}.

\bibitem{kassaei}
\bysame , {\og A gluing lemma and overconvergent modular forms\fg}, \emph{Duke                      
  Math. J.} \textbf{132} (2006), no.~3, p.~509--529.

\bibitem{katz}
{\scshape N.~M. Katz} -- {\og {$p$}-adic properties of modular schemes and
  modular forms\fg}, in \emph{Modular functions of one variable, III (Proc.                         
  Internat. Summer School, Univ. Antwerp, Antwerp, 1972)}, Springer, Berlin,
  1973, p.~69--190. Lecture Notes in Mathematics, Vol. 350.

\bibitem{katzmazur}
{\scshape N.~M. Katz {\normalfont \smfandname} B.~Mazur} -- \emph{Arithmetic                        
  moduli of elliptic curves}, Annals of Mathematics Studies, vol. 108,
  Princeton University Press, Princeton, NJ, 1985.

\bibitem{lm}
{\scshape G.~Laumon {\normalfont \smfandname} L.~Moret-Bailly} -- \emph{Champs                      
  alg\'ebriques}, Ergebnisse der Mathematik und ihrer Grenzgebiete. 3. Folge. A
  Series of Modern Surveys in Mathematics [Results in Mathematics and Related
  Areas. 3rd Series. A Series of Modern Surveys in Mathematics], vol.~39,
  Springer-Verlag, Berlin, 2000.

\bibitem{mfhi}
{\scshape N.~Ramsey} -- {\og Geometric and {$p$}-adic modular forms of
  half-integral weight\fg}, \emph{Ann. Inst. Fourier (Grenoble)} \textbf{56}
  (2006), no.~3, p.~599--624.

\bibitem{rigidshimura}
\bysame , {\og The overconvergent {S}himura lifting\fg}, \emph{Int. Math. Res.  
  Not. IMRN} (2009), no.~2, p.~193--220.

\bibitem{serrefredholm}
{\scshape J.-P. Serre} -- {\og Endomorphismes compl\`etement continus des
  espaces de {B}anach {$p$}-adiques\fg}, \emph{Inst. Hautes \'Etudes Sci. Publ.                     
  Math.} (1962), no.~12, p.~69--85.

\bibitem{shimura}
{\scshape G.~Shimura} -- {\og On modular forms of half integral weight\fg},
  \emph{Ann. of Math. (2)} \textbf{97} (1973), p.~440--481.


\end{thebibliography}

\end{document}